 \let\mathscr\relax
\numberwithin{equation}{section}
\pgfplotsset{compat=1.14}
\theoremstyle{plain} 
\newtheorem{thm}{Theorem}[section]
\numberwithin{thm}{section}
\newtheorem{cor}[thm]{Corollary}
\newtheorem{prop}[thm]{Proposition}
\theoremstyle{definition}
\newtheorem{defn}[thm]{Definition}
\newtheorem{exmp}[thm]{Example}
\theoremstyle{remark}
\newtheorem{rem}[thm]{Remark}
\newcommand{\NN}{{\mathbb N}}
\newcommand{\ZZ}{{\mathbb Z}}
\newcommand{\ffD}{{\mathfrak D}}
\newcommand{\fF}{{\mathfrak{F}}}
\newcommand{\sA}{{\mathcal A}}
\newcommand{\sB}{{\mathcal B}}
\newcommand{\sC}{{\mathcal C}}
\newcommand{\sN}{{\mathcal N}}
\newcommand{\sP}{{\mathcal P}}
\newcommand{\sS}{{\mathcal S}}
\newcommand{\sW}{{\mathcal W}}
\newcommand{\sY}{{\mathcal Y}}
\newcommand{\msrA}{{\mathcal{A}}}
\newcommand{\bw}{{\bf w}}
\newcommand{\bx}{{\bf x}}
\newcommand{\by}{{\bf y}}
\newcommand{\bz}{{\bf z}}
\newcommand{\sX}{{X}}
\newcommand{\pr}{{\circledast}}
\newcommand{\prn}{{\circledast_n}}
\newcommand{\topp}{{\rm top}}
\newcommand{\lcm}{{\rm lcm}}
\newcommand{\IC}{{\mathcal{N}}}
\newcommand{\oX}{{\widetilde{X}}} 
\newcommand{\oz}{{\tilde{\bf{z}}}} 
\newcommand{\AP}{{\rm AP}} 
\newcommand{\self}{{\rm self}}
\numberwithin{equation}{section}
\title{ Decimation and Interleaving Operations in One-Sided Symbolic Dynamics}
\author{William C.  Abram}
\address{Allen Bailey \& Associates,   
Austin, TX 78731}
\email{abramwc@gmail.com}
\author{ Jeffrey C. Lagarias}
\thanks{The research of the second author was supported by NSF grant
DMS-1701229, and a 2018 Simons Fellowship in Mathematics .}
\address{Department of Mathematics, University of Michigan, 
Ann Arbor, MI 48109-1043,USA}
\email{lagarias@umich.edu}
\author{Daniel J. Slonim}
\address{Department of Mathematics, Purdue University, 
West Lafayette, IN 47906}
\email{dslonim@purdue.edu}
\date{November 9,  2020}
\begin{document}

\begin{abstract}

This paper studies subsets  of one-sided shift spaces 
on a  finite alphabet. 
Such subsets arise in symbolic dynamics, in fractal constructions, and in number theory.
We study a family of decimation operations, which extract
 subsequences of  symbol sequences in infinite arithmetic progressions, and show  
 they  are closed under composition. 
We also study a family of $n$-ary interleaving operations, one  for each $n \ge 1$.
Given subsets $X_0, X_1, ..., X_{n-1}$ of the shift space,  the $n$-ary interleaving operator
produces a set whose elements 
combine individual  elements $\bx_i$, one from each $X_i$, by interleaving their symbol sequences  cyclically
in arithmetic progressions $(\bmod\,n)$. 
We determine algebraic relations between decimation
and interleaving operators and the shift operator. 
We study set-theoretic $n$-fold closure operations $X \mapsto X^{[n]}$,
which interleave decimations of $X$ of modulus level  $n$.  
A set is $n$-factorizable if  $X=X^{[n]}$. The $n$-fold interleaving  operators are
 closed under composition and are idempotent. 
To each $X$ we assign the set $\sN(X)$
of all values $n \ge 1$ for which $X= X^{[n]}$. We characterize the
possible sets  $\sN(X)$ 
as  nonempty sets of positive integers that 
 form a distributive lattice under the divisibility partial order and are downward closed under divisibility.
 We show that all  sets of this type occur. 
 We introduce a class of weakly shift-stable sets and show that this class is closed under
all decimation,  interleaving, and shift operations. 
We study  two  notions of entropy for subsets of the full one-sided shift and show that they coincide for 
weakly shift-stable $X$, but can be different in general.
 We give  a formula  for  entropy of interleavings of  weakly shift-stable sets
in terms of individual entropies.  
\end{abstract}

\keywords{symbolic dynamics, entropy, infinite words, nonsymmetric operad}
\subjclass[2020]{Primary 37B10; Secondary  11B25, 18M65, 37B40.}
\maketitle

\tableofcontents

\section{Introduction}\label{sec:1}

Let $\mathcal{A}$ be a finite alphabet of symbols, and suppose $|\sA| \ge 2$.  A basic object in  one-sided symbolic dynamics
is the    full one-sided shift  space $\mathcal{A}^\mathbb{N}$,  
which is  the space of 
all one-sided infinite strings of symbols drawn from $\sA$.
 Here $\NN=\{0, 1, 2,\ldots\}$ denotes the natural numbers, and  $\NN^{+}= \NN \smallsetminus \{0\}$ denotes
 the positive integers.  We view $\sA^{\NN}= \prod_{j \in \NN} \sA$ as a compact topological space carrying the product
topology, with each copy of $\sA$ carrying the discrete topology; we  call this topology of $\sA^{\NN}$ the {\em symbol topology}.
 The dynamics in one-sided symbolic dynamics is  the action of the 
 {\em (one-sided)  shift operator}  $S: \sA^{\NN} \to \sA^{\NN}$ on  individual symbol sequences $\bx= a_0a_1a_2a_3 \cdots$ by 
\begin{equation}
S(\bx) := a_1a_2a_3a_4 \cdots. 
\end{equation} 
In contrast, two-sided symbolic dynamics  (treated in Lind and Marcus \cite{LM95})  uses  the two-sided
shift operator $S: \sA^{\ZZ} \to \sA^{\ZZ}$ with $S((a_i)_{i \in \ZZ}) = (b_i)_{i \in \ZZ}$ with $b_i= a_{i+1}$.
It focuses on  sets $X \subseteq \sA^{\ZZ}$  that are  invariant under the  (two-sided) shift operator: $SX= X$.  
Such sets arise as discretizations of continuous
dynamical systems such as geodesic flow, 
and led to  the original formulation of symbolic dynamics
by  Morse and Hedlund \cite{MH:38}. 
 In one-sided symbolic dynamics on subsets of  $\sA^{\NN}$ the  spaces $X$ can encode initial conditions. 
 Initial conditions can break shift-invariance, so  it is  natural 
to consider  spaces that are stable under the shift: $SX \subseteq X$.

This paper studies the action of decimation and  interleaving operations acting on sets $X$ in the framework of symbolic dynamics and coding theory. 
Decimation operations are important in digital signal processing  and coding theory,
and interleaving operations form a kind of inverse operation to them, see \eqref{eqn:basic}. 

\begin{enumerate}
\item[(1)] 
 At the level of individual symbol sequences, the 
 {\em $j$th decimation operation at level $n$}, for $ i \ge 0$ and $n \ge 1$, denoted $\psi_{i,n}: \sA^{\NN} \to \sA^{\NN}$, for an individual 
symbol sequence  $\bx= a_0a_1a_2a_3 \cdots$ is
\begin{equation} 
 \psi_{i,n}(\bx) :=  a_i a_{i+n}a_{i + 2n}a_{i+3n}  \cdots.
\end{equation}
This operator extracts symbol subsequences  having  indices in an  arithmetic progression given by $i \, (\bmod \, n)$, starting at initial  index $j$. 
\item[(2)]
The  {\em $n$-fold interleaving operation} $\pr_n:  \sA^{\NN} \times \sA^{\NN} \times \cdots \times \sA^{\NN} \to \sA^{\NN}$ 
is an $n$-ary operation whose action on  $n$ individual symbol sequences 
$\bx_{i} = a_{i,0} a_{i,1}a_{i,2} \cdots $ for $0 \le i \le n-1$ 
is defined by  
\begin{equation}
(\bx_0, \bx_1, \cdots, \bx_{n-1}) \mapsto  \by :=
(\prn)_{i=0}^{n-1}  \bx_i = \bx_0 \pr \bx_1 \pr \cdots \pr \bx_{n-1}= b_0b_1b_2 \cdots
\end{equation} 
in which the output  sequence $\by := b_0b_1b_2\ldots$  interleaves the  symbols in arithmetic progressions of
symbol indices $(\bmod \, n)$, so that   
$$b_{i+jn}= a_{i, j}  \quad \mbox{for} \quad j \ge 0, \,\,  0 \le i \le n-1.$$
That is, the output $\by$  has  in its symbol positions  
 $i \, (\bmod\, n)$   the symbols of $\bx_i$  given in order. 
\end{enumerate}
Decimation and interleaving  operations defined pointwise extend  by set union to define set-valued operators acting on arbitrary 
subsets $X$ of $\sA^{\NN}$ (resp. of $(\sA^{\NN})^n$). For examples, see Sections \ref{subsec:2001} and \ref{subsec:2002}. 

All individual symbol sequences $\bx$ are constructible  as  $n$-fold interleavings
of suitable decimations:
\begin{equation}\label{eqn:basic}
\bx= (\prn)_{i=0}^{n-1} \psi_{i, n}(\bx) \quad \mbox{for} \quad \bx \in \sA^{\NN},
\end{equation}
see Section \ref{subsec:41a}. 

\subsection{Summary}\label{sec:12}

This paper treats  two topics.

\subsubsection{}The first  topic  studies   algebraic properties of the algebraic structure of decimation and interleaving operators under composition. The set of all decimation operations is closed under composition, and the decimation and shift actions are compatible in a sense we describe in Section 3.
Decimation operators are closed under composition.

We define the {$n$-fold interleaving closure} $X^{[n]}$ of a set $X$ in Section \ref{sec:main_results} 
as $X^{[n]} = (\pr_n)_{i=0}^{n-1} \psi_{i,n}(X),$ an operation that combines both decimations and interleavings.  
We show the  operation sending $X$ to $X^{[n]}$ is a set-valued closure operation in the Moore sense, in particular $X \subseteq X^{[n]}$. 
 A main result is  that  interleaving closure operators  under composition satisfy 
 \begin{equation}\label{eqn:compose_cc} 
(X^{[m]})^{[n]}= (X^{[n]})^{[m]} = X^{ [\lcm(m,n)]},
\end{equation} 
where $\lcm(m,n)$ denotes the least common multiple of $m$ and $n$. Thus
these operators are closed under composition,
commute under composition, and  are idempotent. 

We show  $X$ is closed under $n$-fold interleaving closure, 
meaning $X = X^{[n]}$,  if and only if $X$ factorizes as $X =(\pr_n)_{j=0}^{n-1} X_j$ under the $n$-fold interleaving operation for some $X_j$.
 We study   the allowable  sets $M$ of integers $n\ge 1$ for which there exists some
  set $X$ that has $X= X^{[n]} $ if and only if $n \in M$.  That is, letting  $\IC(X)= \{ n: \, X= X^{[n]}\},$ 
  we classify the sets $M \subset \NN^{+}$ such that $M = \IC(X)$ for some $X\subset \sA^{\NN}$. 
 We show that if finite, the set $\IC(X)$  consists of  the set of all divisors of an integer $n_0$,
 and all such $n_0$ may occur.  A new phenomenon is the existence of {\em infinitely  factorizable} $X$,  which necessarily have $X=X^{[n]}$ for all $n$ in an infinite distributive sublattice of $\mathbb{N}^+$ under the divisibility partial order, downward closed under divisibility. 
 We show all  such infinite sublattices may occur for non-closed $X$, but if $X$ is closed, we show  the only allowed infinite sublattice is $\mathbb{N}^{+}$. 

 There is   an additional algebraic structure consisting of  the collection of all operations obtained
 from combining  interleaving operations of different arities  under composition. These form a { nonsymmetric operad} in the category of sets,
 and we  term  it the {\em interleaving  nonsymmetric operad}. We
 give a series of universal shuffle identities under composition satisfied by this  operad. 
 We discuss  the operad formalism   in Section \ref{subsubsec:136} 
 and in Appendix \ref{sec:operadsec}.

\subsubsection{}The second topic studies symbolic dynamics aspects of  decimation and interleaving operations.
The shift operation acts compatibly with decimations and with $n$-fold interleaving, and we give commutation identities describing its action.
The class of shift-invariant sets (those with  $SX = X$)  and the class of shift-stable sets (those with $SX \subseteq X$) are
 not  preserved under interleaving. We  introduce an enlarged class of sets  better adapted to these operations. 

A  set $X \subseteq \sA^{\NN}$ is said to be {\em weakly shift-stable}  if there are integers  $k > j \ge 0$ such that $S^k X \subseteq S^j X$. 
 The set  $X$ need not  be  a closed set  in the symbol topology.
  We show the  class of all  weakly shift-stable sets, denoted $\sW(\sA)$,  is closed under the shift and  under all decimation and interleaving operations, as is the subclass $\overline{\sW}(\sA)$ of all closed  sets in $\sW(\sA)$.
%
%

The  complexity of a set  $X$ can be measured using various notions of the entropy of  $X$, which provide
 invariants that distinguish  dynamical systems.  The paper \cite{AL14a} studied  two concepts of  
of entropy for $X$,  the topological entropy  $H_{\topp}(X)$ and {\em path topological entropy} $H_p(X)$,
which we term here  {\em prefix topological entropy}.
For general $X$ 
one has $H_p(X) \le H_{\topp}(X)$, and 
strict inequality may occur.
We obtain an inequality  relating the prefix topological entropy of an $n$-fold interleaving $X= (\prn)_{i=0}^{n-1} X_i$
to that of its factors  $X_i$:  
\begin{equation}\label{eqn:interleave-prefix-entropy0}
H_{p}(X) \le \frac{1}{n} \sum_{i=0}^{n-1} H_{p} (X_i),
\end{equation}
and strict inequality may occur.
  A main result is that the class of weakly shift-stable sets $\sW(\sA)$ has good properties
for both entropies; the two entropies are equal and equality holds in the interleaving inequality  \eqref{eqn:interleave-prefix-entropy0} . 
In consequence, for weakly shift-stable sets we obtain  a formula for topological entropy under interleaving:   
$$
H_{\topp}(X) = \frac{1}{n} \sum_{i=0}^{n-1} H_{\topp} (X_i). 
$$

\subsubsection{} Most  results in this paper apply  to general sets $X$,  but for symbolic dynamics applications we are most interested in 
closed sets $X$ in the product  topology on $\sA^{\NN}$.  These satisfy:
 \begin{enumerate}
  \item
  If $X$ is a closed set, then all decimations  $\psi_{j, n}(X)$ are closed sets for $j \ge 0$ and  $n \ge 1$.
  \item
  If $X_0, X_1, \cdots, X_{n-1}$ are closed sets, then their $n$-fold interleaving 
$X = (\pr_n)_{i=0}^{n-1} X_i$ is a  closed set. 
\item
 Conversely,  if $X$ is a closed set and has an $n$-fold interleaving factorization
$X= (\pr_n)_{i=0}^{n-1} X_i$, then each $X_i$ is a closed set. 
\end{enumerate}
The  decimation,  interleaving, and shift operators all commute with topological closure operation $X \mapsto \overline{X}$.
In consequence all  $n$-fold interleaving closure operations   commute with topological closure.

Detailed statements of results are made in  Section \ref{sec:main_results}.
The main results concerning properties of $n$-fold interleaving closure operations of a set $X$ are Theorems \ref{thm:410},
 \ref{thm:lcm-factorization} 
 and  \ref{thm:infinite_factor}.
The main results concerning    weakly shift-stable sets $X$  are  Theorems \ref{thm:operation-closure} and
\ref{thm:semistable_prefix_entropy0}.

\subsection{Background}\label{sec:14}
This study  was motivated by work on    path sets   initiated in  \cite{AL14a}.
  Path sets are a class of closed sets in $\sA^{\NN}$  that form a generalization of shifts of finite type and of sofic shifts in symbolic dynamics, and
 which also  include sets not invariant under the   shift operator. Path sets  are described by
 finite automata, and have an  automata-theoretic characterization  as the  
 closed sets  in $\sA^{\NN}$ that are  the set of all infinite paths in some deterministic 
  B\"{u}chi automaton.    
 This class includes interesting sets 
  arising  in fractal constructions and in study of radix expansions in number theory
  (see \cite{AL14b}, \cite{AL14c}) arising from a number theory problem of Erd\H{o}s (\cite{Lagarias09})
   The paper \cite{AL14a} considered decimation operations on path sets
  and showed that decimations of path sets are also path sets.  
  The $p$-adic integers with the $p$-adic topology form a shift space with $p$-symbols, and 
 interleaving operations on path sets arose in this context in \cite{ABL17}.  
 
 The authors recently  studied the action of interleaving operations on path sets, in \cite{ALS20}.
  Interleaving operations already lead to the  breaking of shift-invariance
  even if all sets $X_i$ used  in  the interleaving are shift-invariant.
   The  paper  \cite{ALS20}  shows that  the class $\sC(\sA)$ of all
 path sets on a finite alphabet $\sA$  is closed under all  interleaving operators.

This paper obtains  results valid for general sets $X \subseteq \sA^{\NN}$, 
which provide perspective 
on results on path sets proved in \cite{ALS20}. 
The concept  of  weakly shift-stable
 closed sets $\overline{\sW}(\sA)$ supplies  a good generalization of the class $\sC(\sA)$ of path sets  to more general closed sets.
 The  paper  \cite{ALS20}  shows that all path sets are  weakly shift-invariant,
 which implies they are weakly shift-stable.
 In consequence, the entropy equalities of the present paper under weak shift-stability 
apply to interleaving of path sets.
The present  paper  includes  examples  showing  that various finiteness results given  in \cite{ALS20} for path sets 
are not valid for general closed  sets $X\subseteq \sA^{\NN}$, see 
Remark \ref{rem:73}. 

\subsection{Related work}\label{sec:15}
Decimation operations play an important role  in  sampling and interpolation operations in digital signal processing (``downsampling''), 
and  in multi-scale analysis and wavelets (e.g., \cite{Da92}, \cite{HBL17}). 
Interleaving constructions have been used in coding theory as a method for improving the burst error correction capability of a code (cf. \cite[Section 7.5]{VO89}). 
They are also considered in formal language theory; see  Krieger et al. (\cite{KMRRS:09}). 
The analogue of  $n$-fold  interleaving for finite codes is referred to by coding theorists as \emph{block interleaving of depth $n$}.
Decimation and interleaving operators together  have been considered both in cryptography and cryptanalysis,
cf.  Rueppel \cite{Rueppel86} and Cardell et al \cite{CFR18}. 
Since methods of encoding and decoding can be viewed as dynamical processes, it is of interest to view these operations in
a dynamical context.

\subsubsection{}
There has been prior work on interleaving  operations   in the automata theory literature,
typically for finite words.  In 1974  Eilenberg \cite[Chap. II.3, page 20]{Eilenberg74}
introduced a notion of {\em internal shuffle product} $A \coprod B$ of two recognizable sets (= regular language) which corresponds to
$2$-fold interleaving. A more general notion is {\em alphabetic shuffle}. 
The shuffle product has been characterized in the context of finite automata by Duchamp et al  \cite[Sect. 4]{DFLL:01}.
In this paper we are considering such operations  on infinite words, which differ in nature from the finite word case. 
For infinite words viewed in an automata-theoretic context, see Perrin and Pin \cite{PP04}.
 We are not aware of prior work studying  the  algebraic operator structure of   interleaving operations.

\subsubsection{} 
Regarding dynamics, one-sided shift-stable sets have their dynamics partially classified  by $C^{\star}$-algebra  invariants. The work of Cuntz and Krieger \cite{CuntzK80} and Cuntz \cite{Cuntz81} 
was seminal in attaching such invariants to topological Markov chains (= two-sided shifts of finite type). 
Carlsen (\cite{Carlsen08}, \cite{Carlsen13}) attached $C^{\ast}$-algebras to one-sided subshifts and studied their properties.
Shift-stable sets $X$ are studied in the context of partial isometry actions and $C^{\star}$ algebras attached to them by Dokuchaev and Exel \cite{DE17}.
See Exel \cite{Exel17} for related background. One may ask whether there are  generalizations of these constructions to
weakly shift-stable sets introduced in this paper.

\subsubsection{}
 In the ongoing development of operad theory and $n$-categories, interleaving operations have recently  played a role
 at a categorical level, see 
Leinster \cite{Leinster04} and Cottrell \cite{Cottrell18}. General references for operads are  Markl, Shnider and Stasheff \cite{MSS02},
and more recently   Loday and Vallette \cite{LV12} and Bremner and Dotsenko \cite{BremnerD:14} for algebraic operads.\smallskip

{\bf Acknowledgments.}
We  thank the reviewers for helpful comments and references. 
We  thank   I. Kriz for  a discussion   on operads.
Some  work of  W. Abram and D. Slonim was  facilitated by the Hillsdale College LAUREATES program, 
done by  D. Slonim under the supervision of W. Abram.
Work of J. Lagarias.was partially supported by NSF grants DMS-1401224 and DMS-1701229,
 and by   a Simons Fellowship in Mathematics  in  2019.

%
%
\section{Results}\label{sec:main_results}

We give formal definitions with examples, and then state  results. 

%
%

\subsection{Decimation  operations}\label{subsec:2001}

\begin{defn}  \label{decimationdef1}
{\rm (Decimation operations)} Let $\sA$ be a finite alphabet of symbols.
\begin{enumerate}

\item[(1)] 
For individual sequences $\bx \in \sA^{\NN}$ the {\em $i$-th decimation operation at level $n$}, denoted $\psi_{i,n}: \sA^{\NN} \to \sA^{\NN}$, for
 $i \ge 0$  is defined  for $\bx= a_0a_1a_2a_3 \cdots$  by 
$$
\psi_{i,n}(\bx) :=  a_i a_{i+n}a_{i + 2n}a_{i+3n}  \cdots.
$$
This operator extracts symbol subsequences  having  indices in an  arithmetic progression given by $ i \,\pmod n$, 
which starts at initial  index $i$. 
The {\em principal $n$-decimations} are those with $0 \le i \le n-1$. 

\item [(2)] For sets  $X \subseteq \sA^{\NN}$ the
 {\em $i$-th decimation  at level $n$}, denoted  $\psi_{i, n}(X)$, 
is the set union
\begin{equation}
\psi_{i, n}(X) := \left\{\psi_{i,n}(\bx)\, : \,\bx\in X\right\}.
\end{equation} 
\end{enumerate}
\end{defn}

\begin{exmp} \label{exmp:11}
For the alphabet
$\sA= \{0,1,2,3\}$ consider the  sets\footnote{Here $\bx_1= (01)^{\infty} = 010101...$}   
$$
X= \{ \bx_1= (01)^{\infty}, \bx_2= (10)^{\infty}\},\,\,  \mbox{and} \,\, Y= \{ \by_1=(323)^{\infty}, \by_2= (332)^{\infty}\}, 
$$
containing two periodic infinite words of period $2$ 
and  two  periodic infinite words of period $3$, respectively. 

 The principal  $2$-decimations of the elements of $X$ are 
$$
\psi_{0,2} ((01)^{\infty}) = 0^{\infty}, \psi_{1,2}((01)^{\infty}) = 1^{\infty},
\quad \mbox{and} \quad 
\psi_{0,2}((10)^{\infty}) = 1^{\infty},
 \psi_{1,2} ((10)^{\infty})= 0^{\infty}.
$$
Thus 
$
\psi_{0,2}(X) := 
 \{ 0^{\infty}, 1^{\infty} \}  \quad \mbox{and}\quad  \psi_{1,2}(X)= \{ 1^{\infty}, 0^{\infty}\}= \psi_{0,2}(X).
$

 The principal $2$-decimations of the elements of $Y$  are 
$$
 \psi_{0,2} ((323)^{\infty}) = (332)^{\infty}, 
 \psi_{1,2}((323)^{\infty}) =  (233)^{\infty}, 
 \,  \mbox{and} \, 
 \psi_{0,2}((332)^{\infty}) = (323)^{\infty},
 \psi_{1,2} ((332)^{\infty})= (332)^{\infty}. 
$$
We obtain 
$
\psi_{0,2}(Y) := \{   (332)^{\infty},  (323)^{\infty}\}=Y$ 
 and $ \psi_{1,2}(Y)= \{ (233)^{\infty}, (332)^{\infty} \} \ne Y.$
\end{exmp} 

In Section \ref{sec:decimation} we show:
\begin{enumerate}
\item
The set of all  decimation operations are closed under composition. For $X \subseteq \sA^{\NN}$, 
$$\psi_{j,m} \circ \psi_{i, n}(X)  = \psi_{i+ jn, mn}(X) .$$  
 This identity on subscripts matches an action of the $(ax+b)$-group on $\ZZ$. 
 \item
 The  shift action is compatible with the decimation action: For $X \subseteq \sA^{\NN}$, 
 $$
 \psi_{i,n}(SX)  = \psi_{i+1, n}(X). 
$$
and
$$
S \psi_{i,n}(X) = \psi_{i,n}(S^nX).
$$
\end{enumerate}

%
%

\subsection{Interleaving   operations}\label{subsec:2002}

 Interleaving operations  comprise an  infinite  collection of  $n$-ary operations $(n \ge 1)$,   defined for arbitrary subsets $X$ of the shift space  $\mathcal{A}^\mathbb{N}$.

\begin{defn}  \label{interleavingdef2}
{\rm (Interleaving operations)} 
Let $\sA$ be a finite alphabet of symbols.

\begin{enumerate}
\item[(1)] For individual sequences $\bx_i =a_{i,0} a_{i,1}a_{i, 2} \cdots  \in \sA^{\NN}$, 
 ($0 \le i \le n-1$) 
the   {\em $n$-fold interleaving operation} $\pr_n:  \sA^{\NN} \times \sA^{\NN} \times \cdots \times \sA^{\NN} \to \sA^{\NN}$, 
denoted either $(\prn)_{i=0}^{n-1}  \bx_i$ or $\bx_0 \pr \bx_1 \pr \cdots \pr \bx_{n-1}$,
combines these sequences  by 
$$ (\bx_0, \bx_1, \cdots, \bx_{n-1}) \mapsto        
   \bx_0 \pr \bx_1 \cdots \pr \bx_n = \by :=( a_{0,0}\,a_{1,0}   \cdots  a_{n-1,0}) \circ ( a_{0,1}\,a_{1,1}  \cdots a_{n-1,1}) \circ (a_{0,2} \cdots. 
 $$
where $\circ$ denotes concatenation of sequences. 
That is, $\by= b_0b_1b_2\cdots$ with 
$$
b_{i+kn}= a_{i, j}  \quad \mbox{for} \quad  \,\,  0 \le i \le n-1, \,\, \mbox{and} \,\, j \ge 0,
$$
so that the symbols of $\by$ in symbol positions  $i \, (\bmod\, n)$ are the  symbols of $\bx_i$, ($0 \le i \le n-1)$.
\item[(2)]
 For  sets  
 $X_j \subseteq \mathcal{A}^\mathbb{N}$, ($0 \le j \le n-1$),
 their \emph{$n$-fold interleaving}, 
 denoted $(\prn)_{i=0}^{n-1}X_i$ or \\
 $X_0 \prn X_1\prn X_2\prn \cdots \prn X_{n-1}$,
 is defined by the set union:
$$
 (\prn)_{i=0}^{n-1}X_i  = \{\bx_0\pr \bx_1 \pr \cdots \pr \bx_{n-1}:\bx_j\in X_j\text{ for all }0\leq j\leq n-1\}.
 $$
\end{enumerate}
\end{defn}

A set $X= (\pr_n)_{i=0}^{n-1} X_i$ is said to have an {\em $n$-fold interleaving factorization}. 
The sets $X_i$ are called {\em $n$-fold interleaving factors} of $X$, or just {\em interleaving factors}.
One can express $n$-fold interleavings in terms of principal decimations of level $n$ as:
 $(\prn)_{i=0}^{n-1}X_i=\{\bx\in\sA^{\NN}:\psi_{j,n}(\bx)\in X_j \text{ for all }0\leq j\leq n-1\}$.,
 see Proposition \ref{prop:41}. 
 
\begin{exmp} \label{exmp:12} 
Continuing Example \ref{exmp:11}, the $2$-fold interleaving of $X$ with itself is
\begin{eqnarray*}
X \pr X&=& \{ \bx_1 \pr \bx_1, \bx_1\pr \bx_2, \bx_2 \pr \bx_1, \bx_2\pr \bx_2\} \\
&=&\{ (0011)^{\infty}, (0110)^{\infty}, (1001)^{\infty}, (1100)^{\infty}\}. 
\end{eqnarray*} 
It contains four periodic words of period $4$. 

 The $2$-fold interleaving of $Y$ with itself is
\begin{eqnarray*}
Y \pr Y&=& \{ \by_1 \pr \by_1, \by_1\pr \by_2, \by_2 \pr \by_1, \by_2\pr by_2\} \\
&=&\{ (332233)^{\infty}, (332332)^{\infty}, (333223)^{\infty}, (333322)^{\infty}\}.
\end{eqnarray*} 
It contains four periodic words of period $6$.
The  $2$-fold interleavings of $X$ and $Y$ are
\begin{eqnarray*}
X \pr Y &:= &\{ \bx_1 \pr \by_1, \bx_1 \pr \by_2, \bx_2 \pr \by_1, \bx_2 \pr \by_2 \} \\
&= & \{ (031203130213)^{\infty}, (031302130312)^{\infty}, (130213031203)^{\infty},
(130312031302)^{\infty}.  \}
\end{eqnarray*}
\begin{eqnarray*}
Y \pr X &:= &\{ \by_1 \pr \bx_1, \by_1 \pr \bx_2, \by_2 \pr \bx_1, \by_2 \pr \bx_2 \} \\
&= & \{ (302130312031)^{\infty}, (303120313021)^{\infty}, (312031302130)^{\infty},
(313021303120)^{\infty}.  \},
\end{eqnarray*} 
Each of them contains four periodic words of  period $12$. We have $X \pr Y \ne Y \pr X$.
\end{exmp}

A  basic relation between interleaving and decimation is an identity, 
 valid at the pointwise level,  stating that ordered $n$-fold decimations post composed with $n$-fold interleaving
give  the identity map:
\begin{equation}\label{eq:principal-identity}
(\prn)_{i=0}^{n-1} \psi_{i, n}(\bx) = \bx \quad \mbox{for} \quad \bx \in \sA^{\NN}.
\end{equation} 
For this reason  we  call the decimations $\psi_{i,n}$ for $0 \le i \le n-1$, the
 {\em principal decimations}. 
The remaining decimations    $ i\ge n$ may be obtained by applying the one-sided shift operator 
to these decimation sets; see Proposition  \ref{prop:decimation_shift}.

%
%

\subsection{Interleaving closure operations}\label{subsubsec:201}

 The interleaving operations together with principal decimations define a family of set-theoretic closure operations on general
subsets $X \subseteq \sA^{\NN}$. These operators are a main focus of this paper.

\begin{defn} \label{def:interleaving-closure}
The  {\em $n$-fold interleaving closure} operation $X \mapsto X^{[n]}$ is
defined for each  $X \subseteq \sA^{\NN}$ by
\begin{equation}
X^{[n]} := (\prn)_{i=0}^{n-1} \psi_{i,n}(X).
\end{equation} 
\end{defn}


\begin{exmp}
For  $X= \{ (10)^{\infty}, (01)^{\infty}\}$,  the $2$-fold interleaving closure
$X^{[2]}  := \psi_{0,2} (X) \pr \psi_{1,2}(X)$ is 
$$
X^{[2]}= \{ 0^{\infty}, (01)^{\infty}, (10)^{\infty}, 1^{\infty} \}.
$$
We  have $X \subsetneq X^{[2]}$.
\end{exmp}


\begin{exmp} \label{basicexmp} (Interleaving and $n$-fold  interleaving closure) Let $\mathcal{A} = \{0,1\}$ and let 
$X_0 \subset \mathcal{A}^\mathbb{N}$ be the one-sided Fibonacci shift consisting of all words that do not contain the pattern  $11$ in two consecutive digits. 
Let $X_1=\sA^{\NN}$  be the full shift. Then:
\begin{enumerate}[(1)]
\item[(1)]  $X_0 \pr X_1 \subset \mathbb{A}^\mathbb{N}$ consists of all words that do not contain a $1$ in 
digits $i$ and $i+2$ for any $i$ even.
 That is, there can be no $1$'s in consecutive even digits, but there are no other restrictions on the word. Here 
  $X_0$ and $X_1$ are each invariant under the shift operator, i.e., $S(X_i)=X_i$, 
   but $X_0 \pr X_1$ is not shift-invariant.
\item[(2)]  Interleaving any number of copies of $X_1$ gives $X_1$. That is, $(\pr_n)_{j=0}^{n-1} X_1 = X_1$ for  $n \ge 1$.
\item[(3)] The $n$-fold interleaving closure of $X_0$ is $X_1$ for all $n \ge 2$, that is, $X_0^{[n]} = X_1= \mathcal{A}^\mathbb{N}$. This holds because $\psi_{j,n}(X_0) = X_1$ for all $j \ge 0$ when $n \ge 2$.
\item[(4)] Likewise, $X_1^{[n]} = X_1$ for $n \ge 1$. So $X^1$ has $n$-fold interleavings for all $n \ge 1$.
\end{enumerate}
\end{exmp}

In  Section \ref{subsec:41a} we show the existence of an $n$-fold interleaving factorization of a set $X$ corresponds to 
its invariance under $n$-fold interleaving closure, and that its 
interleaving factors are its principal decimations.

\begin{thm}\label{thm:DIF} 
{\rm (Decimations and interleaving factorizations)}

(1) A  subset $X$ of $\msrA^{\NN}$  
has an {$n$-fold interleaving factorization} $X = X_{0} \pr X_{1} \pr \cdots \pr X_{n-1}$
if and only if $X= X^{[n]}$.  

(2)  If $X= X_0 \pr X_1 \pr \cdots \pr X_{n-1}$  has an $n$-fold interleaving factorization, then its
ordered set of $n$-fold interleaving factors is  unique, given by its principal decimations   
$$
X_{i} = \psi_{i,n}(X) \quad \mbox{for} \quad 0 \le i \le n-1.
 $$
\end{thm}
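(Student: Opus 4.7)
The plan is to prove both parts simultaneously by establishing the key lemma: whenever $X = X_0 \pr X_1 \pr \cdots \pr X_{n-1}$ (with all $X_i$ nonempty), the principal decimations of $X$ exactly recover the interleaving factors, i.e.\ $\psi_{i,n}(X) = X_i$ for $0 \le i \le n-1$. Once this is in hand, parts (1) and (2) fall out with essentially no extra work.

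First I would establish the pointwise counterpart to identity \eqref{eq:principal-identity}: for any individual sequences $\by_0, \ldots, \by_{n-1} \in \sA^{\NN}$,
\begin{equation*}
\psi_{i,n}\bigl(\by_0 \pr \by_1 \pr \cdots \pr \by_{n-1}\bigr) = \by_i \qquad (0 \le i \le n-1),
\end{equation*}
which is immediate from unwinding Definitions \ref{decimationdef1} and \ref{interleavingdef2}: the interleaving places the $j$-th symbol of $\by_i$ in position $i+jn$, and $\psi_{i,n}$ reads exactly those positions. With this pointwise identity, the inclusion $\psi_{i,n}(X) \subseteq X_i$ is direct: any $\bx \in X$ has the form $\by_0 \pr \cdots \pr \by_{n-1}$ with $\by_j \in X_j$, whence $\psi_{i,n}(\bx) = \by_i \in X_i$. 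For the reverse inclusion $X_i \subseteq \psi_{i,n}(X)$, given $\bz \in X_i$ I pick any $\by_j \in X_j$ for $j \ne i$ (possible because each $X_j$ is nonempty) and form $\bw = \by_0 \pr \cdots \pr \by_{i-1} \pr \bz \pr \by_{i+1} \pr \cdots \pr \by_{n-1}$; then $\bw \in X$ and $\psi_{i,n}(\bw) = \bz$, so $\bz \in \psi_{i,n}(X)$.

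Part (1) then follows quickly. If $X = X_0 \pr \cdots \pr X_{n-1}$, the lemma gives $\psi_{i,n}(X) = X_i$, and therefore
\begin{equation*}
X^{[n]} = (\pr_n)_{i=0}^{n-1} \psi_{i,n}(X) = (\pr_n)_{i=0}^{n-1} X_i = X.
\end{equation*}
Conversely, if $X = X^{[n]}$, the very definition of $X^{[n]}$ exhibits an $n$-fold interleaving factorization with $X_i := \psi_{i,n}(X)$. Part (2) is an immediate consequence of the lemma: any factorization $X = X_0 \pr \cdots \pr X_{n-1}$ must have $X_i = \psi_{i,n}(X)$, which is determined by $X$ alone.

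The only genuine subtlety — rather than a true obstacle — is the empty-factor edge case: if some $X_j = \emptyset$, then $X = \emptyset$, every principal decimation of $X$ is empty, and uniqueness of the factors in part (2) must be interpreted accordingly. I would dispatch this either by assuming at the outset that all $X_i$ are nonempty (the natural setting) or by noting separately that when $X = \emptyset$ both sides of the equivalence in (1) hold trivially and the factorization in (2) is the all-empty tuple $X_i = \psi_{i,n}(\emptyset) = \emptyset$.
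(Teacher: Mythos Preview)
Your proof is correct and follows essentially the same route as the paper: the paper packages your key lemma as Proposition~\ref{prop:41}(2), proving $\psi_{i,n}(X)=X_i$ via the same two inclusions (the pointwise identity for $\subseteq$, and the pick-arbitrary-elements argument for $\supseteq$), and then deduces both parts of Theorem~\ref{thm:DIF} exactly as you do. Your explicit treatment of the empty-factor edge case is a small addition the paper omits.
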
 

Regarding  $(2)$, there typically are many sets $Y$ such that
$\psi_{i,n} (Y) = \psi_{i, n}(X)$ for $ 0\le i \le n-1$, and we show $X$ contains every such set $Y$
in Theorem \ref{thm:inclusion}. 

In  Section \ref{subsec:21}  we justify the name $n$-fold interleaving closure by showing  
that $X \mapsto X^{[n]}$ is a set-theoretic closure operation,
as formalized in 
 Gr\"{a}tzer  \cite[Chap. I, Sect. 3.12, Defn.26]{Gratzer:11}, 
and  that $X^{[n]} $  is characterized  as the maximal set $Z$  having the 
property that 
$\psi_{i, n}(Z) = \psi_{i ,n}(X)$
for $0 \le  i\le n-1$.

In Section \ref{subsec:23} we establish  universal algebraic identities relating certain compositions
of  $n$-fold interleavings for different $n$.  

\begin{prop}\label{thm:shuffle} {\rm (Interleaving shuffle identities)}
For each $m,n \ge 2$ and arbitrary sets $\{ X_i : 0 \le i\le  mn-1\}$ contained in the one-sided shift $\msrA^{\NN}$,
one has the identity of sets
\begin{equation}\label{eqn:shuffle}
(\pr_n)_{i=0}^{n-1}\left((\pr_m)_{j=0}^{m-1} X_{i+ jn} \right) =  (\pr_{mn})_{k=0}^{mn-1}\, X_{k}. 
\end{equation}
\end{prop}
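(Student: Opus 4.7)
The plan is to reduce to a pointwise identity between individual symbol sequences, and then lift to sets by taking unions. First, I would establish that for any choice of individual sequences $\bx_{i,j} \in \sA^{\NN}$ with $0 \le i \le n-1$ and $0 \le j \le m-1$, setting $\bx_k := \bx_{i,j}$ whenever $k = i + jn$, one has the equality of individual sequences
\[
(\pr_n)_{i=0}^{n-1}\Bigl((\pr_m)_{j=0}^{m-1} \bx_{i,j}\Bigr) \;=\; (\pr_{mn})_{k=0}^{mn-1}\bx_k.
\]
The set identity \eqref{eqn:shuffle} then follows immediately: the map $(i,j) \mapsto i + jn$ is a bijection from $\{0, \ldots, n-1\} \times \{0, \ldots, m-1\}$ onto $\{0,\ldots, mn-1\}$, so letting $\bx_{i,j}$ range over $X_{i+jn}$ on the left matches letting $\bx_k$ range over $X_k$ on the right; by the definition of $n$-fold interleaving of sets (Definition \ref{interleavingdef2}(2)) the two set unions coincide.

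To verify the pointwise identity, I would compare the symbol in position $p \ge 0$ on each side via the mixed-radix expansion $p = i + jn + \ell mn$ with $0\le i \le n-1$, $0 \le j \le m-1$, and $\ell \ge 0$. Writing $\bx_{i,j} = a_{i,j,0}\, a_{i,j,1}\, a_{i,j,2} \cdots$, the definition of the outer $n$-fold interleaving places, in position $i + sn$ of the LHS, the $s$-th symbol of $\by_i := (\pr_m)_{j=0}^{m-1} \bx_{i,j}$. Setting $s = j + \ell m$ and unfolding the inner $m$-fold interleaving, this symbol equals $a_{i,j,\ell}$. For the RHS, write $p = k + \ell' mn$ with $0 \le k \le mn-1$; the $mn$-fold interleaving places $a_{k, \ell'}$ in position $p$. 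Substituting the unique decomposition $k = i + jn$ with $0 \le i \le n-1$ and $0 \le j \le m-1$ gives $p = i + jn + \ell' mn$, and uniqueness of the mixed-radix expansion forces $\ell' = \ell$. Thus both sides produce $a_{i,j,\ell}$ in every position, proving the pointwise identity.

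The main (and only) obstacle is purely notational: one must be careful with the bookkeeping of the two distinct indexings $k \leftrightarrow (i,j)$ and of the double mixed-radix expansion of the position index $p$. Once one writes down $p = i + jn + \ell mn$ as the unique ``base-$n$ then base-$m$'' digit expansion of $p$, the whole identity reduces to the uniqueness of this expansion and the matching bijection $(i,j) \leftrightarrow i + jn$. No deeper structural input is required, so in particular no prior result from the paper is needed; the statement is a universal algebraic identity satisfied by the interleaving operations themselves.
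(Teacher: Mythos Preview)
Your proof is correct, and the approach is essentially sound, but it differs from the paper's route. The paper does not work symbol-by-symbol; instead it invokes Proposition~\ref{prop:41}(2) (the decimation characterization $(\prn)_{i=0}^{n-1}X_i=\{\bx:\psi_{i,n}(\bx)\in X_i\text{ for all }i\}$) twice and Proposition~\ref{prop:decimation_shuffle} (the composition law $\psi_{j,m}\circ\psi_{i,n}=\psi_{i+jn,mn}$) once, giving a four-line chain of set equalities that never unpacks a mixed-radix digit expansion explicitly. In effect the paper has already encapsulated your position-index bookkeeping $p=i+jn+\ell mn$ inside those two earlier results, so its proof is shorter and more modular but relies on that prior machinery. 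Your argument is self-contained and arguably makes the combinatorial reason for the identity more visible; the paper's argument highlights that the shuffle identity is nothing more than the decimation composition law read through the interleaving/decimation duality.
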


These identities  are termed  {\em shuffle identities} because the $n$-fold interleaving operation acts
like  a shuffling of $n$ decks of cards together, taking the top cards in a particular order from each of the $n$ decks,
where the cards correspond to positions of symbols in the expansion.

 In Section \ref{subsec:24} we establish a main result determining the action of composition of interleaving closure operators.
The shuffle identities play a crucial role in proving this  result.

\begin{thm}\label{thm:410}
{\rm (Composition of interleaving closures)}
 For all $m ,n \ge1$, and all $X \subseteq \sA^{\NN}$,
\begin{equation}\label{eqn:decimation_mn2a}
 (X^{[m]})^{[n]} = (X^{[n]})^{[m]} = X^{[ \lcm(m,n) ] },
 \end{equation}
 where $\lcm(m,n)$ denotes the least common multiple of $m$ and $n$. 
\end{thm}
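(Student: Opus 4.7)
The plan is to reduce the identity to the pointwise level: extract an elementwise membership criterion for $X^{[n]}$, apply it twice to $(X^{[m]})^{[n]}$, and then use the Chinese Remainder Theorem to identify double intersections of residue classes with single residue classes modulo $\lcm(m,n)$. The identity $(X^{[n]})^{[m]} = X^{[\lcm(m,n)]}$ will follow by swapping the roles of $m$ and $n$.

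Unwinding Definition \ref{def:interleaving-closure} together with the set-level definitions of $n$-fold interleaving and principal decimation yields the pointwise criterion: $\bx \in X^{[n]}$ if and only if for every $i \in \{0,\ldots,n-1\}$ there exists $\by^{(i)} \in X$ with $\psi_{i,n}(\by^{(i)}) = \psi_{i,n}(\bx)$, equivalently, $\by^{(i)}$ agrees with $\bx$ on the residue class $A_i^{(n)} := i + n\NN$. Iterating, $\bx \in (X^{[m]})^{[n]}$ if and only if for each pair $(i,j)$ with $0 \le i \le n-1$ and $0 \le j \le m-1$ there exists $\bx^{(i,j)} \in X$ agreeing with $\bx$ on $A_i^{(n)} \cap B_j^{(m)}$, where $B_j^{(m)} := j + m\NN$. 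Setting $\ell = \lcm(m,n)$ and $d = \gcd(m,n)$, the Chinese Remainder Theorem shows that $A_i^{(n)} \cap B_j^{(m)}$ is empty when $i \not\equiv j \pmod{d}$ and otherwise equals a single residue class $r(i,j) + \ell\NN$, with $(i,j) \mapsto r(i,j)$ giving a bijection from the admissible pairs onto $\{0,\ldots,\ell-1\}$.

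The inclusion $(X^{[m]})^{[n]} \subseteq X^{[\ell]}$ then falls out at once: each $\bx^{(i,j)}$ doubles as a witness for $\bx \in X^{[\ell]}$ at residue $r(i,j)$. For the reverse inclusion $X^{[\ell]} \subseteq (X^{[m]})^{[n]}$, given witnesses $\by_r \in X$ for $0 \le r \le \ell-1$, I would build each required $\bx_i \in X^{[m]}$ by defining it on $B_j^{(m)}$ to equal $\by_{r(i,j)}$ when $j \equiv i \pmod{d}$ and to equal a fixed reference element $\by^* \in X$ otherwise (the case $X = \emptyset$ is trivial). Membership $\bx_i \in X^{[m]}$ is built into the construction, and $\bx_i = \bx$ on $A_i^{(n)}$ follows from the fact that $A_i^{(n)}$ is the disjoint union of the pieces $A_i^{(n)} \cap B_j^{(m)}$ with $j \equiv i \pmod{d}$, and on each such piece $r(i,j) \equiv k \pmod{\ell}$ for every $k$, forcing $\by_{r(i,j)}(k) = \bx(k)$. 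The main obstacle I anticipate is orchestrating this patching cleanly, since each $\bx_i$ must simultaneously witness agreement with $\bx$ across the full residue class $A_i^{(n)}$ and admit the $m$-fold factorization forced by $X^{[m]}$-membership; the CRT bookkeeping above is what reconciles the two demands.
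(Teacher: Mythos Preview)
Your argument is correct and takes a genuinely different route from the paper's. The paper works at the operator level: it first proves the coprime case $(X^{[m]})^{[n]}=X^{[mn]}$ when $\gcd(m,n)=1$ as a separate proposition (via the shuffle identities and a delicate comparison of $\psi_{i,n}(X^{[m]})$ with $\psi_{i,n}(X)^{[m]}$), and then bootstraps to general $m,n$ by factoring $d=\gcd(m,n)$ as $d=ef$ with $e\mid m$, $f\mid n$, $\gcd(m/e,n/f)=1$, combining that coprime result with the monotonicity $X^{[d]}\subseteq X^{[de]}$. Your approach bypasses all of this: by passing immediately to the elementwise membership criterion and invoking the Chinese Remainder Theorem to identify $A_i^{(n)}\cap B_j^{(m)}$ with residue classes mod $\ell$, you handle the general case in one stroke without isolating the coprime situation or invoking the shuffle identities. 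Your argument is shorter and more elementary; the paper's buys a suite of intermediate operator identities (the inclusions $(X^{[m]})^{[n]}\subseteq X^{[mn]}$ and $X^{[m]}\subseteq X^{[mn]}$, the formula $X^{[mn]}=(\pr_n)_{i}(\psi_{i,n}(X))^{[m]}$) that it reuses elsewhere.

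One small expository point: your ``Iterating'' equivalence is stated as an immediate iff, but its reverse direction already requires the patching construction you only spell out later for $X^{[\ell]}\subseteq (X^{[m]})^{[n]}$. The patching you give (define $\bx_i$ on $B_j^{(m)}$ via the appropriate witness) does establish it, so nothing is missing, but in a final write-up you should either justify both directions of that iff up front or drop the iff and argue the two inclusions separately as you in fact do.
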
 

In Section \ref{sec:25}  we   show  interleaving commutes with intersection:
$$\bigcap_{j=0}^{m-1}(({\prn})_{i=0}^{n-1} X_{jn+i}) =({\prn})_{i=0}^{n-1}(\bigcap_{j=0}^{m-1}X_{jn+i}).$$
In Section \ref{sec:26a} we determine the action of the 
shift operator on  $n$-fold interleavings  and interleaving closures.  In particular we show that 
$$
S X^{[n]}= (SX)^{[n]}.
$$

In Section \ref{sec:26b} we  show that the  topological closure operation commutes with both decimations and interleaving
operations.
In particular it
 commutes under composition with  $n$-fold interleaving closure:
$$
\overline{X}^{[n]} = \overline{ X^{[n]}}. 
$$
Thus if   $X$ is a closed then its  $n$-fold interleaving closure   $X^{[n]}$  is  closed.

%
%

\subsection{Structure of interleaving factorizations}\label{subsubsec:133}

We study the possible structure of the set of all interleaving factorizations of a fixed set $X  \subseteq \sA^{\NN}$.


\begin{defn} 
Let  $X$ in $\sA^{\NN}$ be a fixed set, with $\sA$  a finite alphabet.

(1) The {\em interleaving closure set} $\IC(X) \subseteq \NN_{+}$ of $X$ is the set of integers
$$
\IC(X) := \{ n: n \ge 1 \quad \mbox{and} \quad X= X^{[n]} \}.
$$

(2) The {\em interleaving factor set} $\fF(X)$ consists of all $n$-ary interleaving factors, $X_{i,n}$, for all   $n \in \IC(X)$, i.e.
$$
\fF(X) = \{\psi_{i,n}(X):n\in\IC(X),0\leq i\leq n-1\}
$$

(3) The {\em (full) decimation set} $\ffD(X)$ consists of all decimations of $X$.
$$
\ffD(X) = \{ \psi_{i,n}(X): \, i \ge 0, \,\, n \ge 1\}.
$$
The {\em principal decimation set} $\ffD_{\text{\em prin}}(X)$ consists of all principal decimations
$$
\ffD_{\text{\em prin}}(X) := \{ \psi_{i,n}(X): \,  \,  n \ge 1, \,\, 0\le i \le n-1 \}.
$$
\end{defn}

The interleaving factor set is a subset of the set of all principal decimations: $\fF(X) \subseteq \ffD_{\text{\em prin}}(X) $
We always have   $X \in \fF(X)$ and $1 \in \sN(X)$.

 An important feature of factorizations  is that some  $X$
 are {\em  infinitely factorizable}  in the sense that they have $n$-fold interleaving factorizations for infinitely many  $n$,
 i.e.  $\IC(X)$  is infinite.
  The full one-sided shift $ X= \sA^{\NN}$ on the alphabet $\msrA$ is an example;
it has $m$-fold factorizations for all $m \ge 1$, and $\IC(\sA^{\NN}) = \NN^{+}$, while its  interleaving factor set  $\fF(\sA^{\NN})= \{ \sA^{\NN}\}$ contains one element.
 We term all the remaining ones {\em finitely factorizable}.   
There exist  closed sets $X$ 
having infinite $\IC(X)$ and   having an  infinite 
 interleaving factor set $\fF(X)$, see example \ref{exmp:infinite-factor-set}.

\begin{thm} \label{thm:lcm-factorization} 
{\rm (Structure of  interleaving closure sets) }
Let $\IC(X) = \{ n \ge 1: \, X=X^{[n]}\}$. Then $\IC(X)$ is nonempty and 
has the following properties.

(1) If $n \in \IC(X)$ and $d$ divides $n$, then $d \in \IC(X)$.

(2) If $m,n \in \IC(X)$ then their least common multiple $\lcm(m,n) \in \IC(X)$. 

 Conversely,  if a subset $N \subset \NN^{+}$ is nonempty 
and has properties (1) and (2), then there exists $X \subseteq \sA^{\NN}$ with $N = \IC(X)$. 
\end{thm}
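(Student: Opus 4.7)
The forward direction follows directly from Theorem~\ref{thm:410} together with the closure property $X \subseteq X^{[k]}$ established in Section~\ref{subsec:21}. Nonemptiness is immediate: since $\psi_{0,1}$ is the identity map, $X^{[1]} = \psi_{0,1}(X) = X$, so $1 \in \sN(X)$. For (1), if $n \in \sN(X)$ and $d \mid n$, then $\lcm(d,n)=n$, and Theorem~\ref{thm:410} gives $(X^{[d]})^{[n]} = X^{[\lcm(d,n)]} = X^{[n]} = X$; combined with the closure chain $X \subseteq X^{[d]} \subseteq (X^{[d]})^{[n]} = X$, this forces $X = X^{[d]}$, so $d \in \sN(X)$. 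For (2), if $m,n \in \sN(X)$, Theorem~\ref{thm:410} yields $X^{[\lcm(m,n)]} = (X^{[m]})^{[n]} = X^{[n]} = X$.

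For the converse, given any nonempty $N \subseteq \NN^+$ satisfying (1) and (2), the plan is to realize it as $\sN(X)$ by the explicit construction
\[
X := \bigcup_{m \in N} X_m, \qquad X_m := \{\bx \in \sA^{\NN} : S^m \bx = \bx\},
\]
the set of periodic sequences whose period lies in $N$ (equivalently, by divisor-closure of $N$, whose minimal period lies in $N$). Since $N$ is nonempty and downward closed, $1 \in N$, hence $X_1 \subseteq X$. To show $N \subseteq \sN(X)$, fix $n \in N$. First, $\psi_{i,n}(X) \subseteq X$ for every $i$: if $\bz \in X$ has period $m \in N$, then $\psi_{i,n}(\bz)$ has period dividing $m/\gcd(m,n) \mid m$, which lies in $N$ by (1). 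Second, $X^{[n]} \subseteq X$: an arbitrary $\by \in X^{[n]}$ has the form $\bx_0 \pr \bx_1 \pr \cdots \pr \bx_{n-1}$ with each $\bx_i = \psi_{i,n}(\bz_i)$, where $\bz_i \in X$ has period $m_i \in N$, and the interleaving's period divides $\lcm(n, m_0, \ldots, m_{n-1})$, which lies in $N$ by iterated application of (2) and (1); thus $\by \in X$. Combined with the closure inclusion $X \subseteq X^{[n]}$, this yields $X = X^{[n]}$, so $n \in \sN(X)$.

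For $\sN(X) \subseteq N$, I take $n \notin N$ and produce $\by \in X^{[n]} \setminus X$. Fix two distinct symbols $0,1 \in \sA$ and set $\by := (10^{n-1})^\infty$, whose minimal period is exactly $n$; since $n \notin N$, $\by \notin X$. On the other hand, $\by = 1^\infty \pr 0^\infty \pr \cdots \pr 0^\infty$ (with $1^\infty$ in slot $0$ and $0^\infty$ in the remaining $n-1$ slots), and each constant sequence $a^\infty$ lies in $\psi_{i,n}(X_1) = X_1 \subseteq \psi_{i,n}(X)$ (using $X_1 \subseteq X$). Hence $\by \in X^{[n]}$, and so $n \notin \sN(X)$.

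The main technical hurdle is the period bound in the step $X^{[n]} \subseteq X$: one must convert the ``product of periods'' expression $n \cdot \lcm(P_0, \ldots, P_{n-1})$ governing the period of an $n$-fold interleaving into an $\lcm$ expression in elements of $N$. The identity $n \cdot (m_i/\gcd(m_i,n)) = \lcm(n,m_i)$ bridges this gap, after which the iterated $\lcm$-closure and divisor-closure of $N$ do the rest. It is precisely here that both closure conditions on $N$ become indispensable.
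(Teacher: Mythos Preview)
Your proposal is correct. For the forward direction it matches the paper's argument essentially verbatim (the paper proves (1) via the inclusion $X^{[d]} \subseteq X^{[n]}$ from Proposition~\ref{prop:cdc}(3) rather than via Theorem~\ref{thm:410}, but the two routes are equivalent one-liners).

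For the converse you in fact construct the \emph{same} set the paper does---the set of all periodic sequences whose period lies in $N$---but you present it differently. The paper starts from $X=\{0^\infty,1^\infty\}$ on a two-letter alphabet, notes that $X^{[n]}$ is exactly the set of sequences of period dividing $n$, and forms $\tilde X=\bigcup_j X^{[\ell_j]}$ with $\ell_j=\lcm(n_1,\dots,n_j)$; it then verifies $\sN(\tilde X)=N$ by appealing once more to the composition identity of Theorem~\ref{thm:410}. You instead define the set directly as $\bigcup_{m\in N}\{\bx:S^m\bx=\bx\}$ over the full alphabet and check $X^{[n]}\subseteq X$ by an explicit period computation, using $n\cdot m_i/\gcd(m_i,n)=\lcm(n,m_i)$ to show the interleaved sequence has period dividing $\lcm(n,m_0,\dots,m_{n-1})\in N$. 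Your route is slightly more elementary (it avoids re-invoking the interleaving-closure machinery), while the paper's route is a bit slicker because it reuses Theorem~\ref{thm:410} rather than redoing the arithmetic. One small cleanup: your ``First, $\psi_{i,n}(X)\subseteq X$'' observation is correct but is never actually used---the period bound you need is absorbed into the ``Second'' step.
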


This result is proved separately in the direct and converse  directions as Theorem \ref{thm:lcm-factorization2}
and Theorem \ref{thm:sublattices-are-attained2}, respectively. 
A nonempty structure $N$ having properties (1), (2) is abstractly characterized as
any nonempty subset of $\NN^{+}$ that is a sublattice under the divisibility partial order, 
which is also  downward closed
under divisibility, see \ref{thm:lcm-factorization2} (3). 
The notion of lattice  here is that of G. Birkhoff, see Gr\"{a}tzer \cite{Gratzer:11}.


 In Section \ref{sec:32a} we also  treat {\em self-interleaving factorizations}, which are interleaving  factorizations in which all
factors are identical.  
For a general set $X$ we define the {\em self-interleaving closure set} 
$$
\IC_{\self}(X):=\{n\in\NN:X=(\pr_n)_{i=0}^{n-1}Y\text{ for some }Y\subseteq\sA^{\NN}\}
$$
 as the set of $n$ such that $X$ has an $n$-fold self-interleaving factorization.
 Self-interleavings naturally occur in circumstances given in
 We show that  the structure of $\IC_{\self}(X)$ has
exactly  the same collection allowed structure as $\IC(X)$ in Theorem \ref{thm:lcm-factorization}; however 
for individual $X$ the set of values $ \IC_{\self}(X)$ can be strictly smaller than $\IC(X)$.

In Section \ref{sec:4}  we study infinitely factorizable sets $X$ in the special case that $X$
is a closed set.


\begin{thm}\label{thm:infinite_factor}
{\rm (Classification of infinitely factorizable closed $X$)} 
 For a closed set $X \subseteq \msrA^{\NN}$, where $\msrA$ is a finite alphabet,
the following properties are equivalent.
\begin{enumerate}
\item[(i)]
$X$ is infinitely factorizable; i.e., $\IC(X)$  is an infinite set.
\item[(ii)]
$X$ has an $n$-fold interleaving factorization for all $n \ge 1$, ie. $\IC(X) = \NN^{+}$. 
\item[(iii)]
For each $k \ge 0$  there are nonempty subsets $\msrA_k \subseteq \msrA$ such that $X = \prod_{k=0}^{\infty} \msrA_k$
is a countable  product of finite sets  with the product topology.
\end{enumerate}
\end{thm}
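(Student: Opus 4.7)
The plan is to prove the cycle (iii) $\Rightarrow$ (ii) $\Rightarrow$ (i) $\Rightarrow$ (iii). The implications (iii) $\Rightarrow$ (ii) and (ii) $\Rightarrow$ (i) are routine; the substance lies in (i) $\Rightarrow$ (iii), which must genuinely use both that $X$ is closed and that $\IC(X)$ is infinite.

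For (iii) $\Rightarrow$ (ii), given a product representation $X = \prod_{k \ge 0} \msrA_k$, I would define, for each $n \ge 1$ and each $0 \le i \le n-1$, the set $X_i := \prod_{j \ge 0} \msrA_{i+jn}$, observe directly from the definition of $n$-fold interleaving that $(\pr_n)_{i=0}^{n-1} X_i = X$, and invoke Theorem \ref{thm:DIF} to conclude $n \in \IC(X)$. Since $n$ is arbitrary, $\IC(X) = \NN^{+}$, and then (ii) $\Rightarrow$ (i) is immediate because $\NN^{+}$ is infinite.

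The heart of the argument is (i) $\Rightarrow$ (iii). Assume $\IC(X)$ is infinite, hence unbounded in $\NN^{+}$, and let $\msrA_k \subseteq \msrA$ denote the set of symbols appearing in coordinate $k$ of some element of $X$; the case $X = \emptyset$ is degenerate and handled separately, so I assume $X \ne \emptyset$, which makes each $\msrA_k$ nonempty. The inclusion $X \subseteq \prod_k \msrA_k$ is automatic, and the reverse inclusion is what needs proof. I would fix an arbitrary $\bx = (a_k)_{k \ge 0} \in \prod_k \msrA_k$ and, invoking closedness of $X$ in the symbol topology, reduce to producing, for each $N \ge 1$, some $\bz \in X$ whose first $N$ coordinates match those of $\bx$. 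The decisive observation is that the first symbol of $\psi_{i,n}(\bw)$ equals the $i$-th symbol of $\bw$, so the set of first symbols of elements of $\psi_{i,n}(X)$ is exactly $\msrA_i$. Choosing $n \in \IC(X)$ with $n \ge N$, I would select for $0 \le i \le N-1$ some $\by_i \in \psi_{i,n}(X)$ with first symbol $a_i$ (possible because $a_i \in \msrA_i$), and for $N \le i \le n-1$ any $\by_i \in \psi_{i,n}(X)$ (nonempty since $X$ is). Then, because $n \in \IC(X)$,
\[
\bz := (\pr_n)_{i=0}^{n-1} \by_i \;\in\; (\pr_n)_{i=0}^{n-1} \psi_{i,n}(X) \;=\; X^{[n]} \;=\; X,
\]
and the interleaving definition forces the $i$-th coordinate of $\bz$ to equal the first symbol of $\by_i$, which is $a_i$ for $0 \le i \le N-1$, yielding the required prefix match.

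The main obstacle I anticipate is conceptual rather than computational: recognizing that for this direction only the unboundedness of $\IC(X)$ is needed (the full divisibility/lcm lattice structure of Theorem \ref{thm:lcm-factorization} plays no role), and that closedness is precisely what upgrades arbitrarily long finite prefix approximations into genuine set membership. A secondary care point is the degenerate case $X = \emptyset$, which must be excluded or treated trivially since the product form in (iii) requires each $\msrA_k$ to be nonempty.
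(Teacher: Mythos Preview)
Your proof is correct and follows essentially the same route as the paper: both show that for $n$ sufficiently large, the $n$-fold interleaving $X^{[n]}$ contains elements realizing any prescribed initial block from $\prod_{i=0}^{n-1}\msrA_i$, with the paper phrasing (i)$\Rightarrow$(iii) as a contrapositive (a missing block forces $X^{[n]}\ne X$ for all large $n$) while you phrase it directly (every block in the product is attained in $X$ via some $X^{[n]}=X$). Your explicit treatment of the degenerate case $X=\emptyset$ is a care point the paper glosses over.
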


In view of Theorem \ref{thm:lcm-factorization}, the assumption that $X$ is closed is necessary for these three equivalences to hold. The important consequence for closed sets $X$ is that if they are infinitely factorizable then $\IC(X)= \NN^{+}$.

In Section \ref{sec:5} we study an iterated interleaving factorization process for a closed set $X$. If $X$ is infinitely factorizable, we ``freeze'' it. If it is finitely factorizable, we decompose it to its maximal factorization, and then repeat the process on each of these factors. We show by example that this factorization process can go to infinite depth.

 %
%
\subsection{Shift-stability and weak shift-stability}\label{subsubsec:132}

 We consider several classes of sets $X$ having different  transformation properties  under the shift action.

\begin{defn}  \label{defn:weakshift} (Shift-invariance, shift-stability, weak shift-stability)  

(1) A set $X\subseteq \sA^{\NN}$ is {\em shift-invariant } if $SX = X$.

(2)  A set $X$ is  {\em shift-stable}  if $SX \subseteq X$. 

(3) A  set $X$ is {\em weakly shift-invariant}   if there are integers  $k > j \ge 0$ such that $S^k X = S^j X$.

(4) A set $X$ is {\em weakly shift-stable}  if there are integers  $k > j \ge 0$ such that $S^k X \subseteq S^j X$.
\end{defn} 
These definitions do not require the set  $X$ to be closed in the symbol topology.

  In Section \ref{sec:30} we show  consequences of these properties. We show 
 that for shift-invariant sets, all interleaving factorizations are self-interleaving factorizations; 
 that is, if $X$ is shift-invariant, then $\IC(X)=\IC_{\self}(X)$. 
We  show that closed shift-stable sets have a forbidden blocks characterization paralleling the
two-sided shift case.  
 Example \ref{exmp:descending_chain} constructs a closed set $X$ having
  an  infinite, strictly descending chain of sets under the shift operation.

An  important property 
 introduced  here  is weak shift-stability.
The usefulness of this property is  that   the class $\sW(\sA)$ of all weakly shift-stable sets on  a finite alphabet $\sA$ is closed under all decimation,  interleaving and shift operations. This is not the case for  properties (1)-(3) above.

\begin{thm}\label{thm:operation-closure}
 Let $\sA $ be finite alphabet and let $X \subseteq \sA^{\NN}$ be a general set.

(1) If $X$ is weakly shift-stable, then all decimations $\psi_{j,n}(X)$ for $j \ge 0$, $n \ge 1$ 
are weakly shift-stable.

(2) If $X_0, X_1, \cdots , X_{n-1}$ are weakly shift-stable, then their $n$-fold interleaving 
$Y := (\pr_n)_{i=0}^{n-1} X_i$ is also weakly shift-stable.

(3) If $X$ is weakly shift-stable, then 
its $n$-fold interleaving closure $X^{[n]}$ is weakly shift-stable for each $n \ge 1$. 
\end{thm}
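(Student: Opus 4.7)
The plan is to prove the three parts in order, leveraging the commutation identities between the shift operator and the decimation and interleaving operators recorded earlier in the excerpt, together with the evident monotonicity of $\psi_{i,n}$ and $\pr_n$ under set inclusion.

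For part (1), I start by iterating the hypothesis. Assume $S^k X \subseteq S^j X$ with $k > j \ge 0$, and set $d := k-j$. Applying $S$ to both sides repeatedly yields the descending-chain property $S^{m+d} X \subseteq S^m X$ for every $m \ge j$, and hence $S^{m + rd} X \subseteq S^m X$ for every $r \ge 0$ and every $m \ge j$. Choose $b$ so large that $bn \ge j$ and take $r = n$; this produces the containment $S^{(b+d)n} X \subseteq S^{bn} X$, whose exponents are both multiples of $n$. Iterating the identity $S\,\psi_{i,n}(Y) = \psi_{i,n}(S^n Y)$ gives $S^a \psi_{i,n}(X) = \psi_{i,n}(S^{an} X)$ for every $a \ge 0$, so by monotonicity of $\psi_{i,n}$,
\[
S^{b+d}\psi_{i,n}(X) \;=\; \psi_{i,n}(S^{(b+d)n} X) \;\subseteq\; \psi_{i,n}(S^{bn} X) \;=\; S^b\, \psi_{i,n}(X),
\]
which is the required weak shift-stability of $\psi_{i,n}(X)$.

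For part (2), the key input is that $S^n$ acts diagonally on an $n$-fold interleaving: directly from the pointwise definition of $\pr_n$, shifting by $n$ positions peels off one complete block of $n$ interleaved symbols, so
\[
S^n\bigl((\pr_n)_{i=0}^{n-1} X_i\bigr) \;=\; (\pr_n)_{i=0}^{n-1} S(X_i),
\]
and iteration yields $S^{nm}(Y) = (\pr_n)_{i=0}^{n-1} S^m(X_i)$ for every $m \ge 0$. For each $i$, pick $k_i > j_i \ge 0$ witnessing weak shift-stability of $X_i$ and set $d_i := k_i - j_i$; as in (1), this gives $S^{m + d_i} X_i \subseteq S^m X_i$ for all $m \ge j_i$. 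Let $J_0 := \max_i j_i$ and let $D$ be any common multiple of $d_0, \ldots, d_{n-1}$. Then $S^{J_0 + D} X_i \subseteq S^{J_0} X_i$ simultaneously for every $i$, and since interleaving is monotonic in each factor,
\[
S^{n(J_0+D)} Y \;=\; (\pr_n)_{i=0}^{n-1} S^{J_0 + D} X_i \;\subseteq\; (\pr_n)_{i=0}^{n-1} S^{J_0} X_i \;=\; S^{n J_0} Y,
\]
so $Y$ is weakly shift-stable.

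Part (3) is an immediate consequence of parts (1) and (2), since $X^{[n]} = (\pr_n)_{i=0}^{n-1}\psi_{i,n}(X)$ by definition: (1) shows each $\psi_{i,n}(X)$ is weakly shift-stable, and (2) applied to these factors shows $X^{[n]}$ is as well. Alternatively, the identity $S X^{[n]} = (SX)^{[n]}$ to be recorded in Section~\ref{sec:26a}, combined with the monotonicity of $X \mapsto X^{[n]}$, yields the same conclusion in one line. No step of the argument presents a serious obstacle; the only point requiring attention is the divisibility bookkeeping in (1) and (2), where one must arrange the exponents in the containment $S^k X \subseteq S^j X$ to both become multiples of $n$ before commuting the shift past decimation or interleaving.
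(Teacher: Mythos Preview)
Your proof is correct and follows essentially the same route as the paper for parts (1) and (2): iterate the weak shift-stability hypothesis to a descending-chain relation, arrange exponents so that both sides are multiples of $n$, and then commute powers of $S$ past $\psi_{i,n}$ (resp.\ $\pr_n$) via the identities of Propositions~\ref{prop:decimation_shift} and~\ref{prop:interleaving-shift}. For part (3) the paper instead gives a standalone direct computation of $S^{\ell n} X^{[n]}$, whereas you more economically observe that (3) is immediate from (1) and (2) applied to the factors $\psi_{i,n}(X)$; your route is tidier.
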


 A parallel result holds for the class $\overline{\sW}(\sA)$
 of all closed weakly shift-stable sets on the finite alphabet $\sA$.  This
 latter class  of sets includes the path sets studied in \cite{AL14a},
 as shown in \cite{ALS20}. 

 %
%
\subsection{Entropy of interleavings}\label{subsubsec:134}

In Section \ref{sec:entropy} we study two notions of entropy for general sets $X$.
\begin{defn}\label{defn:217} 
{\rm (Topological entropy)} 
The topological entropy $H_{\topp}(X)$ is given by
\[ 
H_{\topp}(X) := \limsup_{k \to \infty} \frac{1}{k} \log N_k(X)
\]
where $N_k(X)$ counts the number of distinct blocks of length $k$ to be found across all words $\bx \in X$.
\end{defn} 
The topological entropy  is  defined here as a limsup, however the limit always exists, as a consequence of a submultiplicativity
property of block counting functions $N_{k_1+ k_2}(X) \le N_{k-1}(X) N_{k_2}(X)$
of  $N_k(X)$, see \cite[Property 8]{AKM65}.  Here $\log$ denotes the natural logarithm;  in information theory
$\log_2$ is used instead.  

\begin{defn}\label{defn:218}
{\rm (Prefix entropy and stable prefix entropy)} 

 (1) The   \emph{prefix entropy}  (or {\em path topological entropy} of $H_p(X)$ of a general set $X$ ,
  is defined by
\begin{equation}\label{eqn:pathentropy1} 
H_p(X) := \limsup_{k \rightarrow \infty} \frac{1}{k} \log N_k^I(X),
\end{equation}
where $N_k^I(X)$ counts the number of distinct prefix blocks.
 $b_0b_1\cdots b_{k-1}$ of length $k$ found 
across  all words $\bx \in X$.

 (2) The limit in \eqref{eqn:pathentropy1} does not always exist, and
we say that $X$ has {\em stable prefix entropy} if the
limit does exist:
\begin{equation}\label{eqn:pathentropy2} 
H_p(X) := \lim_{k \rightarrow \infty} \frac{1}{k} \log N_k^I(X),
\end{equation}
\end{defn} 

The prefix entropy  was introduced in \cite{AL14a} under the name   {\em path topological entropy}  for a
class of sets called {\em path sets}.
In that paper symbol sequences  were labels attached to paths of edges in a directed labeled graph.
Prefix blocks were termed there {\em initial blocks} (for path sets) i
because they represented the initial steps along a path in a directed labeled graph defining the path set.
Since $N_k^{I}(X) \le N_k(X)$ we always have  $H_p(X) \le H_{\topp}(X)$, and strict inequality may hold.

In Section \ref{subsec:entropy2} we show the shift operator preserves both entropies.
Decimation operations need not preserve entropy
and  Section \ref{subsec:entropy-decimation} gives inequalities 
such entropies  must satisfy.
In Section \ref{subsec:prefix_entropy_bounds} we establish an inequality for prefix entropy 
of interleavings of general sets,

\begin{thm}\label{thm:entthm1} {\rm (Prefix entropy bound under interleaving)}
Let $X_0, X_1, \cdots, X_{n-1}$ be arbitrary subsets of $\sA^{\NN}$. 
The prefix
 entropy of the  set $X = X_0 \pr X_1 \pr \cdots \pr X_{n-1}$  is 
bounded above by the arithmetic mean of the prefix 
entropies of $X_0, \ldots, X_{n-1}$. That is:
\begin{equation}\label{eqn:prefix-ent-ineq1}
H_p(X_0 \pr \cdots \pr  X_{n-1})
\le \frac{1}{n} \sum_{i=0}^{n-1} H_p(X_i).
\end{equation} 
\end{thm}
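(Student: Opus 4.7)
The plan is to count prefix blocks of the interleaved set $X = X_0 \pr \cdots \pr X_{n-1}$ in terms of prefix blocks of the factors $X_i$, obtain a multiplicative bound at the block-counting level, and then pass to logarithms and limsups.

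\textbf{Step 1 (Block-counting bound).} First I would decode the positions of the interleaving. Using the defining relation $b_{i+jn}=a_{i,j}$, a length-$k$ prefix of $\bx_0\pr\bx_1\pr\cdots\pr\bx_{n-1}$ depends exactly on those symbols $a_{i,j}$ with $i+jn\le k-1$. Writing $k=qn+r$ with $0\le r<n$, this says we see $a_{i,0},\dots,a_{i,q}$ for $0\le i\le r-1$ (so the length-$(q+1)$ prefix of $\bx_i$) and $a_{i,0},\dots,a_{i,q-1}$ for $r\le i\le n-1$ (so the length-$q$ prefix of $\bx_i$). Hence each length-$k$ prefix of an element of $X$ is determined by a choice of length-$(q+1)$ prefix from each of $X_0,\dots,X_{r-1}$ and a length-$q$ prefix from each of $X_r,\dots,X_{n-1}$, giving
\begin{equation*}
N_k^I(X)\ \le\ \prod_{i=0}^{r-1} N_{q+1}^I(X_i)\ \cdot\ \prod_{i=r}^{n-1} N_q^I(X_i).
\end{equation*}
(If some $X_i$ is empty, then $X$ is empty and the inequality \eqref{eqn:prefix-ent-ineq1} is trivial, so I may assume every $X_i\neq\emptyset$, whence $N_k^I(X_i)\ge 1$ and $H_p(X_i)\in[0,\log|\sA|]$.)

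\textbf{Step 2 (Pass to entropies).} Take logarithms and divide by $k$:
\begin{equation*}
\frac{1}{k}\log N_k^I(X)\ \le\ \sum_{i=0}^{r-1}\frac{q+1}{k}\cdot\frac{1}{q+1}\log N_{q+1}^I(X_i)\ +\ \sum_{i=r}^{n-1}\frac{q}{k}\cdot\frac{1}{q}\log N_q^I(X_i).
\end{equation*}
Fix $\epsilon>0$. By the definition of $H_p(X_i)$ as a limsup, for each $i$ there is $Q_i$ with $\tfrac{1}{m}\log N_m^I(X_i)\le H_p(X_i)+\epsilon$ for all $m\ge Q_i$; let $Q=\max_iQ_i$. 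For $k=qn+r$ with $q\ge Q$ (so that both $q$ and $q+1$ exceed $Q_i$ for every $i$), the displayed bound becomes
\begin{equation*}
\frac{1}{k}\log N_k^I(X)\ \le\ \sum_{i=0}^{r-1}\frac{q+1}{k}\bigl(H_p(X_i)+\epsilon\bigr)\ +\ \sum_{i=r}^{n-1}\frac{q}{k}\bigl(H_p(X_i)+\epsilon\bigr).
\end{equation*}

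\textbf{Step 3 (Take the limsup).} As $k\to\infty$ we have $q\to\infty$, and the coefficients $\tfrac{q+1}{k}$ and $\tfrac{q}{k}$ both tend to $\tfrac{1}{n}$ (uniformly in the residue $r$, which lies in the finite set $\{0,\dots,n-1\}$). Hence
\begin{equation*}
\limsup_{k\to\infty}\frac{1}{k}\log N_k^I(X)\ \le\ \frac{1}{n}\sum_{i=0}^{n-1}\bigl(H_p(X_i)+\epsilon\bigr)\ =\ \frac{1}{n}\sum_{i=0}^{n-1}H_p(X_i)+\epsilon.
\end{equation*}
Letting $\epsilon\downarrow 0$ gives the desired inequality \eqref{eqn:prefix-ent-ineq1}.

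\textbf{Expected obstacle.} The only real delicacy is the limsup bookkeeping in Step 3: since $H_p$ is only a limsup (not a limit) in general, one cannot simply pass to limits term by term. The standard $\epsilon$-argument above, using a uniform threshold $Q=\max_iQ_i$ valid for all $i$ and for both lengths $q$ and $q+1$, handles this cleanly. No hypothesis of stable prefix entropy, shift-stability, or closedness on the $X_i$ is needed, which is consistent with the theorem being stated for arbitrary subsets of $\sA^{\NN}$.
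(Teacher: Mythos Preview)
Your proof is correct and follows essentially the same approach as the paper: both establish the product bound $N_{nk+j}^I(X)\le\prod_{i<j}N_{k+1}^I(X_i)\prod_{i\ge j}N_k^I(X_i)$ and then pass to the limsup using subadditivity. The only cosmetic difference is that the paper splits the limsup over residue classes $j$ and invokes $\limsup\sum\le\sum\limsup$ directly (together with the bound $N_{k+1}^I\le|\sA|\,N_k^I$ to collapse $q$ and $q+1$), whereas you handle all $k$ at once via an $\epsilon$-threshold argument and let the coefficients $\tfrac{q}{k},\tfrac{q+1}{k}\to\tfrac{1}{n}$; these are equivalent.
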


Example \ref{exmp:910} shows that strict inequality  in \eqref{eqn:prefix-ent-ineq1} may occur.

In  Section \ref{subsec:stable_prefix_entropy_bounds} we
 show that  the assumption of   stable prefix entropy for each of the sets $X_0, X_1, ..., X_{n-1}$ implies equality in this formula.
and that the $n$-fold interleaving $X = X_0 \pr \cdots \pr X_{n-1}$ itself has stable prefix entropy.


\begin{thm}\label{thm:entthm-gen} {\rm (Stable prefix entropy interleaving formula)}
If  each of the sets $X_0, X_1, ... , X_{n-1}$ has  stable prefix entropy,
then the $n$-fold interleaving  $X = X_0 \pr X_1 \pr \cdots \pr X_{n-1}$ also  has stable prefix 
 entropy. In addition  
\begin{equation}\label{eqn:prefix-ent-eq2}
H_p(X)
= \frac{1}{n} \sum_{i=0}^{n-1} H_p(X_i).
\end{equation}
\end{thm}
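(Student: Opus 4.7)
The plan is to derive an exact product formula for the prefix counts $N_k^I(X)$ in terms of those of the interleaving factors and then evaluate the resulting limit directly using the stability hypothesis. The limsup upper bound of Theorem~\ref{thm:entthm1} is already available; what remains is to show that the limit exists and equals the claimed value on the nose.

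First I would write $k = qn + r$ with $q = \lfloor k/n \rfloor \ge 0$ and $0 \le r \le n-1$, and establish the combinatorial identity
\begin{equation}\label{eqn:plan-key}
N_k^I(X) \;=\; \prod_{i=0}^{r-1} N_{q+1}^I(X_i) \,\cdot\, \prod_{i=r}^{n-1} N_q^I(X_i).
\end{equation}
This follows directly from Definition~\ref{interleavingdef2}: a length-$k$ prefix $b_0 b_1 \cdots b_{k-1}$ of an element $\by = \bx_0 \pr \bx_1 \pr \cdots \pr \bx_{n-1}$ of $X$ satisfies $b_{jn+i} = a_{i,j}$, and among the positions $0, 1, \ldots, k-1$ there are exactly $q+1$ with residue $i \pmod n$ when $i < r$ and exactly $q$ when $i \ge r$. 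Hence a length-$k$ prefix of $X$ is in bijection with an $n$-tuple consisting of a length-$(q+1)$ prefix of $X_i$ for $i<r$ and a length-$q$ prefix of $X_i$ for $i\ge r$; because $X$ is the full set of interleavings of elements of the $X_i$'s, every such tuple is realized and distinct tuples give distinct prefixes, proving \eqref{eqn:plan-key}.

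Next, take logarithms of \eqref{eqn:plan-key} and divide by $k$. Setting $\varepsilon_m^{(i)} := \tfrac{1}{m}\log N_m^I(X_i) - H_p(X_i)$, the stable prefix entropy hypothesis says $\varepsilon_m^{(i)} \to 0$ as $m \to \infty$ for each $i$. Substituting $\log N_m^I(X_i) = m H_p(X_i) + m \varepsilon_m^{(i)}$ yields
\[
\frac{1}{k}\log N_k^I(X) \;=\; \frac{q}{qn+r}\sum_{i=0}^{n-1} H_p(X_i) \;+\; \frac{1}{qn+r}\sum_{i=0}^{r-1} H_p(X_i) \;+\; E_k,
\]
where $E_k := \tfrac{q+1}{qn+r}\sum_{i=0}^{r-1}\varepsilon_{q+1}^{(i)} + \tfrac{q}{qn+r}\sum_{i=r}^{n-1}\varepsilon_q^{(i)}$ has coefficients bounded in $[0,1]$ and is expressed through $\varepsilon_m^{(i)}$ with $m \in \{q, q+1\}$ only.

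Finally, as $k \to \infty$, $q = \lfloor k/n \rfloor \to \infty$ while $r$ remains in the finite set $\{0, \ldots, n-1\}$, so $\tfrac{q}{qn+r} \to \tfrac{1}{n}$, the middle term is $O(1/k)$, and $E_k \to 0$. The right-hand side converges to $\tfrac{1}{n}\sum_{i=0}^{n-1} H_p(X_i)$, which simultaneously establishes that $X$ has stable prefix entropy in the sense of Definition~\ref{defn:218}(2) and identifies its value \eqref{eqn:prefix-ent-eq2}; this value matches the limsup bound from Theorem~\ref{thm:entthm1}, so equality is attained. The main point requiring care is the exact identity \eqref{eqn:plan-key}: verifying that the interleaving structure introduces no over- or under-counting of prefixes is precisely what turns the general limsup inequality into an exact Ces\`{a}ro-type average.
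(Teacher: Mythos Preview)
Your proof is correct and follows essentially the same approach as the paper: both rest on the product identity $N_{qn+r}^I(X)=\prod_{i<r}N_{q+1}^I(X_i)\prod_{i\ge r}N_q^I(X_i)$ (which the paper already used in proving Theorem~\ref{thm:entthm1}) together with the stability hypothesis to pass to the limit. The only cosmetic difference is that the paper argues by sandwiching---it quotes the $\limsup$ bound from Theorem~\ref{thm:entthm1} and then proves a matching $\liminf$ bound via $\liminf\sum\ge\sum\liminf$---whereas you compute the limit directly with your error terms $\varepsilon_m^{(i)}$; the content is the same.
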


In contrast  to this result for interleaving, decimations  of  a set   $X$ having stable prefix entropy need not
 have stable prefix entropy, see Remark \ref{rem:99}.

We also  deduce  in Section \ref{subsec:stable_prefix_entropy_bounds}
that  all weakly shift-stable sets $X$  have 
good entropy properties.

\begin{thm}\label{thm:semistable_prefix_entropy0}
{\rm (Weak shift-stability implies stable prefix entropy)}  
If $X$ is weakly shift-stable, then $X$ has stable prefix  entropy.
 and  in addition $H_p(X) = H_{\topp}(X)$. 
 Consequently $n$-fold
 interleaving of weakly-shift stable sets $X_i$ has
 \begin{equation}\label{eqn:prefix-ent-eq3}
H_{\topp}(X)
= \frac{1}{n} \sum_{i=0}^{n-1} H_{\topp}(X_i).
 \end{equation} 
\end{thm}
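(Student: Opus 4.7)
The plan is to reduce the problem to the shift-stable case by replacing $X$ with its forward orbit $\widetilde{X} := \bigcup_{i\ge 0} S^i X$, and then show that the weak shift-stability hypothesis forces this orbit to be only a finite union, which keeps the block-counting functions $N_m(X)$ and $N_m^I(X)$ comparable up to a multiplicative constant and a bounded index shift. First I would fix $k>j\ge 0$ with $S^kX\subseteq S^jX$ and prove, by induction on $i$, that $S^{k+\ell}X\subseteq S^{j+\ell}X$ for every $\ell\ge 0$; this gives $\widetilde X=\bigcup_{i=0}^{k-1}S^iX$, so in particular $\widetilde X$ is shift-stable ($S\widetilde X\subseteq\widetilde X$).

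Next I would use two block-counting identities that follow directly from the definitions: $N_m(X)=N_m^I(\widetilde X)$ (every length-$m$ block in $X$ is a prefix of some shift of a word of $X$), and for a shift-stable set $N_m^I(\widetilde X)=N_m(\widetilde X)$. Combining gives $N_m(X)=N_m(\widetilde X)$, so $H_{\topp}(X)=H_{\topp}(\widetilde X)$. The crucial estimate is the upper bound
\begin{equation*}
N_m^I(\widetilde X)\;\le\;\sum_{i=0}^{k-1} N_m^I(S^iX)\;\le\; k\cdot N_{m+k-1}^I(X),
\end{equation*}
where the second inequality holds because any length-$m$ prefix block of $S^iX$ is the suffix of a length-$(m+i)$ prefix block of $X$ (so the number of the former is bounded by the number of the latter, which for $i\le k-1$ is at most $N_{m+k-1}^I(X)$). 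Combining with $N_m(X)=N_m^I(\widetilde X)$ yields $N_m(X)\le k\cdot N_{m+k-1}^I(X)$, while $N_m^I(X)\le N_m(X)$ is automatic.

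Taking $\tfrac1m\log$ of both sides and letting $m\to\infty$, the constant $k$ and the bounded shift $k-1$ disappear in the limit, giving
\begin{equation*}
H_{\topp}(X)\;\le\;\liminf_{m\to\infty}\tfrac{1}{m}\log N_m^I(X)\;\le\;\limsup_{m\to\infty}\tfrac{1}{m}\log N_m^I(X)\;\le\;H_{\topp}(X).
\end{equation*}
Thus the limit defining $H_p(X)$ exists, $X$ has stable prefix entropy, and $H_p(X)=H_{\topp}(X)$. For the consequence, suppose $X=(\pr_n)_{i=0}^{n-1}X_i$ with each $X_i$ weakly shift-stable. By Theorem \ref{thm:operation-closure}, $X$ is weakly shift-stable as well, so the first part of the theorem gives $H_p(X)=H_{\topp}(X)$ and $H_p(X_i)=H_{\topp}(X_i)$ for each $i$, with all prefix entropies stable. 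Then Theorem \ref{thm:entthm-gen} applies to give $H_p(X)=\tfrac1n\sum_{i=0}^{n-1}H_p(X_i)$, and substituting the $H_p=H_{\topp}$ identities yields the stated formula \eqref{eqn:prefix-ent-eq3}.

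The main obstacle is the middle inequality $N_m^I(S^iX)\le N_{m+k-1}^I(X)$: one has to be careful to argue in terms of distinct prefix blocks (not just individual words) and note that distinct length-$m$ prefix blocks of $S^iX$ come from distinct length-$(m+i)$ prefix blocks of $X$, even though the map from the latter to the former need not be injective in the other direction. Everything else is bookkeeping with the finite-orbit reduction and invocation of the previously established entropy results.
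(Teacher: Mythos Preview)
Your proof is correct and follows essentially the same approach as the paper: both use weak shift-stability to show that every length-$m$ block of $X$ is a prefix of one of finitely many shifts $S^iX$ with $i$ bounded, yielding an inequality of the form $N_m(X)\le C\cdot N_{m+O(1)}^I(X)$, after which the limit argument and the invocation of Theorems~\ref{thm:operation-closure} and~\ref{thm:entthm-gen} for the consequence are identical. Your explicit introduction of the orbit set $\widetilde X=\bigcup_{i\ge 0}S^iX$ and the observation that it is a finite union is a clean repackaging of what the paper does inline, but the underlying counting is the same.
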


Finally we observe that since  all  decimations of weakly shift-stable sets are weakly shift-stable, 
they will have   stable prefix entropy.

\subsection{Composition of interleavings  and operad structure}\label{subsubsec:136} 

In Section \ref{sec:5} 
we consider factorizations of a set $X$ under iterated composition. 
We give examples of sets $X$ having  iterated factorizations going to
infinite depth. This behavior differs from interleaving restricted to  the class of all path sets on the 
finite alphabet $\sA$, as we show in \cite{ALS20} that the iterated factorization
of any path set terminates at some finite depth.

Abstractly, the   family of operations obtained under iterated composition
 from interleaving operations of all arities  determines a
 \emph{non-symmetric operad} (also called a  \emph{non-$\Sigma$ operad}) in the sense of May \cite{May97},
see also Markl et al. \cite[Part I, Sect. 1.3]{MSS02} and Markl \cite[Sect. 1]{Markl08}. 
Non-symmetric operads arise in many combinatorial constructions, see work  of Giraudo \cite{Giraudo17},  \cite{Giraudo18}.  
Iterated interleaving operations satisfy nontrivial universal identities under composition, 
examples being the shuffle identities given in Theorem \ref{thm:shuffle}. 
These identities show that certain  nested compositions of interleaving operations give equivalent operations.
 However most nestings of compositions yield  distinct operations.  In particular,
 interleaving operations  do not satisfy the associative law when acting on collections of subsets $X$ of 
$\mathcal{A}^\mathbb{N}$.
For instance, each of the  $3$-ary operations
$X_0 \pr X_1 \pr X_2$ and $X_0 \pr (X_1 \pr X_2)$ and $(X_0 \pr X_1) \pr X_2$ are distinct.

Operads in general are characterized as (universal) algebraic objects  satisfying a given set of universal identities. 
We shall consider the {\em interleaving non-symmetric operad} to be the  non-symmetric operad whose universal identities are all 
the identities satisfied on the collection of all sets $X \subseteq \sA^{\NN}$ with alphabet size $|\sA| =2$. 
These universal identities include the shuffle identities in Theorem \ref{thm:shuffle}.
This  set of identities  may be  a generating set for all universal  identities for this operad; we leave it 
as an open question to determine a generating set.

In  Appendix \ref{sec:operadsec} we provide  details checking the   operad structure associated to interleaving.

%
\subsection{Contents of paper}\label{sec:124}

The contents of the remainder of the paper are 
as follows:

Section ~\ref{sec:decimation} relates  decimation operations and shows
these operations are closed under composition and under the shift operator.

 Section ~\ref{sec:factorization2A} studies interleaving operations and the interleaving closure operation $X\to X^{[n]}$ for general sets $X\subseteq\sA^{\NN}$, proving Theorem \ref{thm:DIF} and the shuffle identities.

Section \ref{sec:3} establishes  divisibility properties of $n$-fold factorizations of a closed set $X$.

Section ~\ref{sec:4} classifies infinitely factorizable closed sets $X$.
These sets have more restricted factorizations than for non-closed sets.

 Section \ref{sec:5} studies iterated interleaving factorizations of closed sets $X$.
 It shows by example that such iterated factorizations can continue to infinite depth.

Section \ref{sec:30} studies  shift-stability and weak shift-stability of sets $X \subseteq \sA^{\NN}$.
It gives a forbidden-blocks characterization of shift-stable closed sets. 
It shows that  the class of weakly shift-stable sets is closed under all decimation and interleaving operations.

Section ~\ref{sec:entropy} defines and discusses topological entropy and prefix (topological) entropy, proving Theorems \ref{thm:entthm1}, 
through \ref{thm:semistable_prefix_entropy0}.

 Section \ref{sec:concluding}  discusses further directions for research.

 Appendix \ref{sec:operadsec} studies an operad structure generated by interleaving operations.

\medskip

%
%
\section{Decimations of arbitrary subsets of $\sA^{\NN}$} \label{sec:decimation}

This section studies  decimations and interleaving for  subsets $X \subseteq \sA^{\NN}$.
All results in this section apply to arbitrary subsets $X$ of $\sA^{\NN}$.

%
%
\subsection{Compositions of decimations  } \label{subsec:31a}

The set of all decimation operators is closed under composition of operators.
This composition   action is a representation 
of the  discrete $ax+b$  semigroup given by the nonnegative
 integer matrices $\left[  \begin{smallmatrix} a & b \\ 0 & 1\end{smallmatrix} \right]$ with $ a \ge 1$ and  $b \ge 0$.

\begin{prop}\label{prop:decimation_shuffle}
{\rm (Composition  of decimations)} 
Let $X \subseteq \sA^{\NN}$ be an arbitrary set.
 For all $j, k \ge 0$ and $m, n \ge 1$ we have 
\begin{equation}\label{eqn:compose-decimations} 
\psi_{j, m} \circ \psi_{k, n} (X) :=  \psi_{j, m} (\psi_{k, n} (X)) = \psi_{jn+k, mn}(X) 
\end{equation}
\end{prop}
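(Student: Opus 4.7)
The plan is to reduce the set-level identity to the pointwise identity on individual sequences $\bx \in \sA^{\NN}$, since both operators $\psi_{k,n}$ and $\psi_{j,m}$ are defined on sets by taking unions of their action on individual sequences. Concretely, for any $X \subseteq \sA^{\NN}$,
\[
\psi_{j,m}(\psi_{k,n}(X)) \;=\; \{\psi_{j,m}(\psi_{k,n}(\bx)) : \bx \in X\},
\]
so it suffices to prove $\psi_{j,m}(\psi_{k,n}(\bx)) = \psi_{jn+k, mn}(\bx)$ for every individual sequence $\bx = a_0 a_1 a_2 \cdots$.

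For the pointwise identity, I would simply track indices. Set $\by = \psi_{k,n}(\bx)$, so that $\by = b_0 b_1 b_2 \cdots$ with $b_\ell = a_{k + \ell n}$ for $\ell \ge 0$. Applying $\psi_{j,m}$ to $\by$ extracts the subsequence whose $\ell$-th entry is $b_{j + \ell m}$. Substituting, this entry equals $a_{k + (j + \ell m) n} = a_{(jn+k) + \ell (mn)}$. Comparing with the definition, this is precisely the $\ell$-th entry of $\psi_{jn+k, mn}(\bx)$, which establishes the pointwise identity.

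The set-level identity then follows by taking the union over $\bx \in X$ on both sides. There is no real obstacle in this argument; the only thing to be careful about is the order of indices in the nested composition, specifically that the ``inner'' parameters $(k,n)$ appear combined as $jn+k$ and $mn$ rather than $jm+k$ or similar, which is a direct consequence of the step size of the outer decimation $\psi_{j,m}$ being $m$ steps in an already-thinned sequence of stride $n$. Since the proof reduces to an elementary arithmetic-progression identity on indices, I would present it as a short one-line index computation followed by the immediate set-union extension.
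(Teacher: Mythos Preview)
Your proposal is correct and takes essentially the same approach as the paper: both reduce to the pointwise identity on a single sequence $\bx$, set $\by=\psi_{k,n}(\bx)$ with entries $b_\ell = a_{k+\ell n}$, and then track indices through the outer decimation to obtain $a_{(jn+k)+\ell(mn)}$, which is $\psi_{jn+k,mn}(\bx)$. The paper's proof is just a slightly terser version of exactly this computation.
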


\begin{proof} 
The result is verified separately  for each  element  $\bx= x_0x_1x_2 \cdots \in X$
We set $\by:=\psi_{k, n} (\bx) = x_k x_{k+n}x_{k+2n}x_{k+3n}  \cdots$ where $\by= y_0y_1y_2 \cdots$ has $y_j= x_{k+jn}$. Now
\begin{eqnarray*} 
\psi_{j, m} \circ \psi_{k, n}(\bx) & = & \psi_{j, m} (\by) = y_j y_{j+m} y_{j+2m} y_{j+3m} \cdots \\
& = & x_{k+jn} x_{k+ jn + mn} x_{k+jn + 2mn} \ldots,\\
&=&  \psi_{jn+k, mn}(\bx), 
\end{eqnarray*}
as asserted.
\end{proof}

%
%
\subsection{Decimations and the shift } \label{subsec:32a}

The  decimation operations also transform nicely under the one-sided shift $S(a_0a_1a_2...) = a_1a_2a_3 \cdots$.

\begin{prop}\label{prop:decimation_shift}
{\rm (Shift of decimations)} 
Let $X \subseteq \sA^{\NN}$ be an arbitrary set.

(1) For all $j \ge 0$ and $m \ge 1$ the one-sided shift $S$ acts as
\begin{equation}\label{eqn:shift_decimation1}
\psi_{j, m}(SX) =\psi_{j+1, m} (X).
\end{equation}

(2) In addition 
\begin{equation} \label{eqn:shift_decimation2}
 S(\psi_{j,m} (X))= \psi_{j+m,m}(X) =  \psi_{j, m} (S^m X)  .
\end{equation} 
\end{prop}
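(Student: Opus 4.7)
The plan is to reduce everything to the composition formula already proved in Proposition \ref{prop:decimation_shuffle}. The key observation is that both the shift $S$ and its iterates can be expressed as decimations: writing out the definition, for any $\bx = a_0 a_1 a_2 \cdots$ and any $i \ge 0$,
\[
\psi_{i,1}(\bx) = a_i a_{i+1} a_{i+2} \cdots = S^i(\bx),
\]
so at the operator level $S = \psi_{1,1}$ and $S^m = \psi_{m,1}$ (and the latter identity also extends to sets $X \subseteq \sA^{\NN}$ by taking unions). Once this is in place, both identities follow as instances of $\psi_{j,m} \circ \psi_{k,n}(X) = \psi_{jn+k,\,mn}(X)$.

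Specifically, for part (1) I would write
\[
\psi_{j,m}(SX) \;=\; \psi_{j,m}\bigl(\psi_{1,1}(X)\bigr) \;=\; \psi_{j\cdot 1 + 1,\; m \cdot 1}(X) \;=\; \psi_{j+1,m}(X),
\]
using Proposition \ref{prop:decimation_shuffle} with $(k,n)=(1,1)$. For part (2) I would first compute
\[
S\bigl(\psi_{j,m}(X)\bigr) \;=\; \psi_{1,1}\bigl(\psi_{j,m}(X)\bigr) \;=\; \psi_{1\cdot m + j,\; 1 \cdot m}(X) \;=\; \psi_{j+m,m}(X),
\]
and then, using $S^m X = \psi_{m,1}(X)$,
\[
\psi_{j,m}(S^m X) \;=\; \psi_{j,m}\bigl(\psi_{m,1}(X)\bigr) \;=\; \psi_{j \cdot 1 + m,\; m \cdot 1}(X) \;=\; \psi_{j+m,m}(X),
\]
which chains together the two stated equalities in \eqref{eqn:shift_decimation2}.

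As a sanity check I would also verify the two identities directly at the pointwise level on a symbol sequence $\bx = a_0 a_1 a_2 \cdots$: for (1), $\psi_{j,m}(S\bx)$ picks out the entries indexed $j, j+m, j+2m, \ldots$ of $S\bx$, which are $a_{j+1}, a_{j+1+m}, a_{j+1+2m},\ldots$, namely $\psi_{j+1,m}(\bx)$; for (2), both $S(\psi_{j,m}(\bx))$ and $\psi_{j,m}(S^m\bx)$ equal $a_{j+m} a_{j+2m} a_{j+3m}\cdots$, which is $\psi_{j+m,m}(\bx)$. Taking the union over $\bx \in X$ extends the pointwise equalities to set equalities.

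No real obstacle is expected; the only thing to be careful about is the mild translation between $S^i$ and $\psi_{i,1}$, and the fact that Proposition \ref{prop:decimation_shuffle} already does the combinatorial bookkeeping once and for all, so that both parts of this proposition become one-line corollaries rather than separate index manipulations.
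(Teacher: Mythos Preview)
Your proof is correct. The paper proceeds by direct pointwise verification of the indices (essentially what you give as your ``sanity check''), and for the last equality in \eqref{eqn:shift_decimation2} it iterates part~(1) $m$ times rather than invoking $S^m=\psi_{m,1}$. Your primary route---recognizing $S^i=\psi_{i,1}$ and then reading off both parts as instances of the composition formula $\psi_{j,m}\circ\psi_{k,n}=\psi_{jn+k,\,mn}$---is a tidy unification that makes the proposition a genuine corollary of Proposition~\ref{prop:decimation_shuffle}; the paper's approach is slightly more hands-on but avoids any dependence on that earlier result. Both are equally short, and your inclusion of the direct symbol-level check means your write-up already contains the paper's argument as well.
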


\begin{proof} 
For a single element $\bx \in X$,  \eqref{eqn:shift_decimation1} is equivalent to the assertion
$$\
\psi_{j, m} (S\bx) = \psi_{j, m} (S(x_0 x_1 x_2 \cdots))=\psi_{j, m}(x_1x_2 x_3 \cdots) = x_{j+1} x_{m+(j+1)} x_{2m+(j+1)}\cdots= \psi_{j+1, m}(\bx) .
$$
In this case \eqref{eqn:shift_decimation2} for $\bx \in X$ is equivalent to
$\quad S(x_j x_{m+j} x_{2m+ j} \cdots) = x_{m+j} x_{2m+j} x_{3m+j} \cdots .$

(2) For $\bx \in X$ we have 
$$
S\psi_{j, m}(\bx) = S(x_{j}x_{j+m}x_{j+ 2m} \cdots)= x_{j+m}x_{j+2m} \cdots= \psi_{j+m,m}(\bx) = \psi_{j,m}(S^m\bx),
$$
where the last equality used (1) iterated $m$ times.
\end{proof}

%
%
\section{Interleaving for arbitrary subsets of $\sA^{\NN}$} \label{sec:factorization2A}

%
%
\subsection{Interleaving and decimation } \label{subsec:41a}

Interleaving operations can be characterized in terms of the principal decimations of their output. The criterion  (2) below
could be used as an alternate  definition of $n$-fold interleaving of sets.

\begin{prop}\label{prop:41} {\rm (Decimation characterization of interleavings) }

(1) Every $\bx \in \sA^{\NN}$ has an $n$-fold interleaving factorization $\bx= (\pr_n)_{i=0}^{n-1} \bx_i$ for all $n \ge 1$. This factorization
is unique, with $\bx_i= \psi_{i,n}(\bx)$  \,($0 \le i \le n-1$), so that
\begin{equation}\label{eqn:single-factorization} 
\bx = \psi_{0,n}(\bx) \pr \psi_{1,n}(\bx) \pr \cdots \pr \psi_{n-1,n}(\bx)= (\prn)_{i=0}^{n-1} \psi_{i, n} (\bx). 
\end{equation} 

(2) If $X \subseteq \sA^{\NN}$ has an $n$-fold interleaving factorization $X= (\pr_n)_{i=0}^{n-1} X_i$, then
\begin{equation}\label{eqn:interleave-def2} 
X =\{ \bx\in\sA^{\NN}:\psi_{i,n}(\bx) \in X_i \quad \text{ for all } \quad 0\leq i\leq n-1\}.
\end{equation}
This factorization is unique 
 with  $X_i = \psi_{i,n}(X)$ \, $(0 \le i \le n-1)$, so that
 \begin{equation}\label{eqn:interleave-def1} 
X = \psi_{0,n}(X) \pr \psi_{1,n}(X) \pr \cdots \pr \psi_{n-1,n}(X) = (\pr_n)_{i=0}^{n-1} \psi_{i, n} (X).
\end{equation} 
\end{prop}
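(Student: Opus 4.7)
The plan is to prove (1) first by directly unpacking the definitions, and then derive (2) as a short consequence using (1).

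For part (1), given $\bx = a_0 a_1 a_2 \cdots \in \sA^{\NN}$, I would compute $\psi_{i,n}(\bx) = a_i a_{i+n} a_{i+2n} \cdots$ from Definition 2.1 and then apply the definition of $n$-fold interleaving from Definition 2.3. Writing $\psi_{i,n}(\bx) = c_{i,0} c_{i,1} c_{i,2} \cdots$ with $c_{i,k} = a_{i+kn}$, the interleaving $\by = (\prn)_{i=0}^{n-1} \psi_{i,n}(\bx)$ satisfies $b_{i+kn} = c_{i,k} = a_{i+kn}$ for $0 \le i \le n-1$ and $k \ge 0$, recovering $\bx$ symbol by symbol. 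For uniqueness, suppose $\bx = (\prn)_{i=0}^{n-1} \by_i$ with $\by_i = d_{i,0} d_{i,1} \cdots$. The interleaving definition forces $a_{i+kn} = d_{i,k}$ for every $i, k$, which says precisely that $\by_i = \psi_{i,n}(\bx)$. This proves (1).

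For part (2), I first establish the identity $X_i = \psi_{i,n}(X)$ for each $i$. The inclusion $\psi_{i,n}(X) \subseteq X_i$ follows because every $\bx \in X$ has the form $\bx = \bx_0 \pr \bx_1 \pr \cdots \pr \bx_{n-1}$ with $\bx_j \in X_j$, and by uniqueness in part (1) we must have $\psi_{i,n}(\bx) = \bx_i \in X_i$. For the reverse inclusion, given any $\bx_i \in X_i$, choose arbitrary $\bx_j \in X_j$ for $j \neq i$ (possible as long as every $X_j$ is nonempty); then $\bx := (\prn)_{j=0}^{n-1} \bx_j \in X$ and $\psi_{i,n}(\bx) = \bx_i$, so $\bx_i \in \psi_{i,n}(X)$. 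With $X_i = \psi_{i,n}(X)$ in hand, the characterization \eqref{eqn:interleave-def2} follows immediately: the $\subseteq$ inclusion is just part (1) applied to elements of $X$, while the $\supseteq$ inclusion uses part (1) to write any candidate $\bx$ as $(\prn)_{i=0}^{n-1} \psi_{i,n}(\bx)$, whose factors lie in $X_i$ by hypothesis. Uniqueness of the ordered factor tuple is then automatic, since any factor $X_i$ must equal $\psi_{i,n}(X)$.

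Honestly this proposition is mostly unpacking definitions, so I do not anticipate a substantial obstacle. The one subtlety I would flag is the degenerate case in which some $X_j$ is empty: then $X = \emptyset$ forces $\psi_{i,n}(X) = \emptyset$ for all $i$, but the factorization $\emptyset = \emptyset \pr X_1 \pr \cdots \pr X_{n-1}$ holds for arbitrary nonempty $X_1, \ldots, X_{n-1}$, so the strict uniqueness claim needs the convention that all factors are nonempty (or equivalently, $X$ is nonempty). I would simply note this convention at the start of (2) and proceed.
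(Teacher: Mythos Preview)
Your proposal is correct and follows essentially the same approach as the paper: part (1) is a direct symbol-by-symbol unpacking of the definitions, and part (2) hinges on the same surjectivity argument (pick arbitrary $\bx_j \in X_j$ for $j \neq i$ to exhibit a preimage). The only cosmetic difference is ordering: the paper first derives \eqref{eqn:interleave-def2} via a chain of set equalities and then reads off $X_i = \psi_{i,n}(X)$, whereas you establish $X_i = \psi_{i,n}(X)$ first and then deduce \eqref{eqn:interleave-def2}. Your flag about the empty-factor edge case is well taken and is in fact glossed over in the paper's own argument.
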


\begin{proof} 
The identity 
\eqref{eqn:single-factorization} is immediate from the definition of interleaving product,
checking it symbol by symbol. This $n$-fold interleaving factorization of $\bx$ is unique because if 
$\bx=(\pr_n)_{i=0}^{n-1}\bx_i$, then the $(i+kn)$th symbol of $\bx$ is by definition the $k$th symbol of $\bx_i$, so that each symbol of $\bx_i$ is determined by a symbol of $\bx$.

(2) Let $X= X_0 \pr X_ \pr \cdots \pr X_{n-1}.$ By definition
 \begin{align*}
X   &= \{\bx \in\sA^{\NN}:\,\, \bz=(\pr_n)_{i=0}^{n-1} \bx_i \, :\, \bx_i\in X_i, \,\, \text{ for all \,\, }0\leq i\leq n-1\}
\nonumber    \\
    &=\{\bx\in\sA^{\NN}\, :\,\,\psi_{i,n}(\bx) =\bx_i,\,\, \text{with} \,\, \bx_i\in X_i, \,\, \text{ for all} \,\, 0\leq i\leq n-1\}
\nonumber    \\
    &=\{\bx\in\sA^{\NN}:\,\, \psi_{i,n}(\bx) \in X_i \,\, \text{ for all } \,\, 0\leq i\leq n-1\},
\end{align*}
which is  \eqref{eqn:interleave-def2};  we used (1) to deduce the second equality.  

To show  \eqref{eqn:interleave-def1}, it suffices to show $\psi_{i,n}(X) =X_i$. 
We have $\psi_{i,n}(X) \subseteq X_i$ by  \eqref{eqn:interleave-def2}.
To show the map is onto, for any  $\bx_i$ we can pick arbitrary  $\bx_j \in X_j$ for $j \ne i$
and then  (1) gives   $\bx := (\pr_n)_{j=0}^{n-1} \bx_j \in Z$ has $\psi_{i,n}(\bx) = \bx_i$,
as required. 
\end{proof} 

We deduce Theorem \ref{thm:DIF} from the proposition.   

%
%
\begin{proof}[Proof of Theorem \ref{thm:DIF}]
We are to show $X$ has an interleaving factorization if and only if $X=  X^{[n]}.$  

(1)  Suppose $X = X^{[n]}$. By definition  $X^{[n]}= \psi_{0,n}(X) \pr \cdots \pr \psi_{n-1,n}(X)$ has  an interleaving factorization,
so $X$ does too. 
Conversely if  $X= X_0 \pr X_1 \pr \cdots \pr X_{n-1}$  is an interleaving factorization then by Proposition \ref{prop:41} (2) 
$X_i= \psi_{i, n}(X)$ whence
$X^{[n]} = X_0 \pr X_1 \pr \cdots \pr X_{n-1} = X.$ 

(2) This is Proposition \ref{prop:41} (2).
\end{proof}

%
%
\subsection{$n$-fold  interleaving closure operations} \label{subsec:21}

We show that the family of closure operations $X\to X^{[n]}$ on sets $X\subseteq\sA^{\NN}$ commutes with topological closure, and the equality $X= X^{[n]} $ corresponds to $X$ having an $n$-fold interleaving factorization.

The following result shows this operation is a closure operation in the set-theoretic  sense.

\begin{thm}\label{thm:inclusion} {\rm ($n$-fold interleaving closure)} \,
The $n$-fold interleaving closure  operation $X^{[n]}$ of sets $X \subseteq \sA^{\NN}$ has the following   properties: 
\begin{enumerate}
\item[(1)]
{\rm (Projection property)} The  $n$-fold interleaving closure $X^{[n]}$ is  characterized by the  property 
that  it is the maximal set $Z$ such
that its principal decimations at level $n$ satisfy 
\begin{equation}\label{eqn:projection} 
\psi_{i,n}(Z) =  \psi_{i, n}(X) \quad \mbox{for} \quad 0\le i \le n-1.
\end{equation} 
\item[(2)] {\rm (Extension property)} Any  set $X \subseteq \sA^{\NN}$ satisfies 
\begin{equation}\label{eqn:interleave-inclusion}
X \subseteq X^{[n]}.
\end{equation}
\item[(3)]  {\rm (Idempotent property)} The operation $X \mapsto X^{[n]}$ is idempotent; i.e., 
$(X^{[n]})^{[n]} = X^{[n]}$ for all $X$.

\item[(4)] {\rm(Isotone property)} If $X \subseteq Y$ then $X^{[n]} \subseteq Y^{[n]}$. 
\end{enumerate}
\end{thm}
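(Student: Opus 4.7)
The plan is to derive all four properties from Proposition \ref{prop:41}, especially the pointwise identity $\bx = (\prn)_{i=0}^{n-1}\psi_{i,n}(\bx)$ of equation \eqref{eqn:single-factorization} and the fact that every $n$-fold interleaving factorization $X = (\prn)_{i=0}^{n-1} X_i$ recovers its factors as principal decimations $X_i = \psi_{i,n}(X)$. I would prove the four properties in the order (2), (1), (3), (4), because the maximality half of (1) relies on the extension property (2).

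First, for (2), given $\bx \in X$, the pointwise factorization \eqref{eqn:single-factorization} writes $\bx = (\prn)_{i=0}^{n-1}\psi_{i,n}(\bx)$. Since $\psi_{i,n}(\bx) \in \psi_{i,n}(X)$ for each $0 \le i \le n-1$, this exhibits $\bx$ as an element of $(\prn)_{i=0}^{n-1}\psi_{i,n}(X) = X^{[n]}$, giving $X \subseteq X^{[n]}$.

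For (1), I would first verify that $Z = X^{[n]}$ satisfies the projection equalities $\psi_{i,n}(Z) = \psi_{i,n}(X)$. Since $X^{[n]}$ is by definition an $n$-fold interleaving with factors $X_i = \psi_{i,n}(X)$, Proposition \ref{prop:41}(2) applied to $X^{[n]}$ gives $\psi_{i,n}(X^{[n]}) = \psi_{i,n}(X)$ for $0 \le i \le n-1$. For maximality, suppose $Z \subseteq \sA^{\NN}$ satisfies $\psi_{i,n}(Z) = \psi_{i,n}(X)$ for all such $i$. Applying the already-established extension property (2) to $Z$ yields
\begin{equation*}
Z \;\subseteq\; Z^{[n]} \;=\; (\prn)_{i=0}^{n-1}\psi_{i,n}(Z) \;=\; (\prn)_{i=0}^{n-1}\psi_{i,n}(X) \;=\; X^{[n]},
\end{equation*}
establishing maximality.

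For (3), idempotence is immediate from the projection equalities of (1):
\begin{equation*}
(X^{[n]})^{[n]} \;=\; (\prn)_{i=0}^{n-1}\psi_{i,n}(X^{[n]}) \;=\; (\prn)_{i=0}^{n-1}\psi_{i,n}(X) \;=\; X^{[n]}.
\end{equation*}
For (4), if $X \subseteq Y$ then $\psi_{i,n}(X) \subseteq \psi_{i,n}(Y)$ for each $i$, since decimations are defined as pointwise unions; and $n$-fold interleaving is monotone in each argument by its defining set union. Combining these inclusions factor-by-factor gives $X^{[n]} \subseteq Y^{[n]}$. No step here presents a genuine obstacle — the argument is essentially bookkeeping built on Proposition \ref{prop:41} — and the only care needed is in sequencing (2) before the maximality half of (1).
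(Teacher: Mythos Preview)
Your proposal is correct and follows essentially the same approach as the paper, relying throughout on Proposition \ref{prop:41}. Your arguments for (3) and (4) are slightly more direct than the paper's --- you compute $(X^{[n]})^{[n]}$ outright using the projection equalities rather than invoking maximality, and you derive isotonicity from the evident monotonicity of decimation and interleaving rather than via a union argument --- but these are cosmetic simplifications, not a different route.
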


\begin{rem}\label{rem:43} 
(Set theory  closure property) 
Properties (2) , (3), and (4) comprise the axioms of a  {\em Moore closure} property 
(See   Schechter \cite[Sec. 4.1-4.12]{Schechter96}). 
These axioms are known to be equivalent to the property of being closed under arbitrary intersections.
The  $n$-fold interleaving closure operation
does not  satisfy all of Kuratowski's axioms  defining the closed sets of a topology; it does not  
satisfy the set union property $(X \cup Y)^{[n]} = X^{[n]} \cup Y^{[n]}$. It does satisfy the inclusion  
 \begin{equation}\label{eqn:interleaving_closure_intersection}
 X^{[n]} \cup Y^{[n]} \subseteq  \big(X \cup Y\big)^{[n]}.
 \end{equation} 
As an  example showing the inclusion can be strict,  take $X = X^{[2]} = \{ 0^{\infty}\}$, $Y=Y^{[2]}=  \{ 1^{\infty}\}$.
Then $X^{[2]} \cup Y^{[2]}     \subsetneq \big(X \cup Y\big)^{[2]} = \{ 0^{\infty}, 1^{\infty}, (01)^{\infty}, (10)^{\infty}\}$. 
Relations between the interleaving closure  operations and topological closure in $\sA^{\NN}$ are
given in Section \ref{sec:26b}.
\end{rem} 

\begin{proof} 
 (1) If a collection of sets each have property \eqref{eqn:projection} then so does 
 their union, and $X$ has property \eqref{eqn:projection}, so 
  there exists a maximal set $Z$ with property \eqref{eqn:projection}. 
 By definition 
 $$
   X^{[n]} :=\psi_{0,n}(X)\pr\psi_{1,n}(X)\pr\cdots\pr\psi_{n-1,n}(X) \
 $$
 Then  by  Proposition \ref{prop:41}(2), 
 \begin{align}\label{eqn:above} 
     X^{[n]}
     &=\{\bz\in\sA^{\NN}:\psi_{i,n}(\bz)\in\psi_{i,n}(X)\text{ for all }0\leq i \leq n-1\}
 \end{align}
 The statement $\psi_{i,n}(Z)=\psi_{i,n}(X)$ means that $\psi_{i,n}(\bz)in \psi_{i,n}(X)$ for all $\bz\in Z$. From \eqref{eqn:above}, 
 one  sees that $Z= X^{[n]}$ is precisely the maximal set  such that \eqref{eqn:projection}   holds  for all $0\leq i\leq n-1$.

 (2) It follows from (1). Alterntively, 
 by Proposition \ref{prop:41}(1) 
given $\bx \in X$ we have
$$
\bx = \psi_{0, n}(\bx) \pr \psi_{1,n}(\bx) \pr \cdots \pr \psi_{n-1,n}(\bx) \in \psi_{0, n}(X) \pr \psi_{1,n}(X) \pr \cdots \pr \psi_{n-1, n}(X),
$$
which  certifies  $\bx \in X^{[n]}$, proving  \eqref{eqn:interleave-inclusion}.

(3) Idempotence follows from (1) and  (2): By (1) $X^{[n]}$ is the maximal set having   $\psi_{i,n}(X^{[n]}) \subseteq \psi_{i,n}(X)$ holds for $0 \le i \le n-1$. Now by (2) $(X^{[n]})^{[n]}$ contains $X^{[n]}$.  But  $\psi_{i,n}((X^{[n]})^{[n]}) \subseteq  \psi_{i,n}((X^{[n]})  \subseteq \psi_{i,n}(X)$
for $0\le i\le n-1$, 
so it is also maximal, so $(X^{[n]})^{[n]} = X^{[n]}$.

(4) Suppose that $ X \subseteq Y$. Using the projection property (2) for $X$ and $Y$ separately shows
 $$
 \psi_{j,n} (Y^{[n]} \cup X^{[n]})=\psi_{i,n} (Y^{[n]}) \cup \psi_{i,n} (X^{[n]}) =\psi_{ji n}(X) \cup \psi_{i,n}(Y) = \psi_{i,n}(Y) = \psi_{i,  n}(Y^{[n]}) \quad 0 \le i \le n-1.
 $$  
 The projection property now gives $Y^{[n]} \cup X^{[n]}\subseteq Y^{[n]}$, whence $X^{[n]}\subseteq Y^{[n]}$.  
\end{proof}

We now  prove for general sets $X \subseteq \sA^{\NN}$  the characterization of interleaving factorizations
given in Theorem \ref{thm:DIF}. 

%
%
\begin{proof}[Proof of Theorem \ref{thm:DIF}] 
(1) Suppose $X = X^{[n]}$. Then by definition 
$X =  X^{[n]}= \psi_{0,n}(X) \pr \cdots \pr \psi_{n-1,n}(X)$ is an interleaving factorization. 
Conversely if  $X= X_0 \pr X_1 \pr \cdots \pr X_{n-1}$  is an interleaving factorization then by Proposition \ref{prop:41} (2) 
$X_i= \psi_{i, n}(X) = X_i$ whence
$X^{[n]} = X_0 \pr X_1 \pr \cdots \pr X_{n-1} = X.$ This proves  both (1) and the uniqueness of factorization (2). 
\end{proof}

%
%
\subsection{Shuffle  identities for interleaving operators}\label{subsec:23}

The  family of interleaving operations  satisfy 
universal algebraic identities under particular compositions of operations, 
acting on general subsets of $\msrA^{\NN}$.
We now prove Proposition \ref{thm:shuffle}, which asserts
\begin{equation}\label{eqn:shuffle2}
(\pr_n)_{i=0}^{n-1}\left((\pr_m)_{j=0}^{m-1} X_{i+jn} \right) =  (\pr_{mn})_{k=0}^{mn-1}\, X_{k}. 
\end{equation}

One reads the interleaving of interleavings  on the left side  of  \eqref{eqn:shuffle2} as 
\begin{eqnarray*}
&&(X_{0} \pr X_{n} \pr \cdots \pr X_{(m-1)n}) \pr (X_{1} \pr X_{n+1} \pr \cdots  \pr X_{(m-1)n+1})\pr \cdots  \\
&& \quad\quad\quad \quad\quad\quad\quad  \cdots \pr (X_{n-1} \pr X_{2n-1} \pr \cdots  \pr X_{mn-1}),
\end{eqnarray*} 
with parentheses indicating   composition of $m$-fold interleavings  given as input to an $n$-fold interleaving. The right side
of  \eqref{eqn:shuffle2}
is  an $mn$-fold interleaving, \begin{equation}
X_{0} \pr X_{1} \pr X_{2} \pr \cdots \pr X_{n-1} \pr X_{n} \pr X_{n+1} \pr  \cdots
 \pr \cdots \pr X_{(m-1)n + n-2}  \pr X_{(m-1)n + n-1},
\end{equation}
with  factors taken in  linear order.


\begin{proof}[Proof of Proposition \ref{thm:shuffle} ]
Using Proposition \ref{prop:41}(2) we obtain 
\begin{align}
    (\pr_n)_{i=0}^{n-1}\left((\pr_m)_{j=0}^{m-1} X_{i+jn} \right)
    &=\{\bx\in\sA^{\NN}:\psi_{i,n}(\bx)\in (\pr_m)_{j=0}^{m-1} X_{i+jn}\text{ for all }0\leq i\leq n-1\}
 \nonumber   \\
    &=\{\bx \in\sA^{\NN}:\psi_{j,m}\left(\psi_{i,n}(\bx)\right)\in X_{i+jn}\text{ for all }0\leq j\leq n-1,0\leq i\leq n-1\}
\nonumber    \\
    &=\{\bx \in \sA^{\NN}:\psi_{i+jn,mn}(\bx)\in X_{i+jn}\text{ for all }0\leq j\leq n-1,0\leq i\leq n-1\}
\nonumber    \\
    &=(\pr_{mn})_{k=0}^{mn-1}\, X_{k}. \nonumber
\end{align}
The interleaving definition gives the second and fourth  equality and Proposition \ref{prop:decimation_shuffle}   the third equality. 
\end{proof}

Shuffle identities are useful in studying self-interleavings of sets $X$. 
\begin{defn}\label{defn:self-interleaving} 
 Give $X \subseteq \sA^{\NN}$ let $X^{(\pr n)}$ denote the {\em $n$-fold self-interleaving} defined by
$$
X^{(\pr n)} := (\pr_n)_{i=0}^{n-1} X = X \pr X \pr \cdots \pr X \quad \mbox{($n$ factors in product)}. 
$$
\end{defn}

The  special case of  self-interleaving under composition satisfies identities similar to that of  exponentiation,
a consequence of  the shuffle identities.

\begin{prop}\label{expprop}  {\rm (Composition of self-interleavings)}  For any natural numbers $m, n \ge 1$, and 
any subset  $X$ of $\msrA^{\NN}$, the following set-theoretic identity holds
for $n$-fold, $m$-fold and $mn$-fold self-interleaving.
\begin{equation}\label{eqn:self_interleaving0} 
(X^{(\pr n)})^{(\pr m)} = (X^{(\pr m)})^{(\pr n)} = {X}^{(\pr mn)}.
\end{equation} 
\end{prop}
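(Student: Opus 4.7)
The plan is to derive Proposition \ref{expprop} as a direct specialization of the shuffle identity already established in Proposition \ref{thm:shuffle}. Since self-interleaving is defined by taking all interleaving factors equal, I would simply set $X_k = X$ for every $0 \le k \le mn-1$ in the shuffle identity
\[
(\pr_n)_{i=0}^{n-1}\left((\pr_m)_{j=0}^{m-1} X_{i+jn} \right) = (\pr_{mn})_{k=0}^{mn-1} X_{k}.
\]
With all $X_k = X$, the inner $m$-fold interleaving $(\pr_m)_{j=0}^{m-1} X_{i+jn}$ collapses to $X^{(\pr m)}$ independently of $i$, so the left-hand side becomes $(\pr_n)_{i=0}^{n-1} X^{(\pr m)} = (X^{(\pr m)})^{(\pr n)}$. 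The right-hand side collapses to $X^{(\pr mn)}$. This yields the identity $(X^{(\pr m)})^{(\pr n)} = X^{(\pr mn)}$.

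To obtain the remaining equality, I would interchange the roles of $m$ and $n$ in the same argument (applying Proposition \ref{thm:shuffle} with the roles of the indices swapped, or equivalently observing that $\lcm$ is symmetric here since $mn = nm$). This gives $(X^{(\pr n)})^{(\pr m)} = X^{(\pr nm)} = X^{(\pr mn)}$, completing the chain of equalities in \eqref{eqn:self_interleaving0}. There is essentially no obstacle: the entire content is packaged into the shuffle identity, and the self-interleaving case is the degenerate situation where the index re-arrangement becomes invisible because every factor is the same set $X$. The only mild point worth noting in the write-up is that commutativity of the two outer orders of composition does \emph{not} hold for interleavings of distinct sets (as discussed in Section \ref{subsubsec:136}), but for self-interleavings both compositions produce the same $mn$-fold self-interleaving, which is precisely what \eqref{eqn:self_interleaving0} records.
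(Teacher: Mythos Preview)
Your proposal is correct and follows essentially the same approach as the paper: specialize the shuffle identity of Proposition~\ref{thm:shuffle} by setting all $X_k = X$ to obtain $(X^{(\pr m)})^{(\pr n)} = X^{(\pr mn)}$, then interchange $m$ and $n$.
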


\begin{proof} 
In Theorem \ref{thm:shuffle} choose all $X_k=X$ for  $0 \le k \le mn-1$ and obtain $(X^{(\pr m)})^{(\pr n)} = {X}^{(\pr mn)}$.
Then interchange $m$ and $n$.
\end{proof}

%
%
\subsection{Composition identities for interleaving closure operations}\label{subsec:24}

We prove  Theorem \ref{thm:410}  determining  the composition  of  self-interleaving closure operations:
$ (X^{[m]})^{[n]} = (X^{[n]})^{[m]} = X^{[ \lcm(m,n) ] }$.

We first establish a preliminary result  giving  formulas  and inclusions for   compositions of interleaving closure operations.

\begin{prop}\label{prop:cdc}
{\rm (Composition formulas)} 
(1) For all $m ,n \ge1$, and all $X \subseteq \sA^{\NN}$,  
\begin{equation}\label{eqn:decimation_mn2}
 (X^{[m]})^{[n]} = (\prn)_{i=0}^{n-1} \psi_{i,n}(X^{[m]}) . 
\end{equation}

(2) For all $m ,n \ge1$, and all $X \subseteq \sA^{\NN}$,
\begin{equation}\label{eqn:decimation_mn1}
 X^{[mn]}= (\prn)_{i=0}^{n-1} (\psi_{i,n}(X)^{[m]}) . 
\end{equation}

(3) For  all $m,n \ge 1$ 
\begin{equation}\label{eqn:divisibility-inclusion}
X^{[m]} \subseteq X^{[mn]} . 
\end{equation}

(4) If $\gcd(m,n)=1$ then
$$
(X^{[m]})^{[n]} = (X^{[n]})^{[m]} =X^{[mn]} 
$$
\end{prop}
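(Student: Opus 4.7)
My plan is to treat the four parts in sequence, using as main tools the definition of $n$-fold interleaving closure, the decimation composition rule of Proposition~\ref{prop:decimation_shuffle}, the shuffle identity of Proposition~\ref{thm:shuffle}, the projection property from Theorem~\ref{thm:inclusion}(1), and the uniqueness of $n$-fold interleaving factorizations from Theorem~\ref{thm:DIF}(2). Part (1) is immediate from unwinding the definition $Y^{[n]}=(\prn)_{i=0}^{n-1}\psi_{i,n}(Y)$ with $Y=X^{[m]}$. For part (2), I start from $X^{[mn]}=(\pr_{mn})_{k=0}^{mn-1}\psi_{k,mn}(X)$, reindex $k=i+jn$ with $0\leq i\leq n-1$ and $0\leq j\leq m-1$, and apply the shuffle identity to rewrite $X^{[mn]}$ as the $n$-fold interleaving of the $m$-fold interleavings $(\pr_m)_{j=0}^{m-1}\psi_{i+jn,mn}(X)$. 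The decimation composition rule then gives $\psi_{i+jn,mn}(X)=\psi_{j,m}(\psi_{i,n}(X))$, so the inner $m$-fold interleaving is precisely $(\psi_{i,n}(X))^{[m]}$.

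For part (3), I exploit the projection characterization of Theorem~\ref{thm:inclusion}(1): $X^{[mn]}$ is the largest set $Z$ satisfying $\psi_{k,mn}(Z)=\psi_{k,mn}(X)$ for all $0\leq k\leq mn-1$, so it is enough to verify $\psi_{k,mn}(X^{[m]})=\psi_{k,mn}(X)$. The inclusion $\psi_{k,mn}(X)\subseteq\psi_{k,mn}(X^{[m]})$ follows from $X\subseteq X^{[m]}$ (Theorem~\ref{thm:inclusion}(2)). For the reverse inclusion I reindex $k=j+im$ with $0\leq j\leq m-1$ and $0\leq i\leq n-1$ so that $\psi_{k,mn}=\psi_{i,n}\circ\psi_{j,m}$ by the composition rule; since $\psi_{j,m}(X^{[m]})=\psi_{j,m}(X)$ by the projection property at level $m$, every $\bz\in X^{[m]}$ satisfies $\psi_{j,m}(\bz)=\psi_{j,m}(\bx)$ for some $\bx\in X$, and hence $\psi_{k,mn}(\bz)=\psi_{k,mn}(\bx)\in\psi_{k,mn}(X)$.

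Part (4) is the substantive content. When $\gcd(m,n)=1$, combining parts (1) and (2) yields
\[
(X^{[m]})^{[n]}=(\prn)_{i=0}^{n-1}\psi_{i,n}(X^{[m]}),\qquad X^{[mn]}=(\prn)_{i=0}^{n-1}(\psi_{i,n}(X))^{[m]}.
\]
By the uniqueness of $n$-fold interleaving factorizations (Theorem~\ref{thm:DIF}(2)), these sets are equal if and only if $\psi_{i,n}(X^{[m]})=(\psi_{i,n}(X))^{[m]}$ for every $0\leq i\leq n-1$; the identity $(X^{[n]})^{[m]}=X^{[mn]}$ then follows by interchanging the roles of $m$ and $n$ and using the symmetry of $\gcd$. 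The inclusion $\psi_{i,n}(X^{[m]})\subseteq(\psi_{i,n}(X))^{[m]}$ holds unconditionally by the same kind of computation as in part (3). The main obstacle is the reverse inclusion, which is where coprimality enters decisively: given $\by\in(\psi_{i,n}(X))^{[m]}$, I must exhibit some $\bz\in X^{[m]}$ with $\psi_{i,n}(\bz)=\by$. My plan is to take $\bz$ in the form $\bz=(\pr_m)_{j'=0}^{m-1}\psi_{j',m}(\bv_{j'})$ with suitably chosen $\bv_{j'}\in X$. Because $\gcd(m,n)=1$, multiplication by $n$ is a bijection on $\ZZ/m\ZZ$, so for each residue $j'\in\{0,1,\ldots,m-1\}$ there is a unique $\ell_0(j')\in\{0,1,\ldots,m-1\}$ with $i+\ell_0(j')n\equiv j'\pmod m$. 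Writing out what $\psi_{i,n}(\bz)=\by$ demands on the positions contributed by $\bv_{j'}$ reduces precisely to the single equation $\psi_{i+\ell_0(j')n,\,mn}(\bv_{j'})=\psi_{\ell_0(j'),m}(\by)$, and the hypothesis $\by\in(\psi_{i,n}(X))^{[m]}$ together with $\psi_{\ell_0(j'),m}(\psi_{i,n}(X))=\psi_{i+\ell_0(j')n,\,mn}(X)$ (the composition rule again) guarantees such $\bv_{j'}\in X$ exists. A position-by-position check then confirms $\psi_{i,n}(\bz)=\by$, completing the argument.
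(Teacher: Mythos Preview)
Your proof is correct, and for parts (1), (2), and (4) it follows essentially the same route as the paper: (1) is just the definition, (2) comes from the shuffle identity plus the composition rule $\psi_{i+jn,mn}=\psi_{j,m}\circ\psi_{i,n}$, and (4) reduces to showing $\psi_{i,n}(X^{[m]})=(\psi_{i,n}(X))^{[m]}$ by constructing, for each $\by\in(\psi_{i,n}(X))^{[m]}$, an element $\bz\in X^{[m]}$ with $\psi_{i,n}(\bz)=\by$ via the bijection $j'\mapsto\ell_0(j')$ on $\ZZ/m\ZZ$ coming from $\gcd(m,n)=1$. The paper does precisely this, with the same Chinese-remainder bookkeeping written as $i+jn=k+\ell m$.

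Your treatment of (3), however, is genuinely cleaner than the paper's. The paper proves (3) via the chain $X^{[m]}\subseteq(X^{[m]})^{[n]}\subseteq X^{[mn]}$, where the second inclusion is reduced (via (1) and (2)) to showing $\psi_{i,n}(X^{[m]})\subseteq(\psi_{i,n}(X))^{[m]}$, and this last inclusion is then established by a page-long element-level argument. You instead verify the projection property $\psi_{k,mn}(X^{[m]})=\psi_{k,mn}(X)$ directly: writing $k=j+im$ and factoring $\psi_{k,mn}=\psi_{i,n}\circ\psi_{j,m}$, the equality is immediate from $\psi_{j,m}(X^{[m]})=\psi_{j,m}(X)$ (the projection property at level $m$), and then Theorem~\ref{thm:inclusion}(1) gives $X^{[m]}\subseteq X^{[mn]}$ in one stroke. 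As a bonus, the inclusion $\psi_{i,n}(X^{[m]})\subseteq(\psi_{i,n}(X))^{[m]}$ that the paper works hard to prove drops out of your computation for free (apply the projection characterization at level $m$ to the equalities $\psi_{j,m}(\psi_{i,n}(X^{[m]}))=\psi_{j,m}(\psi_{i,n}(X))$), so nothing is lost for the subsequent use in (4).
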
 

\begin{proof}
(1) This assertion is the definition of the $n$-fold interleaving closure of $X^{[m]}$. 

(2) We set  $X_{k} := \psi_{k, mn}(X)$ for $0 \le k \le mn-1$ in the shuffle identity \eqref{eqn:shuffle}, obtaining
$$
X^{[mn]} = (\pr_n)_{i=0}^{n-1} \big((\pr_m)_{j=0}^{m-1} \psi_{i+ jn, mn}(X) \big)
$$
 The right side of this equation contains  terms $Z_i := (\pr_m)_{j=0}^{m-1} \psi_{jn+i, mn}(X)$, and we  must show 
$Z_i =\psi_{i,n}(X)^{[m]}.$
We have
\begin{eqnarray*} 
 \psi_{i,n}(X)^{[m]} & := &\Big(\psi_{0,m}\circ \psi_{i,n}(X)\Big) \hspace{2pt} \pr \hspace{2pt} \Big(\psi_{1, m} \circ \psi_{i,n}(X)\Big)
  \hspace{2pt} \pr \hspace{2pt} \cdots \hspace{2pt}\pr\hspace{2pt} \Big(\psi_{m-1, m} \circ \psi_{i, n}(X)\Big)\\
&=& \psi_{i, mn}(X)  \hspace{2pt}\pr\hspace{2pt} \psi_{i+n, mn} (X)\hspace{2pt}\pr\hspace{2pt} \cdots \hspace{2pt}\pr 
\hspace{2pt}\psi_{i+ (m-1)n, mn}(X)\hspace{2pt}\hspace{2pt} =\hspace{2pt}\hspace{2pt} Z_i.
\end{eqnarray*}
as required.

(3) We have
 $X^{[m]} \subseteq (X^{[m]})^{[n]}$
 by the extension property of $n$-fold interleaving.
 We claim that 
 \begin{equation}\label{eqn:m-n-mn}
 (X^{[m]})^{[n]}\subseteq X^{[mn]}.
 \end{equation} 
To prove the claim,  comparing  the now proved \eqref{eqn:decimation_mn2} and \eqref{eqn:decimation_mn1},  it suffices to show 
\begin{equation}\label{eqn:inclusion1}  
\psi_{i,n}  (X^{[m]}) \subseteq \psi_{i,n}(X)^{[m]} \quad \mbox{for} \quad 0 \le i \le n-1. 
\end{equation} 
For fixed $i$, the right side of this inclusion is an $m$-fold interleaving 
$$
 \psi_{i,n}(X)^{[m]} = \big(\psi_{0,m} \circ \psi_{i,n}(X) \big) \hspace{2pt}\pr\hspace{2pt} \big(\psi_{1,m} \circ \psi_{i,n}(X)\big)  
 \hspace{2pt}\pr\hspace{2pt} \cdots \hspace{2pt}\pr\hspace{2pt} \big(\psi_{m-1,m} \circ \psi_{i,n}(X)\big)
$$ 
The composition rule for  decimations (Proposition \ref{prop:decimation_shuffle})   shows that 
\begin{equation}\label{eqn:leftside}
 \psi_{i,n}(X)^{[m]} = \psi_{ i, mn}(X) \pr \psi_{i+n,mn}(X) \pr \cdots \pr \psi_{i+(m-1)n ,mn}(X). 
\end{equation}

To evaluate  the left side   of the inclusion \eqref{eqn:inclusion1},    
suppose  $\bx = \psi_{i, n}(\bz) \in \psi_{i,n}(X^{[m]})$  with $\bz \in X^{[m]}$. 
Now by Proposition \ref{prop:41} (1),  $\bx$  has an $m$-fold interleaving factorization
$$
\bx=(\pr_m)_{j=0}^{m-1} \bw_j = \bw_0 \pr \bw_1 \pr \cdots \pr \bw_{m-1},
$$
where
$$
\bw_j =\psi_{j,m}(\bx) = \psi_{j,m} (\psi_{i,n} (\bz)) = \psi_{i+jn, mn} (\bz).
$$
Therefore
\begin{equation} \label{eqn:double1}
\bx= \psi_{i,n}(\bz) = (\pr_m)_{j=0}^{m-1} \psi_{i+ jn, mn}(\bz) = \psi_{i,mn}(\bz) \pr \psi_{i+ n, mn}(\bz) \pr \cdots \pr \psi_{i+(m-1)n, mn}(\bz).
\end{equation}

We are to show $\bx \in \psi_{i,n}(X)^{[m]}$ .
It suffices to show 
\begin{equation}\label{eqn:double2}
\psi_{i+jn,mn}(\bz) \in \psi_{i+jn, mn}(X)  \quad \mbox{for} \quad  0 \le j \le m-1,
\end{equation} 
since \eqref{eqn:double1} then asserts $\bx \in (\pr_m)_{j=0}^{m-1} \psi_{i+jn, mn }(X)$
whence   \eqref{eqn:leftside} shows $\bx \in \psi_{i, n}(X)^{ [m]}$.

To show \eqref{eqn:double2},  any $\bz \in X^{[m]}$ has, for $0 \le k \le m-1$, 
$$
\psi_{k,m}(\bz) \in \psi_{k,m}(X^{[m]})= \psi_{k,m}(X),
$$
where the equality of sets holds  by definition of $m$-fold interleaving.
Thus there exists some $\oz_k\in X$ with $\psi_{k,m}(\bz) = \psi_{k,m}(\oz_k).$
Now for $0 \le i \le n-1, 0\le j \le m-1$,
there exist unique $(k , \ell)$ satisfying
\begin{equation}\label{eqn:ijkl}
i + jn = k + \ell m,
\end{equation}
with $0 \le k \le m-1, 0\le \ell\le n-1$.
Here $k = k(i,j)$ is determined by $k \equiv i+jn \, (\bmod\, m)$. We have
\begin{eqnarray*}
\psi_{i+jn, mn} (\bz) &=& \psi_{k + \ell m, mn}(\bz) = \psi_{\ell, n} (\psi_{k,m}(\bz))\\
&=& \psi_{\ell, n}(\psi_{k,m}(\oz_k)) = \psi_{k+\ell m, mn}(\oz_k)\\
&=& \psi_{i +jn, mn} (\oz_k) \in \psi_{i+jn, mn}(X),
\end{eqnarray*}
showing \eqref{eqn:double2}.

  (4) It suffices to  show $(X^{[m]})^{[n]} = X^{[mn]}$ if $\gcd(m,n)=1$; interchanging $m$ and $n$ then
  gives the other case.  The  proof of (3) showed that   $(X^{[m]})^{[n]}\subseteq X^{[mn]}$
  holds (with no $\gcd$ restriction),  so it suffices to show the reverse inclusion
  $ X^{[mn]} \subseteq (X^{[m]})^{[n]}$.
  By the already proved \eqref{eqn:decimation_mn2}  and  \eqref{eqn:decimation_mn1}  this assertion is
 \begin{equation} \label{eqn:equality0} 
  X^{[mn]}=    (\prn)_{i=0}^{n-1} (\psi_{i,n}(X)^{[m]})         \subseteq (\prn)_{i=0}^{n-1} (\psi_{i,n}(X^{[m]})) = (X^{[m]})^{[n]}.
  \end{equation}
   It therefore suffices to prove the individual set equalities 
  \begin{equation}\label{eqn:equality3}  
\psi_{i,n}  (X^{[m]}) = \psi_{i,n}(X)^{[m]} \quad \mbox{for} \quad 0 \le i \le n-1. 
\end{equation}
hold when $\gcd(m,n)=1$.

  Now suppose we are given an arbitrary $\bx\in \psi_{i,n}(X)^{[m]}$. 
  We wish to show $\bx \in \psi_{i,n}(X^{[m]})$. 
  To begin, $\bx$  has 
  an $m$-fold interleaving factorization
  $$
  \bx = (\pr_m)_{j=0}^{m-1} \psi_{j,m}(\bx_j) , 
  $$
  in which  each $\bx_j = \psi_{i,n}(\bz_j) \in \psi_{i,n}(X)$ with  $\bz_j \in X$. Thus we have   
   \begin{equation}\label{eqn:psi-x-to-z} 
   \psi_{j,m}(\bx_j) = \psi_{j,m}(\psi_{i,n}(\bz_j)) = \psi_{i+jn, mn}(\bz_j). 
  \end{equation} 
  As in (3) there are $(k, \ell)$ with
   $$
  \psi_{i+jn, mn}(\bz_j) = \psi_{k + \ell m, mn}(\bz_j) = \psi_{\ell, n}(\psi_{k,m}(\bz_j)).
  $$

    Here, for fixed $i$, the value $k= k(i,j) $ is given by  $k \equiv i+jn\, (\bmod \, m)$. 
    The values $k(i,j)$ are all distinct as $j$ ranges from 0 to $m-1$ with $i$ fixed, because  $\gcd(m,n)=1$.
  It follows that the inverse map $j =j(i,k)$ is well defined.
 By definition of  $m$-fold interleaving closure, there will exist a value $\bz \in X^{[m]}$  having
  \begin{equation} \label{eqn:make-z} 
  \psi_{k,m}(\bz) = \psi_{k,m}(\bz_{j}) \quad \mbox{for} \quad 0 \le k \le m-1,
 \end{equation} 
 with $\bz_j \in X$ and  $j=j(i,k)$ runs over all $0 \le j \le m-1$ as $k$ varies.
 
    We claim that $\psi_{i,n} (\bz) = \bx$. We have
  \begin{eqnarray*}   
   \psi_{i,n}(\bz) &=&      (\pr_m)_{j=0}^{m-1} \psi_{i+jn, mn}(\bz) \quad\quad\quad\quad \quad \text{by \eqref{eqn:double1}}  \\
  &=& (\pr_m)_{j=0}^{m-1} \psi_{k(i, j) + \ell(i,j)m,mn}(\bz)     \quad\,\,\, \,\text{by \eqref{eqn:ijkl}} \\
   &=& (\pr_m)_{j=0}^{m-1} \psi_{\ell(i,j),n}(\psi_{k(i,j), m}(\bz)) \\
    &=& (\pr_m)_{j=0}^{m-1} \psi_{\ell(i,j),n}(\psi_{k(i,j), m}(\bz_{j})), \quad\,\,\, \,\text{by \eqref{eqn:make-z}}
    \end{eqnarray*}
  where the last line used the defining property of $\bz \in X^{[m]}$.  Now we simplify
  \begin{eqnarray*}   
  \psi_{i,n}(\bz)     &=& (\pr_m)_{j=0}^{m-1} \psi_{k(i,j) + \ell(i,j)m, mn}(\bz_j)) \\
    &=& (\pr_m)_{j=0}^{m-1} \psi_{i + jn, mn}(\bz_j) 
    = (\pr_m)_{j=0}^{m-1} \psi_{j,m}(\bx_j)  \quad\quad  \text{by \eqref{eqn:psi-x-to-z}}  \\
    &=& \bx,
  \end{eqnarray*}
  so $\bx \in \psi_{i,n}(X^{[m]})$. 
 \end{proof}

We now  prove the   Theorem \ref{thm:410} formulas for composition of interleaving closures.
 .

%
%
\begin{proof}[Proof of Theorem \ref{thm:410}] 
It suffices to prove $(X^{[m]})^{[n]}  = X^{[ \lcm(m,n)]}$,  because its  right
side is symmetric in $m$ and $n$; we  may then exchange $m$ and $n$  to establish $(X^{[n]})^{[m]}  = X^{[ \lcm(m,n)] }$.
We have already proven $(X^{[m]})^{[n]}  = X^{[mn]}$
for the case $\gcd(m,n)=1$ in Proposition \ref{prop:cdc}.
 
 For general $n,m$ we let $d= \gcd(m,n)$, the greatest common divisor. One can always find
$e,f$ with $d=ef$ such that $e|m$ and $f|n$ and $\gcd(\frac{m}{e}, \frac{n}{f})=1.$
To see  this, let  $d=\prod_{p} p^{e(p,d)}$ denote the prime factorization of $d$; then the choice $e = \prod_{p^{e(p,d)} || m} p^{e(p,d)}$
and $f= \prod_{p^{e(p,d) +1} | m} p^{e(p,d)}$ will work. Note that if $p^{e(p,d) +1} | m$, then necessarily $p^{e(p,d)} || n$, so that $f|n$. By construction, $e|m$, $ef=d$, and $\gcd(\frac{m}{e},\frac{n}{f})=1$. 

We then have 
$$
X^{[\lcm(m,n)]} = X^{ [mn/ef]} = (X^{[m/e]})^{[n/f]} \subseteq (X^{[m/e]})^{[n]} \subseteq (X^{[m]})^{[n]}.
$$ 
Reading from  left to right  the second equality  comes from Proposition \ref{prop:cdc} (4), the first inclusion  follows  from Proposition \ref{prop:cdc}(3), and the final inclusion follows  from the isotone property (4) in Theorem \ref{thm:inclusion}. 

It remains to show  that 
$$
(X^{[m]})^{[n]}\subseteq X^{[\lcm(m,n)]}.
$$
Now let  $d = \gcd(m,n)$, so that $\ell=\lcm(m,n) =  \frac{mn}{d}$. 
By Proposition \ref{prop:cdc}(1) we  have
$(X^{[m]})^{[n]} = (\prn)_{i=0}^{n-1} \psi_{i,n}(X^{[m]})$
(without any $\gcd$ restriction).

Now consider $\bx= \bx_0 \pr \bx_1 \pr \cdots \pr \bx_{n-1} \in (X^{[m]})^{[n]}$,
and write $\bx= b_0 b_1 b_2 \cdots$. 
Here for $0 \le i \le n-1$,  
$$
\bx_i  := \psi_{i,n}(\bx)
= b_{i}b_{i+n}b_{i+ 2n} b_{i+3n} \cdots. 
$$

We are to show that $\bx \in X^{[\lcm(m,n)]}$. 
To begin, we have 
$$
\bx_i  = \psi_{i,n}(\bz_{i,0} \pr \bz_{i,1} \pr \cdots \pr \bz_{i, m-1}) \in \psi_{i,n}(X^{[m]}).
$$
where  each $\bz_{i,j} \in \psi_{j,m} (X)$ for $ 0 \le j \le m-1$, so that
$$
\bz_{i,j} = \psi_{j,m} (\bw_{i,j}) \quad \mbox{with} \quad \bw_{i,j} \in X.
$$

Because $\gcd(m,n)=d$, the  application of $\psi_{i,n} (\cdot)$ to $\bz_i= (\pr_m)_{j=0}^{m-1} \bz_{i,j} \in X^{[m]}$ 
   only hits those words   $\bz_{i,j}$  having  subscripts $j$ falling in $\frac{m}{d}$ different
 residue classes $(\bmod \, m)$, and it visits each such class exactly $d$ times, as $j$ varies over  $0 \le j \le m-1$. 
 These $\frac{m}{d}$ classes  $(\bmod \, m)$ comprise distinct  residue classes 
 $(\bmod \,\frac{m}{d}),$  again because $\gcd(m,n)=d$. 
 These classes   are exactly   $i + jn \,(\bmod \, \frac{m}{d})$ for $0 \le j \le \frac{m}{d} -1$. 
  We can therefore rewrite  
  $\bx_i= \by_{i,0}  \pr \by_{i,1} \pr \cdots \pr \by_{i, \frac{m}{d}-1}$ 
  with 
  $\by_{i,j }= b_{i+jn}b_{i + jn+ mn/d} b_{i+ jn + 2mn/d} \cdots$ for $0 \le j \le \frac{m}{d}-1$.
  We have $\lcm(m,n) = \frac{mn}{d}$ different elements $\by_{i,j} \in \psi_{k, \frac{m}{d}}(X)$.
 The key point is that  for $k= i+jn$  we have
 $$
 \by_{i,j} =  \psi_{i+jn \, (\bmod \, m/d), m/d} (\bw_{i+jn }) \in \psi_{k, \frac{m}{d}}(X) \quad \mbox{for} \quad 0\le i\le n-1, \,\, 0 \le j \le \frac{m}{d}-1. 
 $$
Here $k= i+jn$ varies over the interval $0\le k \le \frac{mn}{d} -1$.  Consequently, 
$$
\bx = (\pr_{n})_{i=0}^{n-1} \bx_i  =(\pr_{n})_{i=0}^{n-1}  \left((\pr_{m/d})_{j=0}^{m/d-1} \by_{i,j} \right) 
= (\pr_{\frac{mn}{d}})_{k=0}^{mn/d -1} \psi_{k, \frac{mn}{d}}(\bw_{k})\big). 
$$
where  the last equality uses the shuffle identity  \eqref{eqn:shuffle}. We also find  that  $k=i+jn$ runs through the residue classes 
$(\bmod \, mn/d)$ in the correct order. 
 We conclude that $\bx \in X^{[mn/d]}= X^{[\lcm(m,n)]}$, 
  establishing the desired inclusion.
 \end{proof}

%
%
\subsection{Interleaving commutes with set intersection}\label{sec:25}

Interleaving also behaves well with respect to intersection.

\begin{prop}\label{intintprop}
{\rm (Interleaving commutes with intersection)}
For $m, n \ge 2$ and  subsets $X_0, X_1, \cdots, X_{mn-1}$ of $\msrA^{\NN}$, the following set-theoretic identity holds:
\begin{equation}\label{eqn:interleave_intersect} 
\bigcap_{j=0}^{m-1}(({\prn})_{i=0}^{n-1} X_{jn+i}) =({\prn})_{i=0}^{n-1}(\bigcap_{j=0}^{m-1}X_{jn+i}).
\end{equation}
\end{prop}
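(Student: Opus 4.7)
The plan is to verify the set equality element-by-element using the decimation characterization of interleavings provided by Proposition~\ref{prop:41}(2), which states that
\[
(\prn)_{i=0}^{n-1} Y_i = \{\bx \in \sA^{\NN} : \psi_{i,n}(\bx) \in Y_i \text{ for all } 0 \le i \le n-1\}.
\]
This reformulation turns both sides of \eqref{eqn:interleave_intersect} into conjunctions of membership conditions on the principal decimations of $\bx$, at which point commuting intersection with interleaving reduces to the trivial commutation of two universal quantifiers.

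Concretely, I would first unwind the left-hand side: a sequence $\bx$ lies in $\bigcap_{j=0}^{m-1}\bigl((\prn)_{i=0}^{n-1} X_{jn+i}\bigr)$ if and only if, for every $0 \le j \le m-1$, $\bx \in (\prn)_{i=0}^{n-1} X_{jn+i}$, which by Proposition~\ref{prop:41}(2) is equivalent to
\[
\psi_{i,n}(\bx) \in X_{jn+i} \quad \text{for all } 0 \le j \le m-1 \text{ and all } 0 \le i \le n-1.
\]
Next I would unwind the right-hand side: $\bx$ lies in $(\prn)_{i=0}^{n-1}\bigl(\bigcap_{j=0}^{m-1} X_{jn+i}\bigr)$ iff, again by Proposition~\ref{prop:41}(2),
\[
\psi_{i,n}(\bx) \in \bigcap_{j=0}^{m-1} X_{jn+i} \quad \text{for all } 0 \le i \le n-1,
\]
which is the same universally quantified statement as above.

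Since both sides translate to the identical joint condition $\{\psi_{i,n}(\bx) \in X_{jn+i} : 0 \le i \le n-1,\ 0 \le j \le m-1\}$, the two sets coincide. There is no real obstacle here beyond bookkeeping; the only point requiring a small amount of care is confirming that the characterization in Proposition~\ref{prop:41}(2) applies symmetrically to both sides (the factors on the right are genuine subsets of $\sA^{\NN}$, possibly empty, which is fine since the interleaving of an empty factor is empty and matches the intersection on the left becoming empty whenever some $X_{jn+i}$ makes the fiber empty).
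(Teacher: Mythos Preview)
Your proof is correct and follows essentially the same approach as the paper: both arguments invoke Proposition~\ref{prop:41}(2) to rewrite membership in each side as the joint condition $\psi_{i,n}(\bx)\in X_{jn+i}$ for all $0\le i\le n-1$ and $0\le j\le m-1$, after which the equality is immediate.
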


\begin{proof}
By Proposition \ref{prop:41} (2),  we have $\bx \in Z_j:= ({\prn})_{i=0}^{n-1} X_{jn+i}$ if and only if $\psi_{i}(\bx)\in X_{jn+i}$ for $0 \le i \le n-1$.
Consequently:
\begin{eqnarray*}
\bx \in \bigcap_{j=0}^{m-1}(({\prn})_{i=0}^{n-1} X_{jn+i}) & \Leftrightarrow& 
\psi_i(\bx) \in X_{jn+i} \quad \mbox{for} \quad 0\le i \le n-1, \, 0 \le j \le m-1\\
& \Leftrightarrow & \psi_i(\bx) \in \bigcap_{j=0}^{m-1}X_{jn+i} \quad \mbox{for} \quad 0\le i \le n-1 \\
& \Leftrightarrow& \bx \in (\prn)_{i=0}^{n-1}(\bigcap_{j=0}^{m-1}X_{jn+i}),
\end{eqnarray*} 
verifying  \eqref{eqn:interleave_intersect}.
\end{proof} 

\begin{cor}\label{cor:dec_closure_intersection}
Let $X, Y \subseteq \sA^{\NN}$. Then their $n$-fold interleaving closures satisfy
\begin{equation}\label{eqn:dec_closure_intersection}
X^{[n]} \cap Y^{[n]} = Z^{[n]} 
\end{equation}
where $Z := (\pr_n)_{i=0}^{n-1} \big(\psi_{i,n}(X) \cap \psi_{i,n}(Y)\big) =Z^{[n]}.$  
\end{cor}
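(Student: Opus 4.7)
The plan is to reduce the claim to a direct application of Proposition \ref{intintprop} (interleaving commutes with intersection) together with Theorem \ref{thm:DIF} (interleaving factorizations). Since $X^{[n]}$ and $Y^{[n]}$ are by definition $n$-fold interleavings of principal decimations, taking their intersection is exactly the case $m=2$ of Proposition \ref{intintprop}.

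First, I would unpack the definitions: by Definition \ref{def:interleaving-closure},
\[
X^{[n]} = (\pr_n)_{i=0}^{n-1} \psi_{i,n}(X), \qquad Y^{[n]} = (\pr_n)_{i=0}^{n-1} \psi_{i,n}(Y).
\]
Setting $X_i := \psi_{i,n}(X)$ for $0 \le i \le n-1$ and $X_{n+i} := \psi_{i,n}(Y)$ for $0 \le i \le n-1$, we can write $X^{[n]} = (\pr_n)_{i=0}^{n-1} X_{i}$ and $Y^{[n]} = (\pr_n)_{i=0}^{n-1} X_{n+i}$. Applying Proposition \ref{intintprop} with $m=2$ then yields
\[
X^{[n]} \cap Y^{[n]} \;=\; \bigcap_{j=0}^{1} (\pr_n)_{i=0}^{n-1} X_{jn+i} \;=\; (\pr_n)_{i=0}^{n-1} \bigl( \psi_{i,n}(X) \cap \psi_{i,n}(Y) \bigr) \;=\; Z.
\]

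Next I would verify that $Z = Z^{[n]}$, which is the other half of the stated equation. Since $Z$ is presented as an $n$-fold interleaving $Z = (\pr_n)_{i=0}^{n-1} W_i$ with $W_i := \psi_{i,n}(X) \cap \psi_{i,n}(Y)$, Theorem \ref{thm:DIF}(1) immediately gives $Z = Z^{[n]}$. (Equivalently, using Proposition \ref{prop:41}(2) one checks that $\psi_{i,n}(Z) = W_i$, whence $Z^{[n]} = (\pr_n)_{i=0}^{n-1} \psi_{i,n}(Z) = Z$.) Combining the two displays gives $X^{[n]} \cap Y^{[n]} = Z = Z^{[n]}$, as required.

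I do not anticipate any real obstacle here; the proof is essentially a bookkeeping reduction to Proposition \ref{intintprop}. The only mild subtlety is recognizing that the corollary packages two assertions at once — the formula for the intersection, and the fact that $Z$ is itself $n$-fold interleaving closed — both of which fall out instantly from the earlier structural results.
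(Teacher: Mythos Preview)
Your proof is correct and follows essentially the same approach as the paper: apply Proposition~\ref{intintprop} with $m=2$ and the choices $X_i=\psi_{i,n}(X)$, $X_{n+i}=\psi_{i,n}(Y)$ to obtain $X^{[n]}\cap Y^{[n]}=Z$, then observe $Z=Z^{[n]}$ because $Z$ is presented as an $n$-fold interleaving. The only cosmetic difference is that you invoke Theorem~\ref{thm:DIF}(1) explicitly for the last step, whereas the paper simply says ``by construction.''
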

\begin{proof} In Proposition \ref{intintprop}  take  $m=2$  and $n \ge 2$  and choose $X_{i} = \psi_{i,n}(X)$ and $X_{n+i} = \psi_{i,n}(Y)$ for $0 \le i \le n-1$. 
The left side of \eqref{eqn:interleave_intersect} is $X^{[n]} \cap Y^{[n]}$ and the right side is $Z$. Here $Z= Z^{[n]}$ holds
because $Z$ is defined as an $n$-fold interleaving of 
 $\psi_{i,n}(Z) :=  \psi_{i,n}(X) \cap \psi_{i,n}(Y)$  by construction.
(In general   $\psi_{i,n}(X \cap Y) \subseteq \psi_{i, n}(X) \cap \psi_{i,n}(Y) $, 
so that $ (X \cap Y)^{[n]} \subseteq Z^{[n]}$, and strict inequality can hold.)
\end{proof} 
\begin{rem}\label{rem:non-intersect} (Intersection of  general interleaving closures) 
For intersection of two interleaving closures  of different arities of  a single set $X$ we have, for all $m,n \ge 1$, 
\begin{equation}
X^{[\gcd(m,n)]}  \subseteq X^{[m]} \cap X^{[n]}.
\end{equation} 
Equality always holds trivially when $m=n$,  but need not hold when $m \ne n$. 
 As an example,  for  $m=2, n=3$  take $X=\{ \bx_1, \bx_2, \bx_3\} = \{ (010100)^{\infty}, (111111)^{\infty}, (110111)^{\infty} \}$. 
 Then $(01)^{\infty}$ is contained in both $X^{[2]}$  via the   $2$-fold interleaving $\psi_{0,2}(\bx_1) \pr \psi_{1,2}(\bx_2)$, and  $X^{[3]}$  via the $3$-fold interleaving 
$\psi_{0,3}(\bx_1) \pr \psi_{1,3}(\bx_1) \pr \psi_{2,3}(\bx_3)$. 
 We conclude  $X \subsetneq X^{[2]} \cap X^{[3]}$. 
Note this  example  is closed and weakly shift-stable, having  $S^6X = X$. 
\end{rem} 

%
%
\subsection{Shift action on interleavings}\label{sec:26a}

The one-sided shift map  acts as 
$$
S(a_0a_1a_2 a_3 \cdots)= a_1a_2a_3a_4\cdots.
$$
We show the one-sided shift $S$ action preserves the property of having an $n$-fold interleaving factorization.

\begin{prop}\label{prop:interleaving-shift}
{\rm (Interleaving and the shift map)} 
Suppose that $X $ has an $n$-fold interleaving factorization $X= X_0 \pr X_1  \pr \cdots \pr X_{n-2}\pr X_{n-1}$.

(1) The  one-sided shift map $S$ acts by 
\begin{equation}\label{eqn:shift_factorization}
S(X) = X_1 \pr X_2 \pr \cdots \pr X_{n-1} \pr S(X_0).
\end{equation}
Consequently 
\begin{equation}\label{eqn:iterated-shift-interleave}
S^{n}(X) = S(X_0)  \pr S(X_1)\pr \cdots \pr S(X_{n-2}) \pr S(X_{n-1}).
\end{equation}

(2) All  iterates  $S^{k}(X)$ possess $n$-fold interleaving factorizations
$$
S^{k}(X) = \psi_{k,n}(X) \pr \psi_{k+1, n}(X) \pr \cdots \pr \psi_{k+n-1,n}(X).
$$
\end{prop}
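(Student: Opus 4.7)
The plan is to prove (1) by checking it pointwise on individual elements of $X$, then extending to sets by taking unions, and to prove (2) by combining (1) iteratively with the shift–decimation commutation identity established in Proposition \ref{prop:decimation_shift}.

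For part (1), I would start with a single element $\bx = \bx_0 \pr \bx_1 \pr \cdots \pr \bx_{n-1}$ with $\bx_i \in X_i$. Writing $\bx_i = a_{i,0}a_{i,1}a_{i,2}\cdots$ and unpacking the definition of interleaving, the symbols of $\bx$ in order are $a_{0,0},a_{1,0},\ldots,a_{n-1,0},a_{0,1},a_{1,1},\ldots$. Applying $S$ deletes $a_{0,0}$, so positions $0,1,\ldots,n-2$ of $S(\bx)$ are $a_{1,0},a_{2,0},\ldots,a_{n-1,0}$ and position $n-1$ is $a_{0,1}$. Extracting the symbols in each residue class $\pmod n$ then shows that the $n$-fold interleaving factors of $S(\bx)$ are, in order, $\bx_1,\bx_2,\ldots,\bx_{n-1}, S(\bx_0)$. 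Taking the union over $\bx \in X$ yields the set identity \eqref{eqn:shift_factorization}. The consequence \eqref{eqn:iterated-shift-interleave} follows by iterating the cyclic rotation $n$ times: each shift moves the leftmost factor to the right (shifted by $S$), and after exactly $n$ applications every factor has moved through the right-most slot once, acquiring one application of $S$.

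For part (2), I would use Proposition \ref{prop:41}(1) to write the canonical factorization
\begin{equation*}
X = \psi_{0,n}(X) \pr \psi_{1,n}(X) \pr \cdots \pr \psi_{n-1,n}(X),
\end{equation*}
and then induct on $k$. For the base case $k=0$ this is just the canonical factorization. For the inductive step, assume $S^{k}(X) = \psi_{k,n}(X) \pr \psi_{k+1,n}(X) \pr \cdots \pr \psi_{k+n-1,n}(X)$ and apply part (1) to obtain
\begin{equation*}
S^{k+1}(X) = \psi_{k+1,n}(X) \pr \cdots \pr \psi_{k+n-1,n}(X) \pr S\bigl(\psi_{k,n}(X)\bigr).
\end{equation*}
By Proposition \ref{prop:decimation_shift}(2), $S(\psi_{k,n}(X)) = \psi_{k+n,n}(X)$, so the right-hand side is exactly $\psi_{k+1,n}(X) \pr \psi_{k+2,n}(X) \pr \cdots \pr \psi_{k+n,n}(X)$, closing the induction.

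The step that requires the most care is the bookkeeping in part (1): specifically, verifying that applying $S$ to the interleaved sequence indeed cycles the factor list and appends $S(\bx_0)$ at the end, rather than, say, $S$ acting on the last factor. This is purely combinatorial but easy to get off-by-one on; doing it explicitly at the level of the indexed symbols $a_{i,j}$ with $b_{i+kn}=a_{i,k}$ avoids confusion. Once part (1) is correctly pinned down, both the consequence and part (2) are immediate consequences of iteration together with the decimation–shift identity from Proposition \ref{prop:decimation_shift}.
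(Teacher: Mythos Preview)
Your proposal is correct and follows essentially the same overall strategy as the paper: establish \eqref{eqn:shift_factorization} first, then obtain \eqref{eqn:iterated-shift-interleave} and part (2) by iteration together with the identity $S(\psi_{k,n}(X)) = \psi_{k+n,n}(X)$ from Proposition \ref{prop:decimation_shift}. Your execution of part (1) is in fact slightly more direct than the paper's: you verify $S(\bx_0 \pr \cdots \pr \bx_{n-1}) = \bx_1 \pr \cdots \pr \bx_{n-1} \pr S(\bx_0)$ pointwise and take unions, whereas the paper proves the two set inclusions separately (forward via the extension property $S(X) \subseteq (S(X))^{[n]}$, reverse by explicitly constructing a preimage using the factorization hypothesis). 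Both arguments hinge on the same fact---that the factors in an interleaving product range independently---so the difference is stylistic rather than substantive.
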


\begin{proof}
(1), (2). It suffices to prove \eqref{eqn:shift_factorization}. 
The other assertions in (1) and assertion (2) 
then follow easily by induction on $k \ge 1$. 

To begin, for all infinite words $\bx \in \msrA^{\NN}$ we have 
\begin{equation}\label{eq:one_step}
 \psi_{j, n}(S\bx)  = \psi_{j+1,n}(\bx) \quad \mbox{for all} \quad j \ge 0.
\end{equation}
By Theorem \ref{thm:DIF} 
we have $X_i = \psi_{i,n}(X)$
for $0 \le i \le n-1$. We set  $X_n= \psi_{n,n}(X)$.
By  Proposition \ref{prop:decimation_shuffle} (2), 
$$\psi_{n,n}(X) = \psi_{0,n}(SX) = S^n \psi_{0,n}(X) = S(X_0).$$
We assert $S(X) = X_1 \pr X_2 \pr \cdots \pr X_{n-1} \pr S(X_0)$. 
We have  the inclusion
$$S(X) \subseteq (\pr_n)_{i=0}^{n-1} \psi_{i, n}(S(X)) = (\pr_n)_{i=0}^{n-1} \psi_{i+1,n}(X) = (\pr_n)_{i=1}^{n} X_i.$$

To show the opposite inclusion
$$X_1 \pr X_2 \pr \cdots \pr X_{n-1} \pr S(X_0)\subseteq SX,$$
let $\by= \by_1 \pr \by_2 \pr \cdots\pr  \by_n  \in X_1 \pr X_2 \pr \cdots \pr X_n$; then $\by_i \in X_i$ for $1 \le i \le n$.
For $\by_n \in X_n$ by definition there exists $\bx \in X$ such that $\psi_{0, n}(\bx) = x_0 \circ \by_n \in X_0$, for some $x_0$,
where $x_0 \circ \by_n$ denotes the concatenation of the letter $x_0$ and the infinite word $\by_n$. 
By the $n$-fold factorization hypothesis on $X$  one may choose this $\bx$ so that also $\psi_{i,n}(\bx) =\by_i$ holds
for $1 \le i \le n-1$. Now one checks using \eqref{eq:one_step} that
$$
\psi_{i, n}(S(\bx)) = \by_{i+1} \quad \mbox{for} \quad 0 \le i \le n-1.
$$
\end{proof}

\begin{prop}\label{prop:int-closure-shift}
{\rm (Shift map and $n$-fold interleaving closure) }
The shift map commutes with $n$-fold interleaving closure.
 For each $n \ge 1$, and a general set $X \subseteq \sA^{\NN}$, there holds 
\begin{equation}\label{eqn:shift-n-closure}
S(X^{[n]}) = (SX)^{[n]}.
\end{equation} 
\end{prop}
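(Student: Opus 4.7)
The plan is to reduce the identity $S(X^{[n]}) = (SX)^{[n]}$ to a direct index calculation, using the two already-established commutation results: Proposition \ref{prop:interleaving-shift}(1), which says how $S$ acts on an $n$-fold interleaving factorization, and Proposition \ref{prop:decimation_shift}, which gives $\psi_{i,n}(SX) = \psi_{i+1,n}(X)$ and $S(\psi_{j,n}(X)) = \psi_{j+n,n}(X)$. By Definition \ref{def:interleaving-closure} the set $X^{[n]}$ already carries the distinguished factorization
\begin{equation*}
X^{[n]} = \psi_{0,n}(X) \pr \psi_{1,n}(X) \pr \cdots \pr \psi_{n-1,n}(X),
\end{equation*}
so Proposition \ref{prop:interleaving-shift}(1) applies with $X_i = \psi_{i,n}(X)$.

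First I would apply the shift formula \eqref{eqn:shift_factorization} to this factorization, obtaining
\begin{equation*}
S(X^{[n]}) = \psi_{1,n}(X) \pr \psi_{2,n}(X) \pr \cdots \pr \psi_{n-1,n}(X) \pr S(\psi_{0,n}(X)).
\end{equation*}
Next I would use Proposition \ref{prop:decimation_shift}(1) to rewrite each inner factor as $\psi_{i,n}(X) = \psi_{i-1,n}(SX)$ for $1 \le i \le n-1$, and use Proposition \ref{prop:decimation_shift}(2) together with (1) on the last factor, giving
\begin{equation*}
S(\psi_{0,n}(X)) = \psi_{n,n}(X) = \psi_{n-1,n}(SX).
\end{equation*}
Substituting these $n$ identities back in produces
\begin{equation*}
S(X^{[n]}) = \psi_{0,n}(SX) \pr \psi_{1,n}(SX) \pr \cdots \pr \psi_{n-1,n}(SX),
\end{equation*}
which is exactly $(SX)^{[n]}$ by Definition \ref{def:interleaving-closure}.

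There is no real obstacle; the whole argument is essentially bookkeeping on the subscripts of decimation operators. The only point that needs a moment's care is verifying that the last factor produced by Proposition \ref{prop:interleaving-shift}(1), namely $S(\psi_{0,n}(X))$, really matches the ``shifted'' $(n-1)$st principal decimation of $X$, i.e.\ $\psi_{n-1,n}(SX)$; this is where both parts of Proposition \ref{prop:decimation_shift} are used in combination. Once that alignment is in place, the two interleaving factorizations of $S(X^{[n]})$ and $(SX)^{[n]}$ coincide factor by factor, and the conclusion is immediate.
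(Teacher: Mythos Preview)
Your proof is correct and follows essentially the same route as the paper's own argument: both apply Proposition \ref{prop:interleaving-shift}(1) to the defining factorization of $X^{[n]}$ and then use the decimation--shift identities of Proposition \ref{prop:decimation_shift} to recognize the result as $(SX)^{[n]}$. The only cosmetic difference is that the paper leaves the factors as $\psi_{1,n}(X),\ldots,\psi_{n,n}(X)$ and identifies this directly with $(SX)^{[n]}$, whereas you explicitly rewrite each factor as $\psi_{i-1,n}(SX)$; the content is identical.
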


\begin{proof}
By definition the $n$-fold interleaving closure  $X^{[n]}$ has an $n$-fold interleaving factorization. Applying parts  (1) and (2) 
of Proposition \ref{prop:interleaving-shift} 
we have
\begin{eqnarray*}
S(X^{[n]}) & =&  S\left(\psi_{0, n}(X) \pr \psi_{1, n}(X) \pr \cdots \pr \psi_{n-2, n}(X) \pr \psi_{n-1,n}(X) \right) \\
&=& \psi_{1,n} (X) \pr \psi_{2,n}(X) \pr \cdots \pr \cdots \pr \psi_{n-1,n}(X) \pr S\psi_{0,n}(X) \\
&=& \psi_{1,n} (X) \pr \psi_{2,n}(X) \pr \cdots \pr \cdots \pr \psi_{n-1,n}(X) \pr \psi_{n,n}(X) = (SX)^{[n]} . 
\end{eqnarray*} 
 \end{proof}

%
%
\subsection{Topological closure} \label{sec:26b}

Decimation and interleaving operations  and the shift operation all commute with  topological closure  in
$\sA^{\NN}$.

\begin{thm}\label{thm:closure 2} {\rm ($n$-fold interleaving closure)} \,
Given a subset $X$ of $\msrA^{\NN}$, let $\overline{X}$ denote its topological closure
in the shift topology (product topology) in $\sA^{\NN}$.  

(1) For each $n \ge 1$ and $j \ge 1$, 
$$
\psi_{j,n}(\overline{X}) = \overline{\psi_{j,n}(X)}. 
$$
In particular if $X$ is a closed set in $\msrA^{\NN}$
then each decimation  $X_{j,n}= \psi_{j,n}(X)$ is a closed set.

(2) 
For $X_0, X_1, \ldots, X_{n-1} \subseteq \sA^{\NN}$, there holds
$$
 (\pr_n)_{j=0}^{n-1} \overline{X}_j = \overline{(\pr_n)_{j=0}^{n-1} X_j }.
$$
In particular the $n$-fold interleaving of closed sets is a closed set.

(3) The $n$-fold interleaving closure operation  commutes with the closure operation on 
the product topology on $\sA^{\NN}$,  
$$
(\overline{X})^{[n]}  = \overline{X^{[n]}}. 
$$
(4) The shift operator commutes with topological closure, 
$$
S \overline{X} = \overline{SX}.
$$
\end{thm}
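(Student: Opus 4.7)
The plan is to deduce all four identities from a single principle: the space $\sA^{\NN}$ is compact (being a product of finite discrete spaces), so closed subsets are compact, and continuous images of compact sets are closed. Once each of the relevant maps—the decimation $\psi_{j,n}$, the interleaving map $\pr_n$, and the shift $S$—is verified to be continuous as a map between product spaces, each assertion reduces to the standard double inclusion: continuity gives one direction ($f(\overline{X})\subseteq \overline{f(X)}$), and compactness gives the other ($\overline{f(X)}\subseteq f(\overline{X})$, because $f(\overline{X})$ is already closed).

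For part (1), I would first observe that $\psi_{j,n}$ is continuous: a basic open neighborhood of $\psi_{j,n}(\bx)$ constrains only finitely many output coordinates, say the first $k$, and these depend only on the input coordinates $j,\,j+n,\ldots,j+(k-1)n$, so the preimage is open. Since $\overline{X}$ is compact, $\psi_{j,n}(\overline{X})$ is compact hence closed, containing $\psi_{j,n}(X)$ and therefore its closure; the reverse inclusion is just continuity. For part (2), the same scheme applies to the interleaving map $\pr_n:(\sA^{\NN})^n\to\sA^{\NN}$, since the $(i+kn)$th output coordinate depends only on the $k$th coordinate of the $i$th input, making $\pr_n$ continuous. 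Because the product topology on $(\sA^{\NN})^n$ satisfies $\overline{\prod_{j=0}^{n-1}X_j}=\prod_{j=0}^{n-1}\overline{X_j}$, the set $(\pr_n)_{j=0}^{n-1}\overline{X_j}$ is the continuous image of a compact set, hence closed, and the two-sided inclusion argument concludes.

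Part (3) is then a direct consequence of (1) and (2) applied to the defining formula $X^{[n]}=(\pr_n)_{i=0}^{n-1}\psi_{i,n}(X)$ from Definition~\ref{def:interleaving-closure}:
\[
(\overline{X})^{[n]}=(\pr_n)_{i=0}^{n-1}\psi_{i,n}(\overline{X})
\overset{(1)}{=}(\pr_n)_{i=0}^{n-1}\overline{\psi_{i,n}(X)}
\overset{(2)}{=}\overline{(\pr_n)_{i=0}^{n-1}\psi_{i,n}(X)}=\overline{X^{[n]}}.
\]
Part (4) follows by the same compactness-plus-continuity argument applied to the shift $S$, whose continuity is immediate since the $i$th output coordinate of $S(\bx)$ depends only on the $(i+1)$st input coordinate.

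There is no substantial obstacle here; the proof is essentially a bookkeeping exercise in point-set topology. The one point that deserves care is to emphasize that compactness is essential: for a general continuous map between non-compact spaces, $f(\overline{X})\subsetneq \overline{f(X)}$ can occur, so the argument genuinely uses finiteness of $\sA$. Everything else is then a routine application of this principle in sequence.
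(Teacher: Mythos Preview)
Your proof is correct and takes essentially the same approach as the paper: both rest on compactness of $\sA^{\NN}$ and continuity of the decimation, interleaving, and shift maps, with the paper carrying this out via ad hoc sequential/subsequence arguments in each part while you package it as the single principle that a continuous image of a compact set is closed in a Hausdorff target. Your presentation is cleaner and more systematic, but the underlying content is identical.
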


\begin{proof}
(1) Given a sequence $\{ \psi_{j,n}(\bx_k): k \ge 1 \}$ in $\psi_{j,n}(X)$, with each $\bx_k=x_{0,k} x_{1,k} x_{2,k} \cdots  \in X$, 
because  closed sets $X$ are compact in $\msrA^{\NN}$, there
exists  a convergent subsequence of the $\bx_k \in X$, having limit $\bx= x_0x_1 x_2 \cdots  \in X$, say. (Convergence is defined by
 eventual stability of each symbol $x_{\ell,k}$
as $k \to \infty$, having $x_{\ell, k}  = x_{\ell}$ for all sufficiently large $k$.)
 It is easy to see that if $\bx_k  \to \bx$ in $X$  then necessarily $\psi_{j,n}(\bx_k) \to \psi_{j,n}(\bx)$
 is a convergent subsequence in $\psi_{j,n}(X)$, establishing that  $\psi_{j,n}(X)$ is closed.
 
(2) For interleavings of closed sets  the convergence of symbols in a given position $\ell=n\ell'+j' $ in $X_0 \pr X_1 \pr \cdots \pr X_{n-1}$ 
depends only on $X_{j'}$, hence the closure property of $X$  is inherited from that of the individual factors $X_{j'}$.

(3) The closure equality  follows from (1), since  $X^{[n]}= \psi_{0,n}(X) \pr \psi_{1,n} \pr \cdots \pr \psi_{n-1,n}$
and   both sides of the equality add in all sequence limits taken  in each  symbol position separately.

(4) We have $\bx \in \overline{X}$ if there is  a sequence $\{ \bx_k: k \ge 1\}$ in $X$ converging to $\bx$. Then  the sequence $\by_k: = S\bx_k \in SX$ converges to $S\bx$ in $S \overline{X}$
so $S\overline{X} \subseteq \overline{SX}$. Take now $\by_k \in SX$ converging to $\bx$. By definition of $\sX$  there exists $\bx_k \in S$ with $S\bx_k=\by_k$. Since the alphabel $\sA$
is finite, infinitely many of the $\bx_k$  have a fixed letter $a_0$ as initial symbol. These  define a subsequence $\bx_{k_i}$ that converges in $X$ to a limit word $\bx$ and necessarily $S\bx= \by$.
Thus $\overline{SX} \subseteq S\overline{X}$.
\end{proof}

%
%
\section{Interleaving factorizations and divisibility} \label{sec:3}

We classify the possible values  of $n$ in $n$-fold interleaving factorizations for different $n$ of arbitrary  subsets $X \subseteq \sA^{\NN}$.

%
\subsection{Divisibility for interleaving factorizations }\label{sec:31a}

\begin{thm} \label{thm:lcm-factorization2} 
{\rm (Divisibility structure for interleaving factorizations) }
Let $\IC(X) = \{ n: \, X=X^{[n]} \}$. 

(1) If $n \in \IC(X)$ and $d$ divides $n$, then $d \in \IC(X)$. 

(2) If $m,n \in \IC(X)$ then their least common multiple $\lcm(m,n) \in \IC(X)$. 

(3) The  interleaving closure set $\IC(X)$ of $X$ has the structure of a  distributive lattice with respect to the divisibility partial order, being
closed under  the  join operation (least common multiple $\lcm$), and the meet operation (greatest common divisor $(\gcd)$).
It is  downward closed under divisibility, and  contains the  minimal element $1$.
\end{thm}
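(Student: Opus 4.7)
The plan is to derive all three parts as short consequences of two results already in hand: the composition identity $(X^{[m]})^{[n]} = X^{[\lcm(m,n)]}$ from Theorem \ref{thm:410}, and the extension property $X \subseteq X^{[n]}$ from Theorem \ref{thm:inclusion}(2). With these in place there is no real obstacle; the theorem is essentially a structural repackaging of Theorem \ref{thm:410}.

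I would start with part (2), which is the most immediate. Suppose $m, n \in \IC(X)$, i.e., $X = X^{[m]} = X^{[n]}$. Then Theorem \ref{thm:410} gives
$$X^{[\lcm(m,n)]} \;=\; (X^{[m]})^{[n]} \;=\; X^{[n]} \;=\; X,$$
so $\lcm(m,n) \in \IC(X)$. Part (1) I would prove by a sandwich argument. Assume $n \in \IC(X)$ and $d \mid n$, so that $\lcm(d,n) = n$. The extension property gives $X \subseteq X^{[d]}$, and applying it once more gives $X^{[d]} \subseteq (X^{[d]})^{[n]}$. Invoking Theorem \ref{thm:410} once more,
$$(X^{[d]})^{[n]} \;=\; X^{[\lcm(d,n)]} \;=\; X^{[n]} \;=\; X.$$
The chain $X \subseteq X^{[d]} \subseteq X$ forces $X^{[d]} = X$, proving $d \in \IC(X)$.

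For part (3) I would observe that parts (1) and (2) already certify $\IC(X)$ as a sublattice of the divisibility lattice $(\NN^{+}, \mid)$: (2) is closure under join (lcm), while closure under meet (gcd) follows because $\gcd(m,n)$ divides $m$, so (1) applied to $m \in \IC(X)$ yields $\gcd(m,n) \in \IC(X)$. Downward closure under divisibility is exactly (1). For the minimal element, note that $\psi_{0,1}$ is the identity map on $\sA^{\NN}$, so $X^{[1]} = \psi_{0,1}(X) = X$ for every $X$, giving $1 \in \IC(X)$ and showing $\IC(X)$ is nonempty. Finally, distributivity is inherited: the divisibility lattice $(\NN^{+}, \gcd, \lcm)$ is itself distributive (the standard identity $\gcd(a, \lcm(b,c)) = \lcm(\gcd(a,b), \gcd(a,c))$ is a routine consequence of unique factorization), and every sublattice of a distributive lattice is distributive.

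The only point worth a second look is the distributivity claim in (3), but it reduces to the classical number-theoretic identity above. Everything else is a two-line manipulation of Theorem \ref{thm:410}, which is where the real content lives.
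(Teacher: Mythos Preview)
Your proof is correct and follows essentially the same approach as the paper. The only cosmetic difference is in part (1): where you invoke Theorem \ref{thm:410} directly via $(X^{[d]})^{[n]} = X^{[\lcm(d,n)]} = X^{[n]}$, the paper instead cites the divisibility inclusion $X^{[d]} \subseteq X^{[de]}$ from Proposition \ref{prop:cdc}(3); both yield the same sandwich $X \subseteq X^{[d]} \subseteq X^{[n]} = X$.
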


%
%

\begin{proof}[Proof of Theorem \ref{thm:lcm-factorization2}]
(1) If $n \in \IC(X)$ then  $X=X^{[n]}.$ Suppose  $d$ divides $n$, so $n=de$.
Now $X \subseteq X^{[d]}$ by the extension property  of Theorem \ref{thm:inclusion}. 
However  $X^{[d]}  \subseteq X^{[de]}= X^{[n]}$
by Proposition \ref{prop:cdc} (2). Since $X^{[n]} =X$ we conclude $X^{[d]}=X$, so $d \in \IC(X)$.

(2)  Suppose $m,n \in \IC(X)$ so that  $X=X^{[m]}$ and $X= X^{[n]}$.
Then 
$$
X = X^{[n]} = (X^{[m]})^{ [n]} = X^{[\lcm(m,n)]} 
$$
where, reading from the left, the second equality substituted $X^{[m]}$ for $X$ and the last equality is Theorem \ref{thm:410}.
Thus $\lcm(m,n) \in \IC(X)$. 

 (3) The set $\IC(X)$ is downward closed under divisibility by (1).  If $m,n \in \IC(X)$  then  $\gcd(m,n) \in \IC(X)$ 
siince it divides $m$.
It is closed under the
join operation $\lcm$ by (2).  Thus $\IC(X)$   is  a  sublattice of the distributive lattice of integers $\NN_{+}$ under divisibility.
It always has minimal element $1$.
\end{proof}

A corollary of part (2) says that interleaving factors of infinitely factorizable sets are infinitely factorizable. 

\begin{cor}\label{cor:infinitely-factorizable}
Let $X$ be infinitely factorizable. Then every interleaving factor of $X$ is also infinitely factorizable. 
\end{cor}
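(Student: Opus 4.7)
The plan is to show that any interleaving factor $Y = \psi_{i,m}(X)$ (with $m \in \sN(X)$ and $0 \le i \le m-1$) satisfies $\sN(Y)$ is infinite. The key reduction is the implication: \emph{if $mn \in \sN(X)$ then $n \in \sN(Y)$.} Once this is known, it suffices to produce infinitely many $n$ with $mn \in \sN(X)$.

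To establish the key implication, I would invoke Proposition \ref{prop:cdc}(2) with the roles of $m$ and $n$ swapped, which gives the identity
\begin{equation*}
X^{[mn]} \;=\; (\pr_m)_{j=0}^{m-1}\bigl(\psi_{j,m}(X)^{[n]}\bigr).
\end{equation*}
If $mn \in \sN(X)$, then since $m$ divides $mn$, downward closure (Theorem \ref{thm:lcm-factorization2}(1)) gives $m \in \sN(X)$, so $X = X^{[m]}$ has its canonical $m$-fold factorization $X = (\pr_m)_{j=0}^{m-1}\psi_{j,m}(X)$. On the other hand, $X = X^{[mn]}$ together with the displayed identity gives a second $m$-fold interleaving factorization of $X$ with factors $\psi_{j,m}(X)^{[n]}$. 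By the uniqueness of $n$-fold interleaving factorizations (Theorem \ref{thm:DIF}(2)), these two factorizations must agree factor by factor:
\begin{equation*}
\psi_{j,m}(X)^{[n]} = \psi_{j,m}(X) \quad \text{for } 0 \le j \le m-1,
\end{equation*}
so that $n \in \sN(\psi_{j,m}(X))$ for every such $j$, and in particular for our chosen index $i$.

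To finish, I would exhibit infinitely many such $n$. Since $\sN(X)$ is infinite and closed under $\lcm$ (Theorem \ref{thm:lcm-factorization2}(2)), for every $k \in \sN(X)$ we have $\lcm(m,k) \in \sN(X)$. Writing $\lcm(m,k) = m \cdot n_k$ with $n_k := k/\gcd(m,k)$, this shows $m n_k \in \sN(X)$, hence $n_k \in \sN(Y)$ by the key implication. Because $n_k \ge k/m$, the values $n_k$ grow without bound as $k$ ranges over the infinite set $\sN(X)$, giving infinitely many distinct elements of $\sN(Y)$.

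There is no serious obstacle here: the argument is a direct combination of the composition formula from Proposition \ref{prop:cdc}(2), the uniqueness of interleaving factorizations from Theorem \ref{thm:DIF}(2), and the lattice structure of $\sN(X)$ from Theorem \ref{thm:lcm-factorization2}. The only place requiring a moment of care is the realization that the composition identity for $X^{[mn]}$ is symmetric in $m$ and $n$, so that the factorization pulling out the $m$-level decimations is legitimate.
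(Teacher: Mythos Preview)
Your proof is correct and follows essentially the same approach as the paper's. The paper invokes the shuffle identities (Proposition \ref{thm:shuffle}) directly to decompose the $ne$-fold factorization of $X$ as an $n$-fold interleaving of $e$-fold interleavings and then applies uniqueness, whereas you route through Proposition \ref{prop:cdc}(2), which is itself derived from the shuffle identities; the two arguments are equivalent reformulations of the same idea.
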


\begin{proof}
Suppose  $X$ is infinitely factorizable, and $X=(\pr_n)_{i=1}^{n-1}X_i$. We  show that $X_i$ is infinitely factorizable
 for each $0\leq i\leq n-1$. Since $X$ has an $m$-fold interleaving factorization for infinitely many $m$.
 Theorem \ref{thm:lcm-factorization} implies that $X$ has an $\lcm(m,n)$-fold interleaving factorization for infinitely many $m$. Thus, $X$ has an $ne$-fold interleaving factorization for infinitely many $e$. Moreover, for each such $e$, if $X=(\pr_{ne})_{k=0}^{ne-1}$, then the shuffle identities of Theorem \ref{thm:shuffle}, combined with uniqueness of $ne$-fold interleaving factorizations, imply that each $X_i$ has the $e$-fold interleaving factorization $X_i=(\pr_e)_{j=0}^{e-1}X_{i+jn}$.
\end{proof}

%
%
\subsection{Structure of interleaving factorizations } \label{sec:52a}


\begin{thm}\label{thm:sublattices-are-attained2}
{\rm (Converse divisibility structure  for interleaving factorizations)} 

Let $N\subseteq{\NN^{+}}$ be a nonempty set with the following properties:
\begin{enumerate}
    \item[(1)] If $n\in N$ and $d$ divides $n$, then $d\in N$. 
    \item[(2)]  If $m,n\in N$, then $\lcm(m,n)\in N$.
\end{enumerate}

If  the alphabet $\sA$ has at least two letters,  then  $N=\IC(X)$ for some $X\subseteq\sA^{\NN}$.
\end{thm}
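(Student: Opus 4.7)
The plan is to construct $X\subseteq\sA^{\NN}$ with $\IC(X)=N$ by cases on the structure of $N$, starting from a seed set with $\IC=\{1\}$ and climbing to any valid $N$ via self-interleaving. Fix $Y=\{(01)^\infty,(10)^\infty\}\subset\{0,1\}^\NN$. A direct check shows $\IC(Y)=\{1\}$: for each $n\ge 2$, either $n$ is even, in which case $\psi_{i,n}(Y)=\{0^\infty,1^\infty\}$ and the constant sequence $0^\infty\in Y^{[n]}\setminus Y$; or $n$ is odd $\ge 3$, in which case $\psi_{i,n}(Y)=Y$ so $Y^{[n]}=Y^{(\pr n)}$ contains an interleaving such as $(01)^\infty\pr(10)^\infty\pr(01)^\infty\pr\cdots\pr(01)^\infty$ of minimal period $2n>2$, which is not in $Y$. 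Either way $Y\subsetneq Y^{[n]}$, so $n\notin\IC(Y)$.

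For the finite case $N=\{d:d\mid n_0\}$ with $n_0=\max N$, take $X=Y^{(\pr n_0)}$. Using $\psi_{i,n_0}(X)=Y$ for $0\le i<n_0$ (Theorem~\ref{thm:DIF}(2)), Proposition~\ref{prop:cdc}(2) yields the key identity
\[
X^{[n_0K]} \;=\; (\pr_{n_0})_{i=0}^{n_0-1}\bigl(\psi_{i,n_0}(X)^{[K]}\bigr) \;=\; (Y^{[K]})^{(\pr n_0)}
\]
for every $K\ge 1$. By uniqueness of $n_0$-fold interleaving factorizations (Theorem~\ref{thm:DIF}(2)), the equality $X=X^{[n_0K]}$ holds iff $Y=Y^{[K]}$ iff $K\in\IC(Y)=\{1\}$, forcing $K=1$. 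Combining this with Theorem~\ref{thm:lcm-factorization2}, any $m\in\IC(X)$ satisfies $\lcm(m,n_0)\in\IC(X)$, so $\lcm(m,n_0)=n_0$ and $m\mid n_0$; the reverse inclusion follows from downward closure. Hence $\IC(X)=N$.

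For infinite $N$, the structural analysis gives $N=\{n:v_p(n)\le e_p \text{ for every prime } p\}$ with $e_p\in\{0,1,2,\ldots,\infty\}$ and at least one $e_p=\infty$. Write $P_\infty=\{p:e_p=\infty\}$ and $P_{\text{fin+}}=\{p:0<e_p<\infty\}$. When $P_{\text{fin+}}$ is finite (which forces $P_\infty\neq\emptyset$), set $n_0=\prod_{p\in P_{\text{fin+}}}p^{e_p}\in\NN^+$ and $N_1=\{k\in\NN^+:v_p(k)=0 \text{ for all } p\notin P_\infty\}$. Then $N_1$ is multiplicatively closed, so $Y_1=\{\bx\in\sA^\NN:\bx \text{ is purely periodic with period in } N_1\}$ has $\IC(Y_1)=N_1$: an $n$-fold interleaving of periodic sequences in $Y_1$ with periods $p_i\in N_1$ has period dividing $n\cdot\lcm(p_i)\in N_1$ by multiplicative closure, while for $m\notin N_1$ the $m$-fold self-interleaving of a nonconstant period-$p$ element with $p\in P_\infty$ has minimal period $mp\notin N_1$. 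Setting $X=Y_1^{(\pr n_0)}$ and repeating the finite-case analysis with $\IC(Y_1)=N_1$ in place of $\{1\}$ gives $\IC(X)=\{m:m\mid n_0k \text{ for some }k\in N_1\}$, which equals $N$ by a prime-by-prime check on $v_p$-values.

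The main obstacle is the remaining case where $P_{\text{fin+}}$ is infinite, so the formal product $\prod_{p\in P_{\text{fin+}}}p^{e_p}$ is not a positive integer and the construction above collapses; the prototype is $N=\{\text{squarefree integers}\}$ with $e_p=1$ for all $p$. For such $N$ I would choose a cofinal divisibility chain $n_1\mid n_2\mid\cdots$ in $N$ with $\bigcup_k\{d:d\mid n_k\}=N$, form the finite-level sets $X_k=Y^{(\pr n_k)}$ with $\IC(X_k)=\{d:d\mid n_k\}$ via the finite case, and realize $X$ as a suitable limit of the $X_k$. The technical challenge is that $\IC$ need not commute with direct or inverse limits; the key verification is that for each $m\notin N$ an explicit witness $\bz\in X_k^{[m]}\setminus X_k$ at some finite stage $k$ persists in the limit, ensuring $X\ne X^{[m]}$ and hence $\IC(X)\subseteq N$, while the inclusion $N\subseteq\IC(X)$ follows because each $n\in N$ divides some $n_k$ and hence lies in $\IC(X_k)$, which should propagate to $\IC(X)$.
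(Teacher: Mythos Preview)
Your proposal has a genuine gap: the case where $P_{\text{fin+}}$ is infinite (e.g.\ $N=\{\text{squarefree integers}\}$) is not proved. You yourself flag this, writing only that you ``would choose a cofinal divisibility chain'' and ``realize $X$ as a suitable limit,'' while acknowledging that $\IC$ need not commute with such limits. That is the whole difficulty, and nothing you wrote resolves it. Until you specify which limit (union of the $X_k$? intersection? something else?) and verify both inclusions for $\IC$ of that limit, the theorem is unproved in this case. There is also a small error in your seed-set check: for odd $n=3$ the interleaving $(01)^\infty\pr(10)^\infty\pr(01)^\infty$ equals $(01)^\infty\in Y$, not a sequence of period $2n$; you need a different witness such as $(01)^\infty\pr(01)^\infty\pr(10)^\infty$.

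The paper avoids all of this case analysis with a single uniform construction that handles every admissible $N$ at once. Start from $X=\{0^\infty,1^\infty\}$, observe that $X^{[n]}$ is exactly the set of sequences with period dividing $n$, pick any cofinal chain $\ell_1\mid\ell_2\mid\cdots$ in $N$, and set $\oX=\bigcup_j X^{[\ell_j]}$, which is precisely the set of periodic sequences whose period lies in $N$. For $n\in N$ one gets $\oX^{[n]}=\oX$ directly from Theorem~\ref{thm:410}: any finitely many elements of $\oX$ lie in a common $X^{[\ell_j]}$ with $n\mid\ell_j$, and $(X^{[\ell_j]})^{[n]}=X^{[\lcm(n,\ell_j)]}=X^{[\ell_j]}$. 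For $n\notin N$ any sequence of exact period $n$ lies in $X^{[n]}\subseteq\oX^{[n]}$ but not in $\oX$. This sidesteps the prime-by-prime bookkeeping and the limit issue entirely, and it is worth comparing: your bottom-up approach via self-interleaving of a rigid seed works cleanly for finite $N$ but does not scale, whereas the paper's top-down description of $\oX$ as ``all periodic sequences with period in $N$'' makes both directions of $\IC(\oX)=N$ almost immediate.
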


%
%

\begin{proof}
Given a set $N$ satisfying (1), (2) we construct a set $\oX$ on   $\sA=\{0,1\}$ with $\sN(\oX) = N$. Enumerate the elements of $N$ as $n_1,n_2,\ldots,$. Let $\ell_1=n_1$, and for $i>1$, let $\ell_i=\lcm(n_1,\ldots,n_i)$. 
Notice for $i \le j$, that $\ell_j=\lcm(\ell_i,\ell_j)$, hence for any set $X\subseteq\sA^{\NN}$ we have
$X^{[\ell_i]}\subseteq \left(X^{[\ell_i]}\right)^{[\ell_j]}= X^{[\lcm(\ell_i, \ell_j)]} =X^{[\ell_j]}$, using  Theorem \ref{thm:410}.
 Thus, $\left(X^{[\ell_j]}\right)_j$ is an increasing sequence of sets.

Choose  $X=\{0^{\infty},1^{\infty}\}$. Notice that for any $n\in\NN$, $X^{[n]}$ is precisely the set of all sequences in $\sA$ that are periodic with period dividing $n$. 
Now set 
$\oX :=\lim_{j\to\infty}X^{[\ell_j]}=\bigcup_{j=1}^{\infty}X^{[\ell_j]}, $ 
so that $\oX$ is the set of sequences in $\sA$ that are periodic and have a period $p\in N$ (since $N$ is precisely the set $\{n:n|\ell_j\text{ for some }j\geq1\}$). 

{\bf Claim.} $N=\IC(\oX)$. 

  (1) We show that if $n \in N$, then  $\oX^{[n]}=\oX$. We already know $\oX\subseteq\oX^{[n]}$. Let $\bx\in\oX^{[n]}$. Then $\bx=(\pr_n)_{i=0}^{n-1}\bx_i$ for $\bx_1,\ldots,\bx_n\in\oX$. Since there are finitely many of these $\bx_i$, there is an $\ell_j$ large enough that $\bx_1,\ldots,\bx_n\in X^{[\ell_j]}$. Choose $\ell_j$ with $j$ large enough that $n|\ell_j$. Then $\lcm(n,\ell_j)=\ell_j$, so $X^{[\ell_j]}$ is closed under $n$-fold interleaving, and thus $\bx\in X^{[\ell_j]}\subseteq\oX$. Hence 
$\oX^{[n]}=\oX$, and so $n\in\IC(\oX)$.

   (2) We show that if $n\notin N$, then  $\oX^{[n]}\neq\oX$. Since $X\subseteq\oX$, we have $X^{[n]}\subseteq\oX^{[n]}$ by the extension property in Theorem \ref{thm:inclusion}. Let $\bx$ be any sequence in $\sA$ that is periodic with period $n$. Then $\bx\in X^{[n]}$, and so $\bx\in \oX^n$. However, for any $\ell_j$
we have $\ell_j\in N$ by the structure of $N$, and since $M$ is closed under divisibility, $n\notin N$ implies $n$ does not divide $\ell_j$; hence $\bx\notin X^{[\ell_j]}$. Since this is the case for all $\ell_j$, $\bx\notin\oX$, and so 
$n\notin\IC(\oX)$.
\end{proof}

%
%

\begin{rem}\label{rem:53}\label{rem:Nsh} 
The sets $\oX$ constructed in the proof  of Theorem \ref{thm:sublattices-are-attained2} are all  shift-invariant: $S\oX= \oX$. 
To show this, we note that  a  word $\bx$ on alphabet $\sA= \{0,1\}$ is in $\oX$ if and only if it is fully periodic with a minimal period $p$ 
belonging to $N\subseteq \NN^{+}$, since
$N$ is downward closed under divisibility. The word $S\bx$ is also periodic with the same period,
so $S\bx \in \oX$,  hence $S\oX \subseteq \oX$. Since $S^p\bx =\bx$, we have $\by= S^{p-1}\bx$ is periodic with the  same period , 
so $\by \in \oX$, and   $S\by= S^p \bx= \bx \in S\oX$. It follows that $S\oX= \oX$.
\end{rem}

%
%
\subsection{Divisibility for self-interleaving factorizations } \label{sec:32a}

\begin{defn}\label{def:self-interleaving}
An  $n$-fold interleaving  factorization $X = (\pr_n)_{i=0}^{n-1} X_{i,n} $ 
 is  {\em self-interleaving} (or {\em $n$-fold self-interleaving}),  if all factors 
are identical, i.e. $X_{i,n}= X_{0,n}$ holds for $1 \le i \le n-1$.  
We sometimes write $Z_n := X_{0,n}$ for the unique  factor in this case. 
\end{defn}

There exist many sets $X$  for which every interleaving factorization is  a  self-interleaving. 
We will  show later,  in   Proposition \ref{thm:shift-self-factorization}, that if $X$ is shift-invariant, then 
 $X=X^{[n]}$ implies that  $X_{0,n} = X_{i.n}$ holds for all $n \ge 1$.
In addition there  the exist  examples with   $X$  having an $n$-fold self-interleaving, so that 
$X_{0, n} = X_{in}$ for $0\le i \le n-1$, 
but with $X_{0,n} \ne X_{i, n}$ for all  $i \ge n$, see Example \ref{exmp:bad-self-interleave}. 
The latter sets  $X$ can  have a mixture of self-interleaving factorizations 
and non-self interleaving factorizations.

We show  set of values of $n$ for which a given $X$ has an  $n$-self-interleaving has 
 divisibility properties parallel to those described in Theorem \ref{thm:lcm-factorization}.


\begin{thm} \label{thm:lcm-self-factorization} 
{\rm (Structure of  self-interleaving closure sets) }\\
Let $\IC_{\self}(X) = \{ n \ge 1: \, X=(\pr_n)_{i=0}^{n-1}Z_n \text{ for some } Z_n \subseteq\sA^{\NN}\}$. Then $\IC_{\self}(X)$ is nonempty and 
has the following properties.

(1) If $n \in \IC_{\self}(X)$ and $d$ divides $n$, then $d \in \IC_{\self}(X)$.

(2) If $m,n \in \IC_{\self}(X)$ then their least common multiple $\lcm(m,n) \in \IC_{\self}(X)$.

 Conversely,  if a subset $N \subseteq \NN^{+}$ is nonempty 
and has properties (1) and (2), then there exists $X \subseteq \sA^{|nn}$ with $N = \IC(X)$. 
\end{thm}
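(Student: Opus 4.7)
The plan is to follow the template of Theorem \ref{thm:lcm-factorization2} while taking extra care to preserve equality of the interleaving factors at each level. Parts (1) and (2) will be proved separately: (1) is a one-line application of the shuffle identity of Theorem \ref{thm:shuffle}, while (2) requires showing that the level-$\ell$ principal decimations of $X$ are all equal when $\ell = \lcm(m,n)$ and $X$ is both $m$-fold and $n$-fold self-interleaving. The converse will reuse verbatim the construction in the proof of Theorem \ref{thm:sublattices-are-attained2}, combined with the shift-invariance of that construction (Remark \ref{rem:Nsh}) and the forward-referenced Proposition \ref{thm:shift-self-factorization}.

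For (1), suppose $X = (\pr_n)_{i=0}^{n-1} Z$ and $n = de$ with $d \mid n$. Applying the shuffle identity with every input equal to $Z$ yields
\[
X \;=\; (\pr_n)_{k=0}^{n-1} Z \;=\; (\pr_d)_{i=0}^{d-1}\bigl((\pr_e)_{j=0}^{e-1} Z\bigr) \;=\; (\pr_d)_{i=0}^{d-1} Z^{(\pr e)},
\]
which exhibits a $d$-fold self-interleaving of $X$ with factor $Z^{(\pr e)}$, so $d \in \IC_{\self}(X)$.

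For (2), let $X = (\pr_m)_{i=0}^{m-1} Y = (\pr_n)_{j=0}^{n-1} W$ both be self-interleaving factorizations, and set $\ell := \lcm(m,n)$. Theorem \ref{thm:lcm-factorization2}(2) gives $\ell \in \IC(X)$, so $X = (\pr_\ell)_{k=0}^{\ell-1} \psi_{k,\ell}(X)$, and it suffices to prove $\psi_{k,\ell}(X) = \psi_{k+1,\ell}(X)$ for every $0 \le k \le \ell - 2$. Writing $k = i + jm$ with $0 \le i \le m-1$, Proposition \ref{prop:decimation_shuffle} yields $\psi_{k,\ell}(X) = \psi_{j,\ell/m}\bigl(\psi_{i,m}(X)\bigr)$, and by Theorem \ref{thm:DIF}(2) the $m$-fold self-interleaving of $X$ forces $\psi_{i,m}(X) = Y$ for every $0 \le i \le m-1$. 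Hence $\psi_{k,\ell}(X)$ is constant on each $m$-block $[jm,\, jm + m - 1]$, and by the symmetric argument it is constant on each $n$-block $[jn,\, jn + n - 1]$. Therefore $\psi_{k,\ell}(X)$ can change between $k$ and $k+1$ only if $k+1$ is simultaneously a multiple of $m$ and of $n$, i.e.\ a multiple of $\ell$, and this never occurs for $0 \le k \le \ell - 2$. Thus all level-$\ell$ factors coincide and $\ell \in \IC_{\self}(X)$. This block-equivalence step is the one genuinely new technical point.

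For the converse, given a nonempty $N \subseteq \NN^{+}$ satisfying (1) and (2), I would use exactly the set $\oX$ constructed in the proof of Theorem \ref{thm:sublattices-are-attained2}: on $\sA = \{0,1\}$, choose an exhausting chain $\ell_1 \mid \ell_2 \mid \cdots$ in $N$ and set $\oX := \bigcup_{j \ge 1} \{0^{\infty}, 1^{\infty}\}^{[\ell_j]}$. That proof already establishes $\IC(\oX) = N$, and Remark \ref{rem:Nsh} observes that $\oX$ is shift-invariant. Invoking Proposition \ref{thm:shift-self-factorization}, every interleaving factorization of a shift-invariant set is automatically self-interleaving; combined with the trivial inclusion $\IC_{\self}(\oX) \subseteq \IC(\oX)$ this gives $\IC_{\self}(\oX) = \IC(\oX) = N$, completing the converse.
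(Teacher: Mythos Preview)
Your proof is correct. Parts (1) and the converse follow the paper's argument essentially verbatim: the shuffle identity for (1), and for the converse the shift-invariant construction $\oX$ of Theorem~\ref{thm:sublattices-are-attained2} together with Remark~\ref{rem:Nsh} and Proposition~\ref{thm:shift-self-factorization}.

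For part (2), however, your route is genuinely cleaner than the paper's. The paper first reduces to the case $\gcd(m,n)=1$ by an arithmetic trick (splitting $d=\gcd(m,n)$ as $d=ef$ with $e\mid m$, $f\mid n$, $\gcd(m/e,n/f)=1$, and then invoking part (1) to pass to self-interleavings of coprime orders $m/e$ and $n/f$), and only then runs the block-connectivity argument at level $mn$. You bypass this reduction entirely: working directly at level $\ell=\lcm(m,n)$, you observe via Proposition~\ref{prop:decimation_shuffle} that $\psi_{k,\ell}(X)=\psi_{\lfloor k/m\rfloor,\,\ell/m}(Y)$ is constant on each $m$-block $[jm,jm+m-1]$ and likewise on each $n$-block, so a jump between consecutive indices $k$ and $k+1$ would require $m\mid(k+1)$ and $n\mid(k+1)$ simultaneously, hence $\ell\mid(k+1)$, which is impossible in the range $0\le k\le\ell-2$. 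This handles arbitrary $m,n$ in one pass and makes the combinatorial reason for the result more transparent; the paper's coprime reduction is unnecessary overhead.
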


\begin{proof}
(1) 
If $d$ divides $n$ we  have $n=de$ and now
$X= (\pr_n)_{k=0}^{de-1} Z_n$ and $Z_n=  \psi_{k, de}(X)$ for $0 \le k \le de-1$.
By the shuffle product identities in Theorem \ref{thm:shuffle}, 
$$
X=  (\pr_d)_{i=0}^{d-1} ((\pr_e)_{j=0}^{e-1}(X_{jd+i}))= (\pr_d)_{i=0}^{d-1} \left((\pr_e)_{j=0}^{e-1} Z_n \right).
$$
We deduce $X = (\pr_d)_{i=0}^{d-1} Z_d$ where $Z_d = (\pr_e)_{j=0}^{e-1} Z_n$,
so $X$ has a  $d$-fold self-interleaving.

(2) Suppose that $X$ has both an $n$-fold and an $m$-fold self-interleaving factorization.
We wish to show it has an $\lcm(m,n)$-fold  self-interleaving  factorization.
Let $d = \gcd(m,n)$, and recall that there exist $e,f$ with $e|m, f|n$ having $d=ef$ and $\gcd(\frac{m}{e}, \frac{n}{f}) =1$
(shown in the proof of Theorem \ref{thm:410}). 
By (1) the set is of self-interleaving factorizations is downward closed under divisibility, so that it has an $\frac{m}{e}$-fold self-interleaving
factorization and an $\frac{n}{f}$-fold self-interleaving factorization,
and now $\lcm(\frac{m}{e}, \frac{n}{f}) =\frac{mn}{ef} = \frac{mn}{d} = \lcm(m,n)$.
We have therefore reduced  proving  (2) 
 to  proving it in the special  case where $\gcd(m,n)=1$,
with  $\lcm(m,n) =mn$.

In this case we are given that $X$ has an $m$-fold and an $n$-fold self-interleaving factorization. 
We now have $\gcd(m,n)=1$   so by Theorem  \ref{thm:lcm-factorization} we have an $mn$-fold interleaving factorization $X= (\pr_{mn})_{k=0}^{mn-1} X_{k, mn}$. 
We wish to show it is self-interleaving, i.e. that 
\begin{equation} \label{eqn:self-int-mn} 
X_{k_1, mn} = X_{k_2, mn} \quad \mbox{ for}  \quad 0 \le k_1< k_2 \le mn-1. 
\end{equation} 
We assert  that  for each  $0 \le i \le n-1$, 
\begin{equation}\label{eqn:self-int-m}
X_{j_1+im, mn} = X_{j_2+ im, mn} \quad \mbox{for} \quad 0 \le j_1 < j_2 \le m-1.
\end{equation}
To see this, note that by the shuffle identities in Theorem \ref{thm:shuffle},
$$
 (\pr_{mn})_{k=0}^{mn-1} X_{k, mn}=(\pr_m)_{j=0}^{m-1}\Big((\pr_n)_{i=0}^{n-1}X_{j+im,mn}\Big).
$$
Since $m$-fold factorizations are unique, the right-hand side is a self-interleaving factorization, so for all $0\leq j_1<j_2\leq m-1$, $(\pr_n)_{i=0}^{n-1}X_{j_1+im,mn}=(\pr_n)_{i=0}^{n-1}X_{j_2+im,mn}$. This implies, again by uniqueness, that $X_{j_1+im,mn}=X_{j_2+im,mn}$ for all $0\leq i\leq n-1$.

Similarly, using the shuffle identity with the $n$-fold interleaving on the outside and the $m$-fold interleaving on the inside, we obtain for $0 \le i_1 < i_2 \le n-1$ that
for each $0 \le j \le m-1$, 
\begin{equation}\label{eqn:self-int-n}
X_{i_1+jn, mn} = X_{i_2+ jn, mn} \quad \mbox{for} \quad 0 \le i_1 < i_2 \le n-1.
\end{equation}

Now we assert that when $\gcd(m,n)=1$ that \eqref{eqn:self-int-m} and \eqref{eqn:self-int-n}  imply \eqref{eqn:self-int-mn}.
Now  \eqref{eqn:self-int-m} implies the equalities for all members in  consecutive values of $k$ in groups  of length $m$, 
$B_i =\{k= j + in: 0 \le j \le m-1\}$ (for fixed $i$)  but with no connection between between  blocks for different $i$. Now \eqref{eqn:self-int-n} implies
the same for blocks of consecutive values of $k$ in groups of length $n$, $C_j = \{ k= i+ jn: 0\le i \le n-1\}$ (for fixed $j$). Now the condition $\gcd(m,n)=1$
implies that for each $0 \le i \le m-2$ some group  $C_j$ includes members of  both $B_i$ and $B_{i+1}$, because the broken connection between
blocks $B_i$  is between $k\equiv m-1 (\bmod \, m)$ and  $k \equiv 0 (\bmod \, m)$, and all  multiples $jn \not\equiv 0 (\bmod \, m)$ for $1 \le j \le m-1$.   
Hence we get enough pairwise equalities to force \eqref{eqn:self-int-mn} to hold.
(This argument for equality of different $X_{k, mn}$ stops at position $mn$, where the groups $B_i$ and $C_j$ both line up to create a barrier going to higher $k$.)  This proves property (2) when $\gcd(m,n)=1$.

For the converse, it remains to show that if a subset $N \subseteq \NN^{+}$ is nonempty 
and has properties (1) and (2), then there exists $X \subseteq \sA^{\NN}$ with $N = \IC_{self}(X)$. For this, 
we use the fact that the sequences $\oX$ constructed in Theorem \ref{thm:sublattices-are-attained2}. 
that achieve $N=\IC(\oX) $ are shift-invariant, see Remark \ref{rem:53}. 
Now Proposition  \ref{thm:shift-self-factorization} (which will be proved in Section \ref{sec:30}) asserts that any shift- invariant $X$ has the 
property that  all of its interleaving factorizations will be self-interleaving factorizations. Thus, $\IC(\oX)=\IC_{self}(\oX)$. 
We have already  shown in the proof of Theorem \ref{thm:sublattices-are-attained2} that $\IC(\oX)=N$.
\end{proof}

\begin{exmp}\label{exmp:bad-self-interleave} 
For a general set $X$ the  set of $n$ giving self-interleaving factorization
 can be a strict subset of all interleaving factorizations of $X$. 
 For the binary alphabet $\sA = \{ 0,1\}$, 
take $X= \{ 00 \{0,1\}^{\NN}\} $, i.e. all infinite words beginning with $00$.  

We first show  $X$ has a  $n$-fold interleaving factorization for all $n \ge 1$, so $\IC(X)=\NN_{+}$. For $n=1$ and  $n=2$ the factorization is  self-interleaving
with $X_{0,2} = X_{1,2} = \{ 0 \{ 0,1\}^{\NN} \}$.
(Note that for $j \ge 2$ one has  $X_{j, 2} =  \{ 0, 1\}^{\NN}$.) 
%
%
In contrast we show $\IC_{\self} (X) = \{1, 2\}$ is finite.
For  $n \ge 3$ its  interleaving factorization has  $X_{0,n} = X_{1,n} = \{ 0 \{ 0,1\}^{\NN} \}$,  
while  $X_{j, n} = \{ 0, 1\}^{\NN}$ is the full shift, for all $j \ge 2$, so it is not self-interleaving.   
\end{exmp}

 %
%
\section{Infinitely factorizable closed  subsets of  $\msrA^{\NN}$
} \label{sec:4}

\begin{defn} \label{def:infinitely-factorizable}
A subset $X \subseteq \msrA^{\NN}$ is {\em infinitely factorizable}
(under interleaving) if it
has an $n$-fold interleaving factorization 
$$
X= X^{[n]} = \psi_{0,n}(X) \pr \psi_{1,n}(X) \pr \cdots \pr \psi_{n-1, n}(X)
$$
for infinitely many  $n \ge 1$. 
\end{defn}

%
\subsection{Characterization  of infinitely factorizable closed sets} \label{subsec:41}

We now characterize  infinitely factorizable closed sets $X$ by the properties given  in  Theorem \ref{thm:infinite_factor}.
Property (iii) shows there are uncountably many different infinitely factorizable closed sets when the
alphabet size  $|\sA| \ge 2$.


\begin{proof}[Proof of Theorem \ref{thm:infinite_factor}] 
We prove $(iii) \Rightarrow (ii) \Rightarrow (i) \Rightarrow (iii)$.

$(iii) \Rightarrow (ii)$. Suppose property $(iii)$ holds. Write an element of $X$ as 
$\bx =x_0 x_1x_2 \cdots$. 
Property (iii) says that the allowable symbols  in location $x_k$ of 
$\bx$ may be chosen arbitrarily while  holding all other $x_j,$ $j \ne k$ fixed.
In consequence all elements of $X_k=\psi_{k, n}(X)$ may be chosen arbitrarily from $\msrA_k$ while holding  all the other $X_{j}= \psi_{j,n}(X)$ constant.
This may be done for each value of $k$,  which
implies property $(ii)$ holds.

$(iii) \Rightarrow (ii)$. Suppose property $(iii)$ holds, and let $n\geq1$. Then, using Proposition \ref{prop:41}(2), we have,
\begin{eqnarray*}
X &=& \{\bx\in\sA^{\NN}:x_k\in\sA_k\text{ for all }k\geq0\}
\\
&=&\{\bx\in \sA^{\NN}:x_{j+kn}\in\sA_{j+kn}\text{ for all }k\geq0,~0\leq j\leq n-1\}
\\
&=&\{\bx\in\sA^{\NN}:\psi_{j,n}(\bx)\in\prod_{k=0}^{\infty}\sA_{j+kn}\text{ for all }0\leq j\leq n-1\}
\\
&=& (\pr_n)_{j=0}^{n-1}\prod_{k=0}^{\infty}\sA_{j+kn},
\end{eqnarray*}
which is an $n$-fold interleaving factorization.

$(ii) \Rightarrow (i)$. Immediate.  

$(i) \Rightarrow (iii)$.   We prove the contrapositive. Suppose
Property $(iii)$ does not hold for $X$, we are to show property $(i)$ does not hold.
Let $\msrA_k$ denote the  letters  that occur in the $k$th position of some word in $X$;
it is a finite nonempty subset of the (finite) alphabet $\msrA$. For each $k \ge0$  all letter patterns $\sA_k \times \sA_{k+1} \cdots \times \sA_{k +\ell}$
may occur for each $\ell \ge 1$, then by the assumption $\sA$ is closed, we would have  $X= \prod_{k=0}^{\infty} \msrA_k$ which has Property (iii), contradicting
our assumption.  Therefore there must exist some finite $k, \ell \ge 1$ and a
finite set of consecutive $\sA_k, \sA_{k+1}, \sA_{k+2}, ... \sA_{k+\ell}$  such that there is a block
$a_k a_{k+1} \cdots a_{k+\ell}$ with each $a_{k+i} \in \msrA_{k+i}$ for $0\le i \le \ell$  that does not occur in any element of $X$.
We call this situation  a  \emph{ $(k , \ell)$-missing-configuration}. 

To show Property (i) does not hold for this  $X$ we argue by contradiction.
If  Property (i) held for $X$, then there would exist some $n \ge k+\ell+1$ such that it had an $n$-fold interleaving factorization
$$
X= \psi_{0,n}(X) \pr \psi_{1,n}(X)\pr \cdots \pr \psi_{n-1, n}(X).
$$
Each word $\bx \in X$ has  symbol $x_{k+i}$ in position $k+i$ lying as the first symbol in a word in the $n$-decimation  
set $ \psi_{k+i,n}(X)$. We can find an infinite word, call it  $\bw (k+i)  \in X$ such that  in position
$j := k+i$ it has the symbol in position $k+i$ being  $a_i \in \msrA_{k+i}$, for $0 \le i \le \ell$
(by definition of $\msrA_{k+i}$).  For all remaining positions, $0 \le j \le n-1$,  with $j \not\in \{ k , k+1, \cdots , k+\ell\}$ we pick a word $\bw (j) \in X$ arbitrarily.  

Now the symbol sequence  
$\bw :=\pr_{j=0}^{n-1}  \psi_{j,n} (w(j))   \in  \psi_{0,n}(X) \pr \psi_{1,n}(X)\pr \cdots \pr \psi_{n-1, n}(X) $
contains the forbidden 
block $a_k a_{k+1} \cdots a_{k+\ell}$ in positions
$k$ through $k+\ell$, showing that $\bw \not\in X$,  the desired contradiction.  
\end{proof}

\begin{rem}\label{rem:64a} 
An important finiteness feature of the
 proof of Theorem \ref{thm:infinite_factor} is that it shows that  that existence of a $(k, \ell)$-missing-configuration certifies that $X$ has no 
$n$-fold interleaving factorization with $n \ge k+ \ell +1$ when $X$ is closed. 
\end{rem}

The following example shows the hypothesis of $X$ being closed set is necessary in the  statement of Theorem \ref{thm:infinite_factor}.
\begin{exmp}\label{exmp:odometer} 
{\rm (Non-closed infinitely factorizable  sets)} 
Let $X$ be the countable subset of $\sA^{\NN}$ consisting of all infinite sequences having a finite number of $1$'s. 
Then $X$ is infinitely factorizable, and all decimations $\psi_{j, n}(X)= X$  are copies of itself. It is not a closed set; its closure in $\sA^{\NN}$  is the full one-sided shift. 
It satisfies properties (i) and (ii) of Theorem \ref{thm:infinite_factor}  but  fails to satisfy  property (iii). 
(The set $X$ can be viewed as the  set of teminating binary expansions of all nonnegative  dyadic rationals $\frac{k}{2^m}$.)

The construction of   Theorem \ref{thm:sublattices-are-attained2} produces infinitely factorizable $X$ having  $\sN(X)  \subsetneq \NN^{+}$.
Such sets satisfy property (i), and do not satisfy properties (ii), (iii) of Theorem \ref{thm:infinite_factor}.
\end{exmp}

%
\subsection{Consequences of infinite factorizability}\label{subsec:42}


\begin{cor}\label{cor:42}
Let $X$ be an infinitely factorizable closed subset of $\mathcal{A}^{\NN}$. Then its  
factor set $\fF(X)$ consists of  all decimations
$\psi_{j, n}(X)$ for  $n \ge 1$ and $0 \le j \le n-1$. 
Each decimated set $\psi_{j, n}(X)$ is also infinitely factorizable. 
\end{cor}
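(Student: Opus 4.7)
The plan is to reduce both assertions to Theorem \ref{thm:infinite_factor}. Since $X$ is an infinitely factorizable closed set, the implication (i) $\Rightarrow$ (ii) of that theorem yields $\IC(X) = \NN^{+}$, and the implication (i) $\Rightarrow$ (iii) furnishes nonempty sets $\msrA_k \subseteq \msrA$ with the product representation $X = \prod_{k=0}^{\infty} \msrA_k$.

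For the first assertion, I would just unfold the definition of the factor set: $\fF(X) = \{\psi_{i,n}(X) : n \in \IC(X),\ 0 \le i \le n-1\}$. Combined with $\IC(X) = \NN^{+}$, this collapses immediately to $\{\psi_{j,n}(X) : n \ge 1,\ 0 \le j \le n-1\}$, the set of all principal decimations of $X$.

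For the second assertion, I would argue directly from the product representation. Applying $\psi_{j,n}$ coordinatewise to $X = \prod_{k} \msrA_k$ gives
\[
\psi_{j,n}(X) \;=\; \prod_{k=0}^{\infty} \msrA_{j+kn}
\]
for every $j \ge 0$ and $n \ge 1$, which is again a countable product of nonempty subsets of the finite alphabet $\msrA$. This matches the form in Theorem \ref{thm:infinite_factor}(iii), so the implication (iii) $\Rightarrow$ (i) of that theorem shows $\psi_{j,n}(X)$ is both closed and infinitely factorizable. Alternatively, in the principal range $0 \le j \le n-1$ one may invoke the already-established Corollary \ref{cor:infinitely-factorizable}, which says every interleaving factor of an infinitely factorizable set is itself infinitely factorizable. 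The whole argument is bookkeeping once Theorem \ref{thm:infinite_factor} is in hand, so I expect no real obstacle; the only small check is that $\psi_{j,n}$ genuinely commutes with the displayed product representation, which is immediate from the coordinatewise definition $\psi_{j,n}(\bx) = a_j a_{j+n} a_{j+2n} \cdots$.
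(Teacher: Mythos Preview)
Your proof is correct and follows essentially the same approach as the paper: both invoke Theorem~\ref{thm:infinite_factor} to get $\IC(X)=\NN^{+}$ for the first assertion, and both observe that property~(iii) (the product form) is preserved under decimation for the second. You have merely made the latter step explicit by writing out $\psi_{j,n}(X)=\prod_k \msrA_{j+kn}$, which the paper leaves as a one-line remark.
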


\begin{proof} 
By property (ii) of Theorem \ref{thm:infinite_factor}  $X$ is factorizable for each $n \ge 1$,
and its  $n$-fold factors are $\psi_{j, n}(X)$ for $0 \le j \le n-1$.
Now the property (iii) is preserved under decimations of all orders, hence all $\psi_{j, n}(X)$
must be infinitely factorizable.
\end{proof}

\begin{exmp}\label{exmp:infinite-factor-set} 
(Infinitely factorizable closed subsets $X$ of $\mathcal{A}^{\NN}$
having all  decimations $\psi_{j,n}(X)$  distinct) 
For  $\sA = \{0,1\}$  define $\sA_k \subset \sA$ for $ 0 \le k <\infty$ as follows. Let 
$\sA_k=\{0\}$ for all  indices $k \in A$ with
$$
A := \{  k \ge 0:  \, 0 \le  \{ k \sqrt{2} \} < \frac{1}{2} \} \quad \mbox{where} \quad \left\{  x \right\} = x - \lfloor x \rfloor. 
$$
(No special properties other than irrationality of $\sqrt{2}$  are used).
This set of indices is aperiodic (and has natural density $\frac{1}{2}$, using Weyl's equidistribution theorem.)
 Set $\sA_k = \{ 0, 1\}$ for all  other 
  integers $k \not\in A$, which is also an aperiodic  set (of natural density $\frac{1}{2}$).  
   
   Set $X= \prod_{k=0}^{\infty} \sA_k$. By Theorem \ref{thm:infinite_factor} property (iii) it is a closed set and is infinitely factorizable, i.e.  $\IC(X)= \NN_{+}$.
   Each decimation  $\psi_{j,n}(X)$ is also an infinite product space
 of the same kind
  whose set of indices $k$ that have reduced alphabet $\{0\}$  is exactly
 $$ A(j,n)  := \{ k \ge 0: 0 \le  \left\{ (nk+j) \sqrt{2} \right\} < \frac{1}{2}\}.$$
 Each $\psi_{j,n}(X)$ is closed and infinitely factorizable.
 Consider now two distinct decimations $\psi_{j, n}(X)$ and   $\psi_{\ell,m} (X)$,
 where we may suppose $1 \le n \le m$ and $0 \le j, \ell < \infty$, with $j \ne \ell$ if $n =m$.     To show distinctness we must show $A(j,n)  \ne A(\ell, m)$. 
We  use the well known fact that for each $n \ge 1$  the sequence of fractional parts $x_k= \left\{ k (n \sqrt{2})\right\}$  ($k \ge 1$) is dense  modulo $1$. 
(In fact, since $n \sqrt{2}$ is irrational, Weyl's theorem implies that
the  sequence $x_k$ is uniformly distributed modulo $1$.)  The argument has two cases. 

{\em Case 1. $n =m$.} We write $x_k:=\{(kn+j)\sqrt2\}$, and $y_k:=\{(kn+\ell)\sqrt2\}$, where $j\neq\ell$. Now $x_k=\{a_k+\theta\}$ for all k, where $\theta=\{(\ell-j)\sqrt2\}$.
Because $\sqrt2$ is irrational, $\theta\in(0,1)$; hence there must be an open interval $(a,b)\subset[0,\frac12)$ such that $(a+\theta,b+\theta)\subset(\frac12,1]$. Since $x_k$ takes values dense in $(0,1)$, we will have infinitely many $k$ with $x_k\in(a,b)$, and thus with $y_k\in(a+\theta, b+\theta)$. Therefore, there are infinitely many $k$ with $x_k\in[0,\frac12)$ and $y_k\in(\frac12,1]$.  For these $k$, $\sA_{kn+j}=\{0\}$, while $\sA_{kn+\ell}=\{0,1\}$, so all sequences in $\psi_{j,n}(X)$ must have $k$th symbol $0$,  while $\psi_{n, \ell}(X)$ has sequences with $k$th symbol taking both values $0$ or $1$
 (i.e., $k \in A(j, n)$ but $k \not\in A(\ell, n)$ for these $k$). 
 Thus $\psi_{j, n}(X) \ne \psi_{\ell, n}(X).$

{\em Case 2. $n<m$.} We write $x_k:=\{(nk+j)\sqrt2\}$ and $y_k:=\{(mk+\ell)\sqrt2\}$.  A calculation shows that 
$y_k=\{\frac{m}{n}x_k+\theta\}$, where $\theta=\{(\ell-\frac{jm}{n})\sqrt2\}$. Again, $\theta\in(0,1)$. There is an open interval $(c,d)\subset(\frac{1}{2},1]$ such that $(c-\theta,d-\theta)\subset(0,\frac{1}{2})$. Letting $(a,b)=\frac{n}{m}(c-\theta,d-\theta)$, we see that if $x_k\in(a,b)$, then $y_k\in(c,d)$. Again, by 
positive density of $x_k$, this happens infinitely often, and so there are infinitely many $k$ with $x_k\in[0,\frac{1}{2})$ and $y_k\in(\frac{1}{2},1]$. We conclude as in Case 1 that $\psi_{j,n}(X)\neq\psi_{\ell,m}(X)$.
 
We conclude that the  interleaving factor set $\fF(X)$ consists of all principal decimations, and they are all distinct. Therefore $\fF(X)$ is  infinite.
\end{exmp} 


\begin{exmp} \label{exmp:66}
(A closed $X$ with  an infinite factor set $\fF(X)$ ) 
The set $X$ constructed in Example   \ref{exmp:infinite-factor-set}  
has infinitely many distinct decimations so its decimation set $\ffD(X)$ 
and its principal decimation set $\ffD_{prin}(X)$ are infinite. In  addition
 all principal decimations are interleaving factors, so that its factor set $\fF(X)$ is also infinite.
\end{exmp} 




\begin{cor}\label{cor:63}
The set  $\sY(\mathcal{A})$
of all infinitely factorizable closed subsets   $X \subseteq \mathcal{A}^{\NN}$ is closed under  
 $n$-fold  interleaving operations of all $n \ge 1$. That is, if $X_0, X_1, \cdots , X_{n-1} \in \sY(\mathcal{A})$, then
$$
(\pr_n)_{i=0}^{n-1} X_i = X_0 \pr X_1 \pr \cdots \pr X_{n-1} \in \sY(\mathcal{A}).
$$
\end{cor}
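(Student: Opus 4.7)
The plan is to invoke the characterization of infinitely factorizable closed sets given by Theorem \ref{thm:infinite_factor}, specifically the product-structure condition (iii). Since each $X_i$ is infinitely factorizable and closed, for each $0 \le i \le n-1$ there exist nonempty subsets $\sA_{k,i} \subseteq \sA$ such that
\[
X_i = \prod_{k=0}^{\infty} \sA_{k,i}.
\]
I would then observe that closedness of $Y := (\pr_n)_{i=0}^{n-1} X_i$ follows immediately from Theorem \ref{thm:closure 2}(2), so only the infinite factorizability of $Y$ requires work.

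The key step is to exhibit a product-set representation of $Y$. For each $m \ge 0$, write $m = i + kn$ uniquely with $0 \le i \le n-1$ and $k \ge 0$, and define $\sB_m := \sA_{k,i}$. By the definition of the interleaving operation, $\by \in Y$ if and only if $\psi_{i,n}(\by) \in X_i$ for every $0 \le i \le n-1$, which is to say that the $k$th symbol of $\psi_{i,n}(\by)$ lies in $\sA_{k,i}$ for all $k \ge 0$ and all $0 \le i \le n-1$. Since the $k$th symbol of $\psi_{i,n}(\by)$ is precisely the $(i+kn)$th symbol of $\by$, this is the same as saying the $m$th symbol of $\by$ lies in $\sB_m$ for every $m \ge 0$. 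Hence
\[
Y = \prod_{m=0}^{\infty} \sB_m,
\]
which is a countable product of nonempty finite subsets of $\sA$.

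Applying Theorem \ref{thm:infinite_factor} in the reverse direction (condition (iii) implies (i)) to this representation shows $Y$ is infinitely factorizable, completing the proof that $Y \in \sY(\sA)$. There is no real obstacle here; the only thing to be careful about is making sure the index bijection $m \leftrightarrow (i,k)$ is handled cleanly so that the defining condition for the interleaving translates exactly into the defining condition of the product set $\prod_m \sB_m$.
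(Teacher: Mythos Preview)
Your proposal is correct and follows essentially the same approach as the paper: both use the characterization of infinitely factorizable closed sets via property (iii) of Theorem \ref{thm:infinite_factor} and observe that this product structure is inherited under $n$-fold interleaving. The paper's proof is a one-line remark that property (iii) is inherited under interleaving, whereas you spell out the index bijection $m \leftrightarrow (i,k)$ and the resulting product representation explicitly; your separate appeal to Theorem \ref{thm:closure 2}(2) for closedness is harmless but in fact redundant, since the product form $\prod_m \sB_m$ already gives closedness.
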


\begin{proof} 
The corollary  follows using the characterization of membership in $\sY(\mathcal{A})$ by property (iii) of  Theorem \ref{thm:infinite_factor}.
Property (iii)  is inherited under $n$-fold interleaving of sets $X_i$ that have it.
 \end{proof}
 
 %
%
\section{Iterated interleaving factorizations of general closed  subsets of  $\msrA^{\NN}$} \label{sec:5}

We consider iterated interleaving factorizations
for general sets $X\subseteq\sA^{\NN}$.
If a  set $X$ factors as 
$X=X_0\pr\cdots\pr X_{n-1}$, it is possible that one or more of the factors $X_{j}$ can itself be factored. However, unlike with factorizations of positive integers, for example, the further factors that appear at lower levels may not be interleaving factors of the original set $X$. We therefore name them  {\em iterated interleaving factors}. We  define an {\em iterated interleaving factorization} as follows:  
The iterated interleaving factorization of depth $0$ of a set $X$ is the equation $X=X$ (or the right hand side of such an equation).
An iterated interleaving factorization of depth $1$ is a single $n$-fold factorization $X = (Y_0 \pr Y_1 \pr \cdots \pr Y_{n-1})$ (with parentheses). 
The $Y_i$ are iterated interleaving factors of depth $1$.
An {\em iterated interleaving factorization of depth $k$} is obtained recursively from an iterated interleaving factorization of depth $k-1$, with one or more finitely factorizable sets $Y$ on the right hand side of depth $k$ being replaced by  interleaving factorizations $Y=(Y_0\pr Y_1\pr\cdots\pr Y_{n-1})$ (with parentheses), for $n \ge 2$ (allowing different $n$ for different $Y$). The  new added internal factors on the right are assigned  depth $k+1$; they are inside a nested set of $k+1$ parentheses.
 
 %
\subsection{Iterated interleaving factorization trees} \label{subsec:51}

 An iterated interleaving factorization can be visually represented by a rooted tree, as pictured in Figure \ref{fig71}.
 It has root node $X$, leaf nodes corresponding to the factors in the iterated interleaving factorization and internal
 nodes corresponding to intermediate factors.

\begin{figure}[ht]\label{fig71}
	\centering
	\Tree[.$X$ 
$X_{0,4}$
[.$X_{1,4}$
[.$Y_{0,2}$
$Z_{0,3}$
$Z_{1,3}$
$Z_{2,3}$
]
$Y_{1,2}$
]
$X_{2,4}$
$X_{3,4}$
]
\newline

\vskip 0.2in 
 \hskip 0.2in {\rm FIGURE 7.1.} Iterated interleaving tree for
$X=(X_{0,4}\pr((Z_{0,3} \pr Z_{1,3} \pr Z_{2,3})\pr Y_{1,2})\pr X_{2,4}\pr X_{3,4})$, an iterated interleaving factorization of depth 3.
\newline
\end{figure}

In our definition of iterated interleaving factorizations, each step is a finite factorization. 
If an iterated interleaving factor $Y$ at level $k$ has $n$-fold interleaving factorizations for multiple values of $n$, it is natural to choose the $n$-fold factorization with the largest $n$ because 
this factorization refines all the other possible factorizations of $Y$, 
by the divisibility properties of  $\IC(X)$ from Theorem \ref{thm:lcm-factorization}.

How should one treat infinitely factorizable factors? 
We will adopt the convention in this factorization
process that we  ``freeze" any infinitely factorizable factors encountered, and do not further factorize them. 
We do this for two reasons. 
First, for infinitely factorizable $Y$,  no  natural choice of $n$ exists
for a $n$-factorization at the next level. Secondly, all  interleaving factors of infinitely factorizable sets 
 are also infinitely factorizable by Corollary \ref{cor:infinitely-factorizable}, 
 so the factorization process would necessarily proceed forever if we did not freeze any infinitely factorizable factors.

This raises the question:  If one factorizes only finitely factorizable sets,
will  the   iterated interleaving factorization process always terminate at a finite depth? 
We show below that the answer is: there are closed $X$  where the iteration process can
go on forever.  
 %
%
\subsection{Arbitrary depth factorizations } \label{subsec:52}

We show, by  construction,   that there exist closed sets $X$ 
having iterated interleaving factorizations of all depths $k \ge 1$,
 with all factors at all depths being finitely factorizable. 
 (Thus so the ``freezing" property is never needed).

\begin{thm} \label{thm:71}
{\rm (Infinite depth interleaving  factorizations)} 
There exist uncountably many closed sets $Z_I\subseteq \sA^{\NN}$ with $\sA= \{0,1\}$,
indexed by $I \in \sA^{\NN}$,  that  possess iterated interleaving factorizations of every depth $k \ge 1$.
They each have a unique  iterated
interleaving factorization of depth $k$, for all $k \ge 1$. 
Each $Z_{I}$ has an interleaving factor set $\fF(Z_{I})$ containing at most three  elements.
There exist such $I$ for which the principal decimation set $\ffD_{\text{\em prin}}(Z_I)$ is infinite. 
\end{thm}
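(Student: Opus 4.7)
My plan is to construct $Z_I$ by an explicit recursion on the shift of $I$. Fix two distinct closed sets $A_0, A_1 \subseteq \{0,1\}^{\NN}$ each satisfying $\sN(A_j) = \{1\}$; a convenient choice is $A_0 = \{0^\infty, 110^\infty\}$ and $A_1 = \{0^\infty, 1110^\infty\}$. I would first verify $\sN(A_j) = \{1\}$ by noting that each $A_j$ forces its first two symbols to be equal, producing a $(0,1)$-missing-configuration (the block $01$), which by Remark \ref{rem:64a} precludes any $n$-fold factorization with $n \ge 2$. For each $I = i_0 i_1 i_2 \cdots \in \{0,1\}^{\NN}$, define $Z_I$ as the unique closed set satisfying the recursive equation $Z_I = A_{i_0} \pr Z_{\sigma I}$, where $\sigma$ is the left shift. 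Explicitly, $Z_I$ consists of those $\bz \in \{0,1\}^{\NN}$ such that for every $k \ge 0$ the subsequence $(z_n)_{n \in E_k}$ lies in $A_{i_k}$, where $E_k = \{2^k(2j+1) - 1 : j \ge 0\}$ is the ``level-$k$ chain'' of positions with 2-adic valuation $v_2(n+1) = k$. Since the $E_k$ partition $\NN$ and each single-chain constraint defines a closed set, $Z_I$ is closed.

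The heart of the proof is the identity $\sN(Z_I) = \{1, 2\}$. The inclusion $2 \in \sN(Z_I)$ is the recursion itself. Conversely, because $A_{i_0}$ forces $a_0 = a_1$, every $\bz \in Z_I$ satisfies $z_0 = z_2$, while $z_1 \in \{0,1\}$ is unconstrained (it lives in the $A_{i_1}$-chain at sub-position $0$). Thus the block $(0, 0, 1)$ at positions $0, 1, 2$ is a $(0, 2)$-missing-configuration, and Remark \ref{rem:64a} rules out any $n$-fold factorization of $Z_I$ with $n \ge 3$. Since each $A_{i_j}$ is frozen ($\sN(A_{i_j}) = \{1\}$), the only non-trivial factorization available in the iterated factorization tree at each stage is the unique 2-fold factorization (Theorem \ref{thm:DIF}) of the single non-frozen node $Z_{\sigma^k I}$; this gives by induction the unique depth-$k$ iterated factorization $Z_I = A_{i_0} \pr (A_{i_1} \pr (\cdots \pr (A_{i_{k-1}} \pr Z_{\sigma^k I}) \cdots))$ for every $k \ge 1$.

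For the remaining assertions, the interleaving factor set $\fF(Z_I) = \{Z_I, A_{i_0}, Z_{\sigma I}\}$ contains at most three elements. To show $I \mapsto Z_I$ is injective, suppose $I$ and $I'$ first differ at index $k$, say $i_k = 0$ and $i'_k = 1$. The sequence $\bz$ whose $E_k$-chain equals $110^\infty \in A_0 \setminus A_1$ and whose other chains are $0^\infty \in A_{i_\ell} \cap A_{i'_\ell}$ lies in $Z_I \setminus Z_{I'}$, so the two sets are distinct; this yields uncountably many $Z_I$. For the infinite principal decimation set, pick $I$ aperiodic so the $\sigma^k I$ are pairwise distinct. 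Iterating $\psi_{1, 2}(Z_J) = Z_{\sigma J}$ via the composition rule $\psi_{j, m} \circ \psi_{k, n} = \psi_{k + jn, mn}$ gives $\psi_{2^k - 1, 2^k}(Z_I) = Z_{\sigma^k I}$ for every $k \ge 1$; by injectivity of $I \mapsto Z_I$ these principal decimations are pairwise distinct, so $\ffD_{\text{prin}}(Z_I)$ is infinite.

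The main obstacle I anticipate is verifying rigorously that the fixed-point recursion $Z_I = A_{i_0} \pr Z_{\sigma I}$ coincides with the explicit level-chain description, i.e., that $\bz \in Z_I$ if and only if $(z_n)_{n \in E_k} \in A_{i_k}$ for all $k \ge 0$. This requires identifying the positions reached by iterating $\psi_{1, 2}$ as the arithmetic progressions $E_k$, so that each depth-$k$ finite truncation of the nested factorization recovers the correct constraints on positions. Once this bookkeeping is in place the missing-configuration argument gives $\sN(Z_I) = \{1, 2\}$ in one step and the rest of the theorem follows cleanly.
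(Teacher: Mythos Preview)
Your proposal is correct and follows essentially the same construction as the paper: define $Z_I$ by the recursion $Z_I = X_{i_0}\pr Z_{\sigma I}$, realized concretely by constraining each level-$k$ arithmetic progression $E_k = \AP(2^k-1;2^{k+1})$ to lie in a fixed building block with trivial interleaving closure set, then read off closedness, injectivity, the unique depth-$k$ factorization, and the infinite principal decimation set from this structure.

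The two places where you diverge from the paper are worth a sentence each. First, you take $A_0,A_1$ to be two-element sets rather than the Fibonacci and anti-Fibonacci shifts; this is harmless and makes the injectivity argument slightly more explicit. Second, and more interestingly, your proof that $\sN(Z_I)=\{1,2\}$ goes through a direct $(0,2)$-missing-configuration (using that your particular $A_j$ force $z_0=z_2$), whereas the paper argues abstractly via the shuffle identity: any $2m$-fold factorization of $Z_I$ with $m\ge 2$ would, by Proposition~\ref{thm:shuffle} and uniqueness of the $2$-fold factorization, induce an $m$-fold factorization of $X_{i_0}$, contradicting $\sN(X_{i_0})=\{1\}$. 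The paper's argument works for \emph{any} choice of building blocks with $\sN=\{1\}$, while yours exploits the specific combinatorics of your $A_j$; both are fine here. Your decimations $\psi_{2^k-1,2^k}(Z_I)=Z_{\sigma^k I}$ are the correct ones for the infinite-decimation claim (the paper's stated $\psi_{2^r-1,2^{r+1}}$ in Claim~7 appears to be a typo for $\psi_{2^r-1,2^r}$).
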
 

\begin{proof}

Let $X_0$ and $X_1$ be two distinct closed sets in $\sA^{\NN}$ 
having trivial interleaving set $\IC(X_0) =\IC(X_1) = \{1\}.$ For definiteness consider $X_0=X_F$ the {\em Fibonacci shift} , consisting
of all words which do not have two consecutive $1$'s,  and $X_2=X_{\text{AF}}$ the {\em anti-Fibonacci shift}, 
which consists  of all one-sided infinite words 
which do not contain two consecutive $0$'s. Example \ref{basicexmp} showed $X_F$ has no $n$-fold interleaving
factorizations for $n \ge 2$, and the proof applies to $X_{\text{AF}}$. 
Given  an index set $I=i_0i_1i_2 \cdots \in \sA^{\NN}$, we define a set 
\begin{equation}
Z_I = \{ \bz \in \sA^{\NN}: \, \psi_{2^r -1, 2^{r+1}} (\bz) \in X_{i_r} \quad \mbox{for} \quad r \ge 0 \} .
\end{equation}
Let $\bz=z_0z_1z_2\cdots$.
The decimations determine the values of $z_i$ for subscripts  in arithmetic progressions.
We represent an arithmetic progression as  $\AP(a; d) = \{ n \ge 0: \, n \equiv a\, (\bmod \, d)\}$. 
 Then the values $\bz_i$ for $i \in \AP(2^r-1; 2^{r+1})$ are restricted by
$\psi_{2^r-1, 2^{r+1}}(\bz)\in X_{i_r}$. 
We first show   that  $Z_{I}$ is well-defined.  


{\bf Claim 1.}  {\em The set of arithmetic progressions $\AP(2^r -1; 2^{r+1}) $ for $r \ge 0$ form a partition of $\NN$.}\medskip

We show by induction on $r \ge 0$ that $N_m:= \sqcup_{r=0}^m \AP(2^{r}-1; 2^{r+1}) = \NN \smallsetminus \AP(2^{r+1} -1; 2^{r+1}),$
a disjoint union. 
The base case $r=0$ asserts $AP[0; 2) = \NN \smallsetminus AP(1; 2).$ The induction step uses
$AP(2^{m+1}-1,;2^{m+1}) = AP(2^{m+1}-1, 2^{m+2}) \sqcup \AP(2^{m+2}-1; 2^{m+2}).$ Finally,  the
set $N_m$ contains the interval $[0, 2^m-2]$, so the infinite set union covers $\NN$, proving  claim 1.  \medskip

{\bf Claim 2.} {\em If $I \ne J$ then $Z_{I} \ne Z_J$.} \medskip

If $I \ne J$ then some $i_r \ne j_r$. Then $\psi_{2^{r} -1, 2^{r+1}}(Z_{I}) = X_{i_r}$ and
$\psi_{2^{r} -1, 2^{r+1}} (Z_{J}) = X_{j_r}$ which are distinct since $X_1 \ne X_2$.
thus  $Z_{I} \ne Z_J$, proving claim 2.
\medskip

{\bf Claim 3.} {\em Each $Z_{I}$ is a closed set in $\sA^{\NN}$.} \medskip

It suffices to show each convergent subsequence
of elements of $Z_{I}$ has  a limit in $Z_{I}$. Convergence is $\sA^{\NN}$ is pointwise on each index separately. Suppose $ \bx_k \to \by$ in
 $\sA^{\NN}$ $(k \in \NN)$  as $k \to \infty$
with each $\bx_k \in Z_{I}$. We then have $\psi_{2^{r}-1, 2^{r+1}}(\bx_k) \to \psi_{2^{r}-1, 2^{r+1}}(\by)$ in $\sA^{\NN}$. 
 For each $r \ge 0$ we have
$ \psi_{2^{r}-1, 2^{r+1}}(\bx_k) \in X_{i_r}, $ hence  $\psi_{2^{r}-1, 2^{r+1}}(\bx_k) \to \psi_{2^{r}-1, 2^{r+1}}(\by) \in X_{i_r}$,
since $X_{i_r}$ is  a closed set. The property $\psi_{2^{r}-1, 2^{r+1}}(\by) \in X_{i_r}$ for all $r \ge 0$
certifies that $\by \in Z_{I}$, proving claim 3. \medskip

{\bf Claim 4.} {\em Each $Z_{I}$ has a  $2$-fold interleaving factorization
$$Z_I =  X_{i_0} \pr Z_{SI},$$
 where $SI = i_1i_2i_3 \cdots$ denotes  the one-sided shift of $I \in \sA^{\NN}$.} \medskip

Using Proposition \ref{prop:decimation_shuffle}  we find
$$
\psi_{2^{r}-1, 2^{r+1}}(\bz) = \psi_{0,2} \circ \underbrace{\psi_{1,2}\circ\cdots\circ\psi_{1,2}}_{\text{$r$ times}}(\bz),
$$
and one proves it  by induction on $r\ge0$. 
Letting $\bw = \psi_{1,2}(\bz)$, we have for $r \ge 1$
\begin{equation}\label{eqn:repeat}
\psi_{2^{r}-1, 2^{r+1}}(\bz) =  \psi_{0,2} \circ \underbrace{\psi_{1,2}\circ\cdots\circ\psi_{1,2}}_{\text{$r-1$ times}}(\bw)
= \psi_{2^{r-1}-1, 2^r}(\bw). 
\end{equation}
By definition 
$$
Z_{SI} =\{ \bw \in\sA^{\NN}:\, \psi_{2^r-1, 2^{r+1}}(\bw) \in X_{i_{r+1}} \,\, \mbox{for} \,\, r \ge 0\}. 
$$
Now we have, using \eqref{eqn:repeat}, 
\begin{eqnarray*}
Z_I &=& \{ \bz \in \sA^{\NN} ; \, \psi_{0,2}(\bz) \in X_{i_0} \,\, \mbox{and} \,\, \bw= \psi_{1,2}(\bz) \, \mbox{has} \,\, \psi_{2^r-1, 2^{r+1}}(\bw) \in X_{i_{r+1}}\,\, \mbox{for} \,\, r \ge 1\}\\
&=& \{ \bz \in \sA^{\NN}:\, \psi_{0,2}(\bz) \in X_{i_0} \,\, \mbox{and} \,\, \psi_{1,2}(\bz) \in Z_{SI} \} = X_{i_0} \pr Z_{SI}, 
\end{eqnarray*} 
proving  Claim 4.\medskip

At this point we  obtain an iterated interleaving factorization for $Z_I$ to arbitrary depth $k \ge 1$, by iterating the factorization given in Claim 4.
This  can be done since one factor is again of the form $Z_I$ (with a different $I$).  
Given $I$, using the notation  $Z_{0} := Z_{I}$ and $Z_{k} := Z_{S^k I}$ we have the depth $k$ factorization
$$
Z_k =  X_{i_0} \pr \left(X_{i_1} \pr \left(\cdots \left(X_{i_{k-2}} \pr \left(X_{i_{k-1}} \pr Z_{k} \right) \right) \cdots \right) \right).
$$
Figure \ref{fig72} shows a tree corresponding to such an iterated factorization after the fourth level of factoring.

 \begin{figure}[ht]\label{fig72}
	\centering
	\Tree[.$Z_0$ 
$X_{i_0}$
[.$Z_1$
$X_{i_1}$
[.$Z_2$
$X_{i_2}$
[.$Z_3$
$X_{i_3}$
[.$Z_4$
]
]
]
]
]
\newline

\vskip 0.2in 
 \hskip 0.2in {\rm FIGURE 7.2.} Iterated interleaving tree for
$Z_0=(X_{i,0} \pr(X_{i_1} \pr (X_{i_2} \pr (X_{i_3} \pr Z_4))))$.
\newline
\end{figure}

The remaining part of the proof will show this factorization tree is unique at every level $k$.
Finally a suitable choice of $I$ will lead to $Z_I$ having  infinitely many different principal decimations.\medskip

{\bf Claim 5.} {\em The interleaving closure set $\IC(Z_{I}) = \{ 1, 2\}$ with associated factor set $\fF(Z_{I}) = \{Z_I, X_{i_0}, Z_{SI}\}.$ }\medskip 

It suffices to show that $Z_{I}$ has no $n$-fold interleavings with $n \ge 3$, in view of Claim 4.  We argue
by contradiction.  Given an $n$-fold interleaving for $n \ge 3$, by 
Theorem  \ref{thm:lcm-factorization}(2),  it  would also have an  $\lcm(2, n)$-fold interleaving, and we set
$2m := \lcm(2,n)$ with $m \ge 2$. A shuffle identity from Proposition \ref{thm:shuffle} gives
$$
Z_I = (\pr_{2m})_{j=0}^{2m-1} X_{i, 2m} = \bigg((\pr_{m})_{i=0}^{m-1} X_{2i, 2m} \bigg) \hspace{2pt} \pr \hspace{2pt} \bigg((\pr_{m})_{i =0}^{m-1} X_{2i+1, 2m} \bigg). 
$$
Since $2$-fold interleaving factorizations are unique, and $Z_{I}= X_{i_0} \pr Z_{SI}$, we must have
$$
X_{i_0} = (\pr_{m})_{i=0}^{m-1} X_{2i, 2m}. 
$$
This contradicts the fact that $X_0$ and $X_1$ have no nontrivial interleaving factorizations, proving claim 5.\medskip

{\bf Claim 6.} {\em For $k \ge 1$, each $Z_{I}$ has a unique iterated interleaving factorization of depth $k$,
whose iterated interleaving factors are $X_{I_r}$ for $0 \le r \le k-1$ and $Z_{S^k}(I).$}  \medskip

This claim follows by induction on $k \ge 1$, the base case being the factorization in Claim 4. For the induction
step from $k$ to $k+1$, all but one of the leaves of the tree (iterated interleaving factors) are of form  $X_i$, which have no
non-trivial interleaving factors, and the remaining factor $Z_{J}$, with $J= S^k I$, which has only a  $2$-fold interleaving
factorization $Z_{S^k I} = X_{i_k} \pr Z_{S^{k+1}(I)}$. By updating the list of iterated interleaving factors we
complete the induction step.
This proves claim 6. \medskip

{\bf Claim 7.} {\em If $I$ is strongly aperiodic, meaning that all its shifts $S^k I $ for $k \ge 0$ are distinct,
then all the  decimations of $Z_I$ of form $\psi_{2^{r} -1, 2^{r+1}}(Z_{I})$ for $r \ge 0$  are distinct. In particular, the principal decimation set
$\ffD_{\text{\em prin}}(Z_{I})$ of $Z_{I}$ is an infinite set.}  \medskip

We have $\psi_{2^{r} -1, 2^{r+1}}(Z_{I})= Z_{S^r I}$. By Claim 2 distinct $S^I$ give distinct $Z_{S^{r}I}$. 
The strongly aperiodic assumption then makes all  $\psi_{2^{r} -1, 2^{r+1}}(Z_{I})$ distinct. They
are principal decimations, so $\ffD_{\text{\em prin}}(Z_{I})$ is infinite. This proves Claim 7. 
\end{proof} 

\begin{exmp} \label{exmp:72}
(A closed set with an infinite principal decimation set but a  finite factor set) 
Theorem \ref{thm:71}  exhibited $Z_I$ that have   infinitely many  distinct principal decimations; $\ffD_{\text{\em prin}}(Z_I)\subseteq \ffD(Z_I)$.
However Claim 5 showed the factor set $\fF(Z_I)$ is always  finite.
\end{exmp}

\begin{rem} \label{rem:73}
The sets $Z_I$ in Example \ref{exmp:72} exhibit the failure of  two finiteness properties possessed by 
all path sets studied in \cite{ALS20}.
First, interleaving factorizations of path sets $\sP$ always halt at finite depth  (under the freezing convention), while $Z_{I}$ never
does. 
Second, path sets $\sP$ always have finitely many different decimations, i.e. $\ffD(\sP)$ is finite,
while this example does not.   Example \ref{exmp:66} gave another example having infinitely  many different decimations.
\end{rem}

%
\section{Shift-stable and weakly shift-stable sets }\label{sec:30}

Classical symbolic dynamics is concerned with  properties of sets $X \subseteq \sA^{\NN}$ invariant under the shift operator. 
The  class of  such sets is not preserved  under decimation or interleaving operations.
We  study two weaker notions of  sets $X$ compatible with the shift operation---{\em shift-stable sets} and {\em weakly shift-stable sets}---
with better properties.
Shift-stable sets naturally arise in one-sided dynamics that encode initial conditions, and we show they are closed under all
decimations,  but not closed under interleaving operations. 
The  wider class of weakly shift-stable sets is  closed under all decimation and interleaving operations.

%
%
\subsection{Shift-stable sets}\label{subsec:31}

Recall from Definition \ref{defn:weakshift} that a  general set $X \subseteq \sA^{\NN}$ is {\em shift-stable} if $SX \subseteq X$,
and it  is {\em shift-invariant} if $SX=X$. These definitions allows non-closed sets.
Shift-stability is a strictly weaker condition than shift-invariance; see Example \ref{exmp:descending_chain} below.

Shift-stable and shift-invariant sets satisfy the following closure properties under decimation and interleaving closure operations: 
\begin{thm}\label{thm:312}
 Let $\sA $ be finite alphabet and let $X \subseteq \sA^{\NN}$ be a general set (not necessarily closed).

(1) If $X$ is shift-stable (resp. shift-invariant), then all decimations $\psi_{j,n}(X)$ for $j \ge 0$, $n \ge 1$ 
are shift stable (resp. shift-invariant).  

(2) If $X$ is shift-stable (resp. shift-invariant) then all $n$-fold interleaving closures $X^{[n]}$ with $n \ge 1$ are shift-stable (resp. shift-invariant). 
\end{thm}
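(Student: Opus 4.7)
The plan is to reduce both parts to commutation identities between the shift and the decimation/interleaving operations that have already been established earlier in the paper, together with the elementary fact that all the operators in play are monotone with respect to set inclusion.

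For part (1), I would start from the identity $S\psi_{j,n}(X) = \psi_{j,n}(S^n X)$ proved in Proposition \ref{prop:decimation_shift}(2). The hypothesis $SX \subseteq X$ iterates to $S^n X \subseteq X$ for every $n \ge 1$ by a one-line induction using monotonicity of $S$ on subsets. Because $\psi_{j,n}$ acts on sets by pointwise union, it is itself monotone, and applying it to $S^n X \subseteq X$ yields $\psi_{j,n}(S^n X) \subseteq \psi_{j,n}(X)$. Chaining these gives $S\psi_{j,n}(X) \subseteq \psi_{j,n}(X)$, which is the desired shift-stability of the decimation. The shift-invariant case is identical with every inclusion upgraded to an equality, since $SX = X$ forces $S^n X = X$ for all $n \ge 1$.

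For part (2), I would appeal to Proposition \ref{prop:int-closure-shift}, which provides the key commutation $S(X^{[n]}) = (SX)^{[n]}$. If $X$ is shift-stable, then $SX \subseteq X$ and the isotone property of the interleaving closure operator (Theorem \ref{thm:inclusion}(4)) gives $(SX)^{[n]} \subseteq X^{[n]}$, so $S(X^{[n]}) \subseteq X^{[n]}$. If $X$ is shift-invariant, then $SX = X$, whence $(SX)^{[n]} = X^{[n]}$ and therefore $S(X^{[n]}) = X^{[n]}$.

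I do not anticipate a substantive obstacle: the only nontrivial ingredients are the already-proven identities from Propositions \ref{prop:decimation_shift} and \ref{prop:int-closure-shift} and the isotone property of Theorem \ref{thm:inclusion}(4); the remaining steps are iteration of an inclusion and routine monotonicity. If anything, the mildest bookkeeping point is verifying that the set-valued extensions $X \mapsto \psi_{j,n}(X)$ and $X \mapsto X^{[n]}$ are monotone in $X$, but both facts are immediate from the respective definitions (union over elements, and the projection characterization in Theorem \ref{thm:inclusion}(1)).
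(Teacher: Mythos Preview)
Your proposal is correct. Part~(1) is essentially identical to the paper's argument: both reduce to the identity $S\psi_{j,n}(X)=\psi_{j,n}(S^nX)$ from Proposition~\ref{prop:decimation_shift}(2) together with the iterated inclusion $S^nX\subseteq X$.

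For part~(2) you take a slightly different route from the paper. The paper expands $S(X^{[n]})$ explicitly via Proposition~\ref{prop:interleaving-shift} as $\psi_{1,n}(X)\pr\cdots\pr\psi_{n,n}(X)$, rewrites each factor using $\psi_{i,n}(SX)=\psi_{i+1,n}(X)$, and then applies $SX\subseteq X$ factor by factor. You instead invoke the packaged identity $S(X^{[n]})=(SX)^{[n]}$ from Proposition~\ref{prop:int-closure-shift} and finish with the isotone property of Theorem~\ref{thm:inclusion}(4). Your argument is shorter and more conceptual, since it delegates the computation to an earlier result; the paper's version is more self-contained but essentially re-derives the content of Proposition~\ref{prop:int-closure-shift} inline. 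Either way the logical content is the same, and both handle the shift-invariant case by upgrading inclusions to equalities.
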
 
\begin{proof}
(1) Shift-stability of $X$ implies $S^mX \subseteq S^{m-1}X$ whence $S^m X \subseteq X$ for all $m\ge 0$.
Now  Proposition  \ref{prop:decimation_shift} gives
$$
  S \psi_{j,n}(X) = \psi_{j,n} (S^n X) \subseteq \psi_{j,n}(X).
$$
If $X$ is shift invariant,  then $S^mX=X$ for all $m \ge 0$ and equality holds. 

(2) If $X$ is shift stable, then we  have, by Proposition \ref{prop:interleaving-shift}, Proposition \ref{prop:decimation_shift}, and (1):
\begin{eqnarray*}
S X^{[n]} & = & S \big(\psi_{0,n}(X) \pr \psi_{1,n}(X)  \pr \cdots \pr \psi_{n-1,n}(X) \big)\\
&= & \psi_{1, n}(X)  \pr \psi_{2, n}(X) \pr \cdots \pr  \psi_{ n,n} (X) \\
&= & \psi_{0, n}(S X)  \pr \psi_{1, n}(S X) \pr \cdots \pr \psi_{ n-1,n} (S X) \\
&\subseteq&  \psi_{0,n}(X) \pr \psi_{1,n}(X)  \pr \cdots \pr \psi_{n-1,n}(X)  = X^{[n]}.
\end{eqnarray*}
If $X$ is shift invariant, then all steps hold with equality, as required.
\end{proof} 

The shift-invariant property restricts the form of interleaving factorizations.

\begin{prop} \label{thm:shift-self-factorization} 
{\rm (Shift invariance implies  self-interleaving) }
If a general set $X \subseteq \sA^{\NN}$ is shift-invariant, then  all  of its   interleaving factorizations will be   
self-interleaving factorizations.
\end{prop}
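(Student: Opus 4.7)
The plan is very short: the result follows almost immediately from the shift-decimation commutation identity in Proposition \ref{prop:decimation_shift}(1).

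Suppose $X \subseteq \sA^{\NN}$ is shift-invariant, so $SX = X$, and fix $n \ge 1$ such that $X = X^{[n]}$ (this includes the case when $X$ has been presented with an $n$-fold interleaving factorization, since by Theorem \ref{thm:DIF} any such factorization has interleaving factors $X_i = \psi_{i,n}(X)$ for $0 \le i \le n-1$). The goal is to show that $X_0 = X_1 = \cdots = X_{n-1}$, i.e., that the ordered factorization is self-interleaving.

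First I would apply Proposition \ref{prop:decimation_shift}(1), which says $\psi_{i,n}(SX) = \psi_{i+1,n}(X)$ for all $i \ge 0$ and $n \ge 1$. Substituting $SX = X$ on the left gives $\psi_{i,n}(X) = \psi_{i+1,n}(X)$ for every $i \ge 0$. Specializing to $0 \le i \le n-2$ yields $X_i = X_{i+1}$, and chaining these equalities gives $X_0 = X_1 = \cdots = X_{n-1}$, as desired. Hence by Theorem \ref{thm:DIF}(2), the unique $n$-fold interleaving factorization of $X$ is self-interleaving with common factor $Z_n = \psi_{0,n}(X)$.

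There is no substantive obstacle; the entire content of the proposition is the one-step identity $\psi_{i+1,n}(X) = \psi_{i,n}(SX)$ combined with shift-invariance, so the writeup is essentially a single display. The only thing to be mildly careful about is invoking the uniqueness part of Theorem \ref{thm:DIF} at the end, to confirm that this equality of principal decimations is the same as saying every interleaving factorization of $X$ (not merely the canonical one built out of principal decimations) is self-interleaving.
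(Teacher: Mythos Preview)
Your proof is correct and follows essentially the same route as the paper: both use the identity $\psi_{i+1,n}(X) = \psi_{i,n}(SX)$ from Proposition~\ref{prop:decimation_shift}(1), substitute $SX = X$, chain the resulting equalities $\psi_{i,n}(X) = \psi_{i+1,n}(X)$, and invoke the uniqueness in Theorem~\ref{thm:DIF}(2) to conclude that any $n$-fold interleaving factorization has all factors equal to $\psi_{0,n}(X)$.
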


\begin{proof}
We have for each $n \ge1$, that for $j \ge 0$ 
$$
\psi_{j+1, n}(X) = \psi_{j, n}(SX) = \psi_{j, n}(X)
$$
with the leftmost  equality generally true by  Proposition \ref{prop:decimation_shuffle} (2) 
and the second equality from shift invariance. 
We now have
$$ 
\psi_{j, n}(X) = \psi_{0,n}(X) \quad \mbox{for} \quad  j \ge 0.
$$
But by Theorem \ref{thm:DIF} any  $n$-fold interleaving  $X = (\pr_n)_{i=0}^{n-1} X_{i,n}$ 
has $X_{i,n} = \psi_{i,n}(X)$, hence it is a self-interleaving with $Z_n= \psi_{0,n}(X)$. 
\end{proof}

%
%
\subsection{Closed shift-stable sets}\label{subsec:27a}

An important feature of 
closed shift-stable sets is that they are characterized by forbidden blocks, paralleling the definition of  
two-sided shift spaces in \cite[Sec. 1.2]{LM95}. 
Let $\sA^{\ast}$ denote  the set of all finite  words in the alphabet $\sA$, including the empty word.
A {\em block} in an infinite word $\bx= a_0a_1a_2 \cdots$ is a finite sequence of consecutive symbols
$a_k a_{k+1} \cdots a_{k+\ell}$.


\begin{prop}\label{prop:forbidden_block}
{\rm (Forbidden block characterization of shift-stability)}  
The following statements about a set $ X \subseteq \sA^{\NN}$ are equivalent.

(1) $X$ is closed and shift-stable, i.e. $X$ is closed and $S X \subseteq X$.

(2) $X$ is the set of all infinite words avoiding a (finite or infinite) set $\sB^{\perp}   \subseteq \sA^{\ast}$ of forbidden blocks. 
\end{prop}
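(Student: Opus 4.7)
The strategy is to prove the two implications separately, with the nontrivial direction being $(1)\Rightarrow(2)$, where both hypotheses on $X$ will be used in an essential way.

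For the easier direction $(2)\Rightarrow(1)$, suppose $X$ is defined as the set of infinite words in $\sA^{\NN}$ that avoid every block in some set $\sB^{\perp}\subseteq\sA^{\ast}$. Shift-stability is immediate: if $\bx$ avoids every block in $\sB^{\perp}$, then any block appearing in $S\bx$ already appears in $\bx$ (at an index shifted by one), so $S\bx\in X$. For closedness, I would observe that for each finite block $w=w_0w_1\cdots w_{\ell-1}\in\sA^{\ast}$ and each starting index $k\ge 0$, the cylinder set $C_{k,w}:=\{\bx\in\sA^{\NN}:x_{k+i}=w_i\text{ for }0\le i\le\ell-1\}$ is clopen in the symbol topology. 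Hence $U_w:=\bigcup_{k\ge 0}C_{k,w}$ is open and its complement $F_w:=\sA^{\NN}\setminus U_w$ (the words avoiding $w$) is closed. Then $X=\bigcap_{w\in\sB^{\perp}}F_w$ is an intersection of closed sets, hence closed.

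For the harder direction $(1)\Rightarrow(2)$, I would define
\[
\sB(X):=\{w\in\sA^{\ast}:w\text{ appears as a block in some }\bx\in X\},\qquad \sB^{\perp}:=\sA^{\ast}\setminus\sB(X),
\]
and let $Y$ denote the set of all infinite words avoiding every block in $\sB^{\perp}$. The inclusion $X\subseteq Y$ is automatic from the definitions. The substantive content is the reverse inclusion $Y\subseteq X$. Given $\by=b_0b_1b_2\cdots\in Y$, every prefix $b_0b_1\cdots b_{k-1}$ lies in $\sB(X)$, so there exists $\bx^{(k)}\in X$ and an index $j_k\ge 0$ with $x^{(k)}_{j_k+i}=b_i$ for $0\le i\le k-1$. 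Invoking shift-stability iteratively, $S^{j_k}\bx^{(k)}\in X$, and by construction this element has $b_0b_1\cdots b_{k-1}$ as its length-$k$ prefix. Setting $\tilde{\bx}^{(k)}:=S^{j_k}\bx^{(k)}$, the sequence $\{\tilde{\bx}^{(k)}\}_{k\ge 1}$ converges to $\by$ in the product (symbol) topology, because symbols in any fixed position eventually stabilize to the corresponding symbol of $\by$. Since $X$ is closed, $\by\in X$, finishing the proof.

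The main obstacle I expect is the step $Y\subseteq X$: it requires both hypotheses (shift-stability, to move each occurrence of a prefix to position $0$; and closedness, to pass from the approximating sequence $\tilde{\bx}^{(k)}$ to the limit $\by$). Neither hypothesis alone would suffice, so the crucial verification is that the two-step argument---slide via $S^{j_k}$, then take a limit---uses nothing more than what is assumed.
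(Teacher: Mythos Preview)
Your proposal is correct and follows essentially the same approach as the paper: for $(1)\Rightarrow(2)$ you take $\sB^{\perp}$ to be the complement of the language of $X$, and for the key inclusion $Y\subseteq X$ you use shift-stability to slide each occurrence of a prefix of $\by$ to position $0$ and then invoke closedness to pass to the limit. The only cosmetic difference is that your treatment of $(2)\Rightarrow(1)$ spells out the cylinder-set argument for closedness, while the paper states it in one line.
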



\begin{rem}\label{rem:85}
An analogous result holds in two-sided symbolic dynamics for subsets of $\mathcal{A}^\mathbb{Z}$, 
(\cite[Theorem 6.1.21]{LM95}), 
where shift-stability is replaced by  shift invariance,  proved with a similar argument. 
The difference  between  shift-stablity  and shift-invariance is discussed in Example \ref{exmp:219a}.
\end{rem} 

\begin{proof}
(2) $\Rightarrow$ (1). The set $X$ is closed, since any limit word in the sequence topology will not contain  any forbidden block.
Now $SX$ is a closed set of infinite words, which do not contain any of the forbidden blocks. It follows that $SX \subseteq X$.

(1) $\Rightarrow$ (2). The hypothesis $S X \subseteq X$ implies $S^k X \subseteq S^{k-1} X\subseteq X$ for all $k \ge 1$ by induction on $k$.
We let $\sB^{\perp}(X) \subseteq \sA^{\ast}$  denote all the finite words that do not appear anywhere in any  word in $X$.
Let $Y$ denote the set of all infinite words that avoid any block in $\sB^{\perp}(X)$. By definition $X \subseteq Y$. To complete the
proof we show 
the reverse inclusion $Y \subseteq X$. Let $\by= b_0b_1b_2 \cdots  \in Y$. By hypothesis the  initial word $b_0 b_1 \cdots b_k \in Y$ does
not contain any element of $\sB^{\perp}(X)$ , so it must occur as a block inside some word $\bx= a_0a_1a_2 \cdots \in X$, for if it did not this
would contradict  maximality of $\sB^{\perp} (X)$.
Say it is  positions $a_j a_{j+1} \cdots a_{j+k} = b_0 b_1 \cdots b_k$. Now $\by_k:= S^j \bx = .b_0b_1 \cdots b_k a_{k+1} \cdots \in S^k X \subseteq X$.
We now have a sequence $\{ \by_k: k \ge 0\}$ with $\by_k \in X$ that converges in the sequence topology to $\by \in Y$. Since $X$ is closed, we deduce  $\by \in X$
as required.
\end{proof}

We give examples of allowed  behavior  and of non-behavior of closed shift-stable sets.

\begin{exmp}\label{exmp:descending_chain} 
There exists a shift-stable closed set $X$ which yields an  infinite strictly descending chain 
of inclusions under application of the shift; i.e.:
$$
X \supsetneqq SX \supsetneqq S^2X \supsetneqq S^3X \supsetneqq \cdots.
$$
To construct  $X$, define for each  $k \ge 4$ the set  $X_k := (0^k 1)^k \{ 000, 111\}^{\NN}.$ 
That is,   $X_k$
has a  fixed finite prefix $(0^k1)^k$ of length $k(k+1)$ followed by a full $2$-block shift  
$$
Y = \{ 000, 111\}^{\NN}.
$$ 
Note that $S^3 Y=Y$. 
We now set 
$$
X :=  \bigcup_{k=4}^{\infty} \big(\bigcup_{n=0}^{\infty} S^n X_k \big). 
$$
The set  $X$ is shift-stable, since 
$$
SX= \bigcup_{k=4}^{\infty} \big(\bigcup_{n=1}^{\infty} S^n X_k \big)  \subseteq X.
$$
Every element of $X$ is an (eroded) finite prefix followed by a member of $Y$, $SY$, or $S^2Y$. 
The set $X$ is closed because the only limit point obtainable in $\sA^{\NN}$
from repeated shifts of blocks in the finite  prefixes alone is the vector $0^{\infty}$, which already belongs to $Y$.

To show all inclusions are strict, we note 
for $0 \le j \le 3$ the set $S^{j}X$ contains the word $0^{4-j}1 (0^4 1)^3 (000)^{\infty}$, which is
not contained in any $S^mX$ for $m \ge j+1$. 
For $j\ge 4$ each set $S^jX$ contains the word $1(0^{j}1)^{j-1}(000)^{\infty}$, which is not contained in
any $S^mX$ for $m \ge j+1$. 
\end{exmp}


\begin{exmp} \label{exmp:215} 
{\rm (Shift-stability  is not  always preserved under interleavings)} 
 The one-sided { Fibonacci shift}  $X_F$  having $11$ as a  forbidden block  and the one-sided { anti-Fibonacci shift}
  $X_{\text{AF}}$ 
 having  $00$ as a forbidden block  are both closed, shift-invariant sets.
We show  their $2$-fold interleaving  $Y = X_{\text{AF}} \pr X_{F}$  is not shift-stable. Indeed $X_{\text{AF}}$ allows the initial block $0110$, and $X_{F}$ allows the initial block $010$,
whence $X_{\text{AF}} \pr X_{F}$ allows the initial block $0011100$, so $SY$ contains the initial block $011100$.  If  $SY \subseteq Y$,
then there is a $\by=\by_1\pr\by_2\in Y$ with initial block $011100$. But this means $\by_2\in X_{F}$ has initial block $110$, which is a forbidden block of the Fibonacci shift, a contradiction showing that $SY \not\subseteq Y$.
(We do  have
$S^2 Y = Y$.) 
\end{exmp} 

\begin{exmp}\label{exmp:219a} 
{\rm (One-sided shifts)} 
The notion of   {\em one-sided shift} $X$ defined by Lind and Marcus  \cite[Sect. 12.8]{LM95} 
consists of those    sets $X \subseteq \sA^{\NN}$ that are the restriction to positions $k \ge 0$
of all  sequences in a two-sided shift $X_{\pm}$ described by forbidden blocks. 
One-sided shifts  $X$ are necessarily closed and  shift-invariant: $SX=X$, so  they form a strict subclass of
closed shift-stable $X$.

The difference between one-sided shifts and closed shift-stable sets  is visible at the level of {\em minimal  forbidden blocks},
which  are forbidden blocks that do not
contain any other forbidden block as a strict sub-block.  For a one-sided shift-stable set $X$ we let  
$\sB_{\min}^{\perp} (X)$ denote its minimal forbidden block set.
For a two-sided shift $X_{\pm}$ 
we let  $\sB_{\min, \pm}^{\perp}(X_{\pm})$  denote its
minimal forbidden block set . 
Now  consider  the closed set $Y= \{ 001^{\infty}, 01^{\infty}, 1^{\infty}\}$ which has $SY= \{ 01^{\infty}, 1^{\infty} \} \subset X$,
so is  shift-stable but not shift-invariant. Here $S^2Y = \{ 1^{\infty}\}$ is shift-invariant.  It is easy to check
that $\sB_{\min}^{\perp}(Y) = \{ 1001, 101, 000\}$. 
The  two-sided shift $Y^{\pm}$ determined 
by this set of forbidden blocks is $Y^{\pm} = \{ 1^{\ZZ}\} \in \sA^{\ZZ}$, because any bi-infinite word 
that contains  a $0$ must also contain one
of the patterns $101, 1001, 000$ and so is excluded. 
However  $Y_{\pm}$
has  minimal forbidden block set $\sB_{min, \pm}^{\perp} (Y_{\pm}) = \{ 0\}$ viewed as a two-sided shift. 
The  one-sided shift $\tilde{Y}$ determined from $Y_{\pm}$, using the Lind and Marcus prescription  has
 $\tilde{Y}= S^2Y= \{ 1^{\infty} \}$. The shift-stable sets  $Y$ and $SY$ cannot be obtained
by the Lind and Marcus prescription'  their minimal forbidden block sets are not  minimal forbidden block
sets of any two-sided shift. 
\end{exmp}

%
%
\subsection{Weakly shift-stable sets}\label{subsec:28}

The  notion  of {\em  weak shift-stability}  provides a large  class of sets $ X \subseteq \sA^{\NN}$  which respect the 
shift operator and  are closed under all decimation and interleaving operators. 
 This class of sets  includes all path sets studied in \cite{AL14a}, see \cite{ALS20}.

\begin{defn}\label{def:220} 
A general set $X \subseteq \sA^{\NN}$ is {\em weakly shift-stable} if there are $\ell > k \ge 0$ such
that $S^{\ell} X \subseteq S^kX$. We call $p = \ell-k$ an {\em eventual period} for this shift semi-stable set.
\end{defn}

The notion of   {\em eventual period} of $X$  reflects the inclusion 
$$
S^{ \ell +j} X = S^{(k+j) +p} X\subseteq S^{k+j } X  \quad \mbox{ for all} \quad j \ge 0.
$$

Theorem \ref{thm:operation-closure}  shows that the  class $\sW(\sA)$ of all weakly shift-stable sets is closed under all decimation and interleaving operations:
\begin{proof}[Proof of Theorem \ref{thm:operation-closure}] 
(1) Weak shift-stability $S^{\ell}X \subseteq S^kX$ gives $S^{\ell  +j} X \subseteq  S^{k+j}X$ for all $ j \ge 0$.
Setting $p= \ell-k$, we deduce for $m \ge k$ that 
\begin{equation}\label{eqn:inclusion3} 
S^{m + jp}(X) \subseteq S^{m}X \quad  \mbox{ whenever} \quad   j \ge 1.
\end{equation} 
By  Proposition \ref{prop:decimation_shift} 
we have, for $ j \ge 0$,  $n \ge 1$, 
$$
S^{\ell p} \psi_{j,n}(X)  = \psi_{j+ \ell p n,  n}(X) =  \psi_{j, n}(S^{\ell p n}X) \subseteq  \psi_{j, n} (S^{k p n} X) = S^{kp} \psi_{j, n}(X),
$$
the inclusion holding because $S^{\ell pn} (X) \subseteq S^{kpn}(X)$ by \eqref{eqn:inclusion3}, since the difference of iterations
is a multiple of $p$ and $kpn \ge k$.

(2) Let $X_j$ be  weakly  shift-stable with parameters $(\ell_j, k_j)$, for $ 0 \le j \le n-1$, and $p_j= \ell_j- k_j$.
We assert that $Y= (\pr_n)_{i=0}^{n-1} X_i$ is weakly shift-stable with an eventual period $p= p_0p_1 \cdots p_j$.
Indeed, setting $k=\max_j (k_j)$ and $\ell=k+1$, we have, using Proposition \ref{prop:interleaving-shift}:
\begin{eqnarray*}
S^{\ell p  n} Y & = & S^{\ell p n} (X_0 \pr X_1 \pr \cdots \pr X_{n-1}) \\
&= & (S^{\ell p} X_0)  \pr (S^{\ell p}  X_1) \pr \cdots \pr (S^{\ell p} X_{n-1}) \\
&\subseteq & (S^{kp} X_0) \pr (S^{kp} X_1) \pr \cdots \pr (S^{kp} X_{n-1}) 
=   S^{kpn} X^{[n]}.\\
\end{eqnarray*}
The  third line above used the inclusions $S^{\ell p} X_i \subseteq S^{k p}X_i$ for $0 \le i \le n-1$, which follow from \eqref{eqn:inclusion3},
since $k \ge k_i$, and $p_i$ divides $p$.

(3) We have by Proposition \ref{prop:interleaving-shift} and Proposition \ref{prop:decimation_shift}:  
\begin{eqnarray*}
S^{\ell n} X^{[n]} & = & S^{\ell n} \big(\psi_{0,n}(X) \pr \psi_{1,n}(X)  \pr \cdots \pr \psi_{n-1,n}(X)\big)  \\
&=& S^{\ell}\psi_{0,n}(X) \pr S^{\ell}\psi_{1,n}(X)  \pr \cdots \pr S^{\ell}\psi_{n-1,n}(X)
\\
&= & \psi_{0, n}(S^{\ell n} X)  \pr \psi_{1, n}(S^{\ell n} X) \pr \cdots \pr \psi_{ n-1,n} (S^{\ell n} X)
\end{eqnarray*}
Now, applying the hypothesis $S^{\ell}X \subseteq S^{k}X$:
\begin{eqnarray*}
S^{\ell n} X^{[n]} &= & \psi_{0, n}(S^{\ell n} X)  \pr \psi_{1, n}(S^{\ell n} X) \pr \cdots \pr \psi_{ n-1,n} (S^{\ell n} X)
\\
&\subseteq & \psi_{0, n}(S^{kn} X)  \pr \psi_{1, n}(S^{kn} X) \pr \cdots \pr \psi_{ n-1,n} (S^{kn} X) \\
&= & S^{kn} (\psi_{0,n}(X) \pr \psi_{1,n}(X)  \pr \cdots \pr \psi_{n-1,n} (X)) \\
&=  &S^{kn} X^{[n]}.
\end{eqnarray*}
Thus $X^{[n]}$ is weakly shift-stable. 
\end{proof}

\begin{rem}\label{rem:98}
Path sets, studied in \cite{AL14a}, are closed subsets of $\sA^{\NN}$ describable as
infinite paths in graphs of finite automata. Such sets are not always shift-stable.
In \cite{ALS20} it is shown they  are always weakly shift-invariant, so they are weakly shift-stable.
\end{rem}

%
%
\section{Entropy of interleavings for general sets} \label{sec:entropy}

We study studies two  notions of entropy for general sets $X \subseteq \sA^{\NN}$,
topological entropy $H(X)$ and prefix  entropy $H_p(X)$, defined for all sets $X$, and
we also study  a notion of stable
prefix topological entropy which only certain sets $X$ possess.

%
%
\subsection{Topological entropy and prefix 
topological entropy}\label{subsec:entropy}

We  recall  two   notions of topological entropy for  general sets $X \subseteq \sA^{\NN}$,
following the paper \cite{AL14a}, given  in Definition \ref{defn:217} and Definition \ref{defn:218}(1). 

(1) The {\em topological entropy} of $X$ is
\[ 
H_{\topp}(X) := \limsup_{k \to \infty} \frac{1}{k} \log N_k(X),
\]
where $N_k(X)$ counts the number of distinct blocks of length $k$ to be found across all words $\bx \in X$.
It is defined as a limsup, but the limit always exists.

(2) The {\em prefix entropy} (or {\em path topological entropy}) of $X$  is
\[ 
H_p(X) := \limsup_{k \to \infty} \frac{1}{k} \log N_k^I{(}X) 
\]
where $N_k^I(X)$ counts the number of distinct prefix blocks
 $b_0b_1\cdots b_{k-1}$ of length $k$ found 
across  all words $\bx \in X$.

As remarked  in Section \ref{subsubsec:134} for $H_{\topp}(X)$  the $\limsup$ is always a  limit.
However the limsup is  needed in the definition of  prefix entropy, as shown by the next example.

\begin{exmp}\label{exmp:prefix-top-limsup}
(The limit of $ \frac{1}{k} \log N_k^{I}(X)$ 
 may  not exist)  
Take $X_0 =\prod_{j=0}^{\infty} \sA_j$ where $\sA_j= \{0\}$ for $0 \le j \le 3$ and, for $m \ge 1$,
\begin{enumerate}
 \item[(i)]  
  $\sA_j=\{0\}$ for 
$2^{2m} \le j \le 2^{2m+1} -1$ 
\item[(ii)]  
$\sA_j=\{0, 1\}$ for
 $2^{2m+1}\le j \le 2^{2m+2}-1$. 
\end{enumerate} 
Then $X_0$ is a closed subset of $\sA^{\NN}$
having  values $\frac{1}{k} \log N_k^{I}(X_0)$ 
that oscillate between $\frac{1}{3} \log 2$ and $\frac{2}{3} \log 2$ infinitely often as $k \to \infty$,
with minima at $k= 2^{2m+1}$ and maxima at $k=2^{2m+2}$. 
Here the $\limsup$ gives  $H_p(X) = \frac{2}{3} \log 2.$ On the other hand, property (ii) implies 
$N_k(X_0) = 2^k$ so  $H_{\topp}(X_0) = \log 2.$
\end{exmp}

Example \ref{exmp:prefix-top-limsup} shows, first,  that  $H_p(X)$ cannot in general be defined as a limit, and second, that $H_p(X)$ and $H_{\topp}(X)$ need not be equal.

\begin{prop} \label{lem:61} 
For general sets $X \subseteq \sA^{\NN}$, the following hold.

(1)  Let $\overline{X}$ denote the closure of $X$ in the natural topology on $\sA^{\NN}$. 
One has $H_{\topp}(X) = H_{\topp}(\overline{X})$ and $H_p(X) = H_p(\overline{X})$.

(2) One has 
$$
H_p(X) \le H_{\topp}(X) \le \log |\sA|,
$$
\end{prop}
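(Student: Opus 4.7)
The plan is to reduce both claims to elementary identities and inequalities on the block-counting functions $N_k(X)$ and $N_k^I(X)$, and then take the $\limsup$ as $k \to \infty$.

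For part (1), I would first establish the two equalities of block counts
\begin{equation*}
N_k(X) = N_k(\overline{X}) \quad \text{and} \quad N_k^I(X) = N_k^I(\overline{X}) \quad \text{for all } k \ge 1.
\end{equation*}
The inclusions $X \subseteq \overline{X}$ give ``$\le$'' immediately for both. For the reverse inequalities, I would use the fact that convergence in the product topology on $\sA^{\NN}$ is pointwise convergence of coordinates, with each coordinate eventually constant. Given a word $\by \in \overline{X}$ and a block $b_0 b_1 \cdots b_{k-1}$ appearing in $\by$ (either as an arbitrary block or as a prefix), choose a sequence $\bx_n \in X$ with $\bx_n \to \by$; then for $n$ sufficiently large the first $k$ coordinates (or the relevant window of $k$ coordinates) of $\bx_n$ agree with those of $\by$. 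Hence the block already appears in some element of $X$, establishing ``$\ge$''. Once the block count equalities are proved, applying $\limsup_{k \to \infty} \frac{1}{k} \log(\cdot)$ to both sides yields $H_{\topp}(X) = H_{\topp}(\overline{X})$ and $H_p(X) = H_p(\overline{X})$.

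For part (2), the inequality $H_p(X) \le H_{\topp}(X)$ follows from the trivial observation that every length-$k$ prefix block of a word $\bx \in X$ is in particular a length-$k$ block occurring in $\bx$, so $N_k^I(X) \le N_k(X)$ for every $k \ge 1$; taking $\frac{1}{k}\log$ and then $\limsup$ gives the desired inequality. For the upper bound $H_{\topp}(X) \le \log |\sA|$, note that every length-$k$ block is a word in $\sA^k$, so $N_k(X) \le |\sA|^k$, which yields $\frac{1}{k}\log N_k(X) \le \log |\sA|$ for all $k$ and hence the same bound in the limit.

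No step presents a real obstacle: the only point that might warrant a sentence of care is the passage from pointwise convergence to agreement on a finite window of coordinates, which is immediate from the definition of the product topology on $\sA^{\NN}$. Everything else is direct monotonicity of $\frac{1}{k}\log(\cdot)$ combined with the elementary block-count inequalities.
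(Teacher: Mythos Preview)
Your proposal is correct and follows essentially the same approach as the paper: both arguments reduce to the block-count identities $N_k(X)=N_k(\overline{X})$, $N_k^I(X)=N_k^I(\overline{X})$ (via the fact that any finite block in a limit point already occurs in $X$) and the chain $N_k^I(X)\le N_k(X)\le|\sA|^k$. The paper's proof is terser, but the underlying logic is identical.
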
 

\begin{proof} 
(1) The definitions of $H_{\topp}(X)$ and $H_p(X)$ depend only on finite symbol sequences
(resp. finite intial symbol sequences) that occur in $X$. However all infinite words in
$\overline{X} \smallsetminus X$ have all finite symbol sequences (resp. finite initial
symbol sequences) occurring for some word in $X$.

(2)  The bounds  follow from
$N_k^{I} (X) \le N_k(X)  \le |\sA|^k$. 
\end{proof}

\begin{exmp}\label{exmp:64}
(Strict inequality $H_p(X) < H_{\topp}(X)$ may occur for general $X$)
Let $\sA= \{0,1\}$, and let the closed set $X$ consist of all words which, for $m \ge 1$,
\begin{enumerate}
 \item[(i)]  
  have symbol $0$ in each position 
$2^m \le k \le 2^{m+1}-m$,
\item[(ii)]  
allow arbitrary symbols $\{0, 1\}$   in positions
$2^{m+1}-(m-1) \le k \le 2^{m+1}-1$.
\end{enumerate} 
Then $N_k = 2^k$ for all $k \ge 1$, because (ii) gives arbitrarily long blocks of the
full shift, whence $H_{\topp}(X)= \log 2$. 

On the other hand, for  a given symbol position  $k$ there are at most
$ (\log_2 k)^2 $ symbol positions of type (ii), so we obtain  
 $N_k^{I}(X) \le 2^{ (\log_2 k)^2}$. It follows that  $H_p(X)=0$.
\end{exmp}

%
\subsection{Entropy and the shift operator}\label{subsec:entropy2}

The shift operator preserves both entropies $H_{\topp}(X)$ and $H_p(X)$ separately.
\begin{prop} \label{thm:65}
For general sets $X \subseteq \sA^{\NN}$ on a finite alphabet $\sA$  the following hold.

(1) The  shift operator $S$ preserves  topological entropy:
$$
H_{\topp}(SX) = H_{\topp}(X).
$$.

(2) The shift operator $S$ preserves prefix  entropy:  
$$
H_p(SX) =  H_{p}(X).
$$
\end{prop}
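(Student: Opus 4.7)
The plan is to prove both equalities by establishing two-sided inequalities between the block-count functions on $X$ and $SX$ that differ only by a multiplicative constant depending on $|\sA|$; taking $\tfrac{1}{k}\log$ will then wash away this constant and give equality of entropies. The key device in both parts is the drop-first-letter map on initial blocks.

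For part (2), I would exhibit the map sending each initial block $y_0y_1\cdots y_k$ of length $k+1$ of some $\by\in X$ to the initial block $y_1\cdots y_k$ of length $k$ of $S\by\in SX$. This map is well-defined and surjective: any initial block $z_0\cdots z_{k-1}$ of length $k$ of $SX$ arises as an initial block of $\bw=S\by$ for some $\by\in X$, and $y_0y_1\cdots y_k$ then lifts $z$. Each fiber has cardinality at most $|\sA|$ since only the choice of the leading letter $y_0$ varies. This yields
\[
N_k^I(SX)\ \le\ N_{k+1}^I(X)\ \le\ |\sA|\cdot N_k^I(SX).
\]
Taking $\limsup$ of $\tfrac{1}{k+1}\log$ of the right-hand inequality gives $H_p(X)\le H_p(SX)$, and taking $\limsup$ of $\tfrac{1}{k}\log$ of the left-hand inequality gives $H_p(SX)\le H_p(X)$; the conversion factors $\tfrac{k+1}{k}$ and $\tfrac{k}{k+1}$ tend to $1$ and are harmless.

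For part (1), every length-$k$ block of $SX$ is automatically a length-$k$ block of $X$, so $N_k(SX)\le N_k(X)$ and hence $H_{\topp}(SX)\le H_{\topp}(X)$. For the reverse, I would observe that every length-$k$ block of $X$ appears either at position $0$ of some $\bx\in X$, in which case it is counted by $N_k^I(X)$, or at some position $\ge 1$, in which case it is a length-$k$ block of $SX$, so
\[
N_k(X)\ \le\ N_k^I(X) + N_k(SX).
\]
To control $N_k^I(X)$ by $N_k(SX)$, I would reuse the drop-first-letter device to get $N_k^I(X)\le |\sA|\cdot N_{k-1}^I(SX)\le |\sA|\cdot N_{k-1}(SX)$, then note that extending each length-$(k-1)$ block of $SX$ by one further symbol gives a surjection from length-$k$ blocks onto length-$(k-1)$ blocks of $SX$, so $N_{k-1}(SX)\le N_k(SX)$. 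Assembling yields $N_k(X)\le (1+|\sA|)N_k(SX)$, and passing to $\tfrac{1}{k}\log$ gives $H_{\topp}(X)\le H_{\topp}(SX)$.

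The main point to notice, rather than a serious obstacle, is that the naive estimate $N_k^I(X)\le N_k(X)$ is useless here since $N_k^I(X)$ can be of the same exponential order as $N_k(X)$; only the drop-first-letter comparison $N_k^I(X)\le|\sA|\cdot N_{k-1}^I(SX)$ gives the non-trivial control that both parts of the proposition need.
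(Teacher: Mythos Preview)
Your proof is correct. Part (2) is essentially identical to the paper's argument: both establish the sandwich $N_k^I(SX)\le N_{k+1}^I(X)\le |\sA|\,N_k^I(SX)$ via the drop-first-letter map on prefixes.

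For part (1), however, you take a longer route than needed. The paper applies the very same drop-first-letter device directly to \emph{general} blocks: any length-$(k+1)$ block $x_jx_{j+1}\cdots x_{j+k}$ of $\bx\in X$ yields, by dropping $x_j$, a length-$k$ block of $S\bx\in SX$, and this map is at most $|\sA|$-to-one, giving $N_{k+1}(X)\le |\sA|\,N_k(SX)$ in one stroke. Combined with the trivial $N_k(SX)\le N_k(X)$, this mirrors part (2) exactly. Your detour through $N_k(X)\le N_k^I(X)+N_k(SX)$ and then bounding $N_k^I(X)$ via prefix counts of $SX$ is valid but unnecessary; it obscures the parallel between the two parts and introduces extra steps (including the monotonicity $N_{k-1}(SX)\le N_k(SX)$) that the direct argument avoids entirely.
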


\begin{proof}
(1) We have, for a finite alphabet,  
$$
N_k(X) \ge N_k(SX) \ge \frac{1}{|\sA|} N_{k+1}(X), 
$$
since there are at most $|\sA|$ choices for a letter that is dropped.
Using a  limsup definition for $H_{\topp}(X)$ (although the limit always exists) we have
$$
H_{\topp}(SX) = \limsup_{k \to \infty} \frac{1}{k} \log N_k(SX) 
\le \limsup_{k \to \infty} \frac{1}{k} \log
N_k(X)  = H_{\topp}(X).
$$
On the other hand,
\begin{eqnarray*}
H_{\topp}(SX) &=& \limsup_{k \to \infty} \frac{1}{k} \log
N_k(SX) \\
& \ge&  \limsup_{k \to \infty} \left(\frac{1}{k} \log 
N_{k+1}(X)  - \frac{1}{k} \log |\sA|\right)\\
&  = & \limsup_{k \to \infty} \frac{1}{k+1} \log
N_{k+1}(X)  = H_{\topp}(X).
\end{eqnarray*} 

(2) For a finite alphabet $\sA$ we  have
\begin{equation}\label{eqn:trapped} 
N_{k+1}^{I}(X) \ge N_k^{I}(SX) \ge \frac{1}{|\sA|} N_{k+1}^{I}(X).
\end{equation}
The result $H_p(SX) = H_p(X)$ is proved similarly to (1). 
\end{proof}

%
%
\subsection{Entropy and decimations}\label{subsec:entropy-decimation}

Entropies  may change under decimation, subject to the following inequalities.  
\begin{prop} \label{thm:9111}
For general sets $X \subseteq \sA^{\NN}$ on a finite alphabet $\sA$  the following hold.
Then for all $n \ge 1$ and all $i \ge 0$,
$$
0 \le  H_{\topp}(\psi_{i,n}(X)) \le \min (n H_{\topp}(X), \log |\sA|).
$$
and
$$
0 \le  H_{p}(\psi_{i,n}(X)) \le \min (n H_{p}(X), \log |\sA|).
$$
All equalities can be attained.
\end{prop}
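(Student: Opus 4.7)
The plan is to prove the block-counting inequalities underlying the two entropy bounds, and then exhibit examples realizing equality in each of the extreme cases.

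First I would establish the core combinatorial estimate. A block of length $k$ appearing in some $\psi_{i,n}(\bx)$ starting at position $j$ is determined by the symbols of $\bx$ at positions $i+jn, i+jn+n, \ldots, i+jn+(k-1)n$, hence by a contiguous block of $\bx$ of length $(k-1)n+1$. Thus
\begin{equation*}
N_k\bigl(\psi_{i,n}(X)\bigr) \;\le\; N_{(k-1)n+1}(X) \;\le\; N_{kn}(X),
\end{equation*}
so $\frac{1}{k}\log N_k(\psi_{i,n}(X)) \le n \cdot \frac{1}{kn}\log N_{kn}(X)$, and taking $\limsup$ gives $H_{\topp}(\psi_{i,n}(X)) \le n H_{\topp}(X)$. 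The bound $H_{\topp}(\psi_{i,n}(X)) \le \log|\sA|$ is Proposition \ref{lem:61}(2), and the lower bound is immediate.

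Next I would prove the prefix version, which is slightly more delicate because a prefix of $\psi_{i,n}(\bx)$ is \emph{not} a prefix of $\bx$ when $i>0$. However a prefix of length $k$ of $\psi_{i,n}(\bx)$ is determined by the symbols of $\bx$ at positions $i, i+n, \ldots, i+(k-1)n$, and hence by the prefix of $\bx$ of length $i+(k-1)n+1$. Therefore
\begin{equation*}
N_k^{I}\bigl(\psi_{i,n}(X)\bigr) \;\le\; N_{i+(k-1)n+1}^{I}(X).
\end{equation*}
Writing $m(k) := i+(k-1)n+1$, one has $m(k)/k \to n$ as $k\to\infty$, so
\begin{equation*}
\frac{1}{k}\log N_k^{I}(\psi_{i,n}(X)) \;\le\; \frac{m(k)}{k}\cdot\frac{1}{m(k)}\log N_{m(k)}^{I}(X),
\end{equation*}
and taking $\limsup$ yields $H_p(\psi_{i,n}(X)) \le n H_p(X)$. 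Combined with $H_p \le \log|\sA|$ from Proposition \ref{lem:61}(2), this completes both upper bounds.

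Finally, for sharpness I would give the following examples on a nontrivial alphabet $\sA$ with $|\sA|\ge 2$. Taking $X = \{\bz\}$ a single point gives $H_{\topp}(\psi_{i,n}(X)) = H_p(\psi_{i,n}(X)) = 0$, attaining the lower bounds. For the upper bounds, fix $i \ge 0$ and $n \ge 1$ and define
\begin{equation*}
X \;=\; \prod_{k=0}^{\infty}\sA_k \qquad \text{with } \sA_k = \sA \text{ if } k\equiv i\,(\bmod\, n) \text{ and } \sA_k = \{0\} \text{ otherwise}.
\end{equation*}
Then $X$ is closed, $\psi_{i,n}(X) = \sA^{\NN}$ is the full one-sided shift, and $N_k(X) = N_k^{I}(X) = |\sA|^{\lceil k/n\rceil}$, giving $H_{\topp}(X) = H_p(X) = \frac{1}{n}\log|\sA|$. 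Hence
\begin{equation*}
H_{\topp}(\psi_{i,n}(X)) \;=\; H_p(\psi_{i,n}(X)) \;=\; \log|\sA| \;=\; n H_{\topp}(X) \;=\; n H_p(X),
\end{equation*}
so every asserted equality is attained. There is no serious obstacle here; the only point requiring care is the prefix bound for $i \ge 1$, where one must pass through prefixes of the ambient words rather than decimations of prefixes.
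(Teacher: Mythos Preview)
Your argument is correct and follows essentially the same route as the paper: the same block-containment inequality $N_k(\psi_{i,n}(X)) \le N_{nk}(X)$ for topological entropy, the same prefix-containment inequality $N_k^I(\psi_{i,n}(X)) \le N_{m(k)}^I(X)$ with $m(k)/k \to n$ for prefix entropy, and essentially the same sharpness example (the paper interleaves $\sA^{\NN}$ with copies of $\{0^\infty\}$, which is your product set with $i=0$, and reads off both the upper and lower extremes from a single $X$ via $\psi_{0,n}$ and $\psi_{1,n}$).

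One small correction: your exact formula $N_k(X) = N_k^I(X) = |\sA|^{\lceil k/n\rceil}$ is not literally true. For $N_k(X)$ you must union over all starting positions $j$, and blocks starting at different residues $j \bmod n$ have their free slots in different locations, so for instance with $n=3$, $k=2$, $|\sA|=2$ one gets $N_k(X)=3$, not $2$. For $N_k^I(X)$ the count is $|\sA|$ raised to the number of indices $r \in \{0,\dots,k-1\}$ with $r \equiv i \pmod n$, which is $\lfloor k/n\rfloor$ or $\lceil k/n\rceil$ depending on $i \bmod n$, not always $\lceil k/n\rceil$. None of this affects your entropy conclusions, since the counts are sandwiched between $|\sA|^{\lfloor k/n\rfloor}$ and $n|\sA|^{\lceil k/n\rceil}$; just replace the equalities by these two-sided bounds and the computation $H_{\topp}(X)=H_p(X)=\frac{1}{n}\log|\sA|$ goes through unchanged.
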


\begin{proof}
The lower bounds are trivial, and the upper bounds $\log |\sA|$ are trivial.  
For the upper bounds, the symbols any block of size $k$ of $\psi_{i, n}(X)$ are contained
( in successive positions with index $i\, (\bmod\, n)$) inside  a block of
length $nk$ of $X$ with the first symbol aligned, hence  $N_k(\psi_{i,n}(X)) \le N_{nk}(X)$.
We have 
\begin{eqnarray*}
H_{\topp}( \psi_{i,n}(X)) &= & \limsup_{k \to \infty} \frac{1}{k} \log N_k(\psi_{i,n}(X))\\
& \le & \limsup_{k \to \infty} \frac{1}{k} \log N_{nk}(X) \le  n \left(\limsup_{k \to \infty} \frac{1}{k}  \log N_{k}(X)\right)= n H_{\topp}(X).
\end{eqnarray*} 
For the corresponding prefix entropy upper bound we use the bound  $N_{k} ^{I} (\psi_{i,n}(X)) \le |\sA|^i N_{nk}^{I}(X)$,
obtained by containment of a prefix of length $k$ in $\psi_{i,n}(X)$ inside a prefix of $X$ of length $nk+i$. 

To show the bounds are attained, take 
the interleaved set $X= (\pr_n)_{i=0}^{n-1} X_i$ where $X_0= \sA^{\NN}$ and each $X_i = \{ 0^{\infty}\}$ for $1\le i \le n-1$.
We have $H_{\topp}(X) = H_{p}(X) = \frac{1}{n} \log |\sA|$ (by counting blocks). 
For the upper bound we have
$H_{\topp}(\psi_{0,n} (X) ) = H_{p}(\psi_{0,n}( X) = \log |\sA|$.  
For the lower bound $H_{\topp}(\psi_{1,n} (X) ) = H_{p}(\psi_{1,n}( X)) = 0$. 
\end{proof}

%
%
\subsection{Prefix  entropy upper bound for interleaving}\label{subsec:prefix_entropy_bounds}

We prove  a general upper bound for the prefix entropy of an $n$-fold interleaving in terms of the
prefix entropies of its  factors, which is Theorem  \ref{thm:entthm1}.

\begin{proof}[Proof of Theorem \ref{thm:entthm1}] 
By definition
\begin{equation}
H_{p}(X_0 \pr \cdots \pr  X_{n-1}) =
\limsup_{k\to\infty}\frac{1}{k}\log\bigg{(}N_k^I(X_0 \pr \cdots \pr X_{n-1})\bigg{)},
\end{equation}
where $N_k^I(X)$ is the number of distinct initial blocks of length $k$ occurring in the symbol sequences of $X$.
Now we partition into subsequences $\{ nk+j: k \ge 0\}$ for $0 \le j \le n-1$ to obtain:
\begin{equation*}
H_p(X_0 \pr \cdots \pr X_{n-1})= \max_{0 \le j \le n-1}
\limsup_{k\to\infty}\frac{1}{nk+j}\log\bigg{(}N_{nk+j}^I(X_0 \pr \cdots \pr X_{n-1})\bigg{)}.
\end{equation*}
Call the terms on the right side 
$$
H_{p, j}(X) :=\limsup_{k\to\infty}\frac{1}{nk+j}\log\bigg{(}N_{nk+j}^I(X_0 \pr \cdots \pr X_{n-1})\bigg{)}
$$ 
for $0 \le j \le n-1$.
The number of distinct initial $(nk+j)$-blocks in 
$X_0 \pr \cdots \pr X_{n-1}$ is simply the product of the number of distinct initial $(k+1)$-blocks in each of $X_0, X_1, \ldots, X_{j-1}$ and of the
the  distinct initial $k$-blocks in $X_j, X_{j+1}, \cdots X_{n-1}$. Thus we obtain, for a fixed $j$, $0 \le j \le n-1$, 
\begin{eqnarray*}
H_{p, j}(X) &=& \limsup_{k\to\infty}\frac{1}{nk+j}\log\bigg{(}N_{nk+j}^I(X_0 \pr \cdots \pr X_{n-1})\bigg{)}\\
&= &\limsup_{k \to\infty} \frac{1}{nk+j}\log\bigg{(}\prod_{i=0}^{j-1} N_{k+1}^{I}(X_i) \cdot \prod_{i=j}^{n-1} N_k^{I}(X_i) \bigg{)} \\
&=&\limsup_{k\to\infty}\frac{1}{nk+j}\left(\sum_{i=1}^{j-1}\log N_{k+1}^I(X_i)+\sum_{i=j}^{n-1}\log N_{k}^I(X_i)\right)
\end{eqnarray*}

By \eqref{eqn:trapped}, which applies to general sets $X\subseteq\sA^{\NN}$, each $\log N_{k+1}^I(X_i)$ differs from $\log N_k^I(X_i)$ by no more than $\log|\sA|$. Since the entire sum is divided by $nk+j$, this difference does not affect the limsup, so:
\begin{eqnarray*}
H_{p,j}(X)&=&\limsup_{k\to \infty} \frac{1}{nk+j} \sum_{i=0}^{n-1} \log N_{k}^I(X_i)
=\frac1n\limsup_{k\to \infty} \frac{1}{k} \sum_{i=0}^{n-1} \log N_{k}^I(X_i) \nonumber
\\
&\le &\frac1n\sum_{i=0}^{n-1}  \limsup_{k\to\infty}\frac{1}{k}\log N_{k}^I(X_{i})
=\frac{1}{n} \sum_{i=0}^{n-1} H_p(X_i).
\end{eqnarray*}
Thus, all the $H_{p,j}(X)$ are bounded above by $\frac1n\sum_{i=0}^{n-1}H_p(X_i)$. It follows that $H_p=\max_{0\leq j\leq n-1}H_{p,j}(X)$ obeys the same bound.
\end{proof}

\begin{exmp}\label{exmp:910}
($X$ may have full topological entropy and  zero prefix entropy) 
We  start with the  closed set $X_0$  with  alphabet $\sA= \{0,1\}$ defined in Example \ref{exmp:prefix-top-limsup}.
Let a second closed set  $X_1$ consist of all words that allow $\{0\}$ in  index positions where $X_0$ allows $\{0, 1\}$,
and allow $\{0,1\}$ in all index positions where $X_0$ allows only $\{0\}$; i.e., 
 $X_1 =\prod_{j=0}^{\infty} \sA_j'$ where $\sA_j'= \{0,1\}$ for $0 \le j \le 3$ and, for $m \ge 1$,
\begin{enumerate}
 \item[(i)]  
  $\sA_j'=\{0,1\}$ for 
$2^{2m} \le j \le 2^{2m+1} -1$ 
\item[(ii)]  
$\sA_j'=\{0\}$ for
 $2^{2m+1}\le j \le 2^{2m+2}-1$. 
\end{enumerate} 
Then $\sB_k(X_1) = \{0,1\}^k$ for all $k \ge 1$, since (ii) has arbitrarily long blocks of the full shift, whence $H_{\topp}(X)= \log 2$. 
We  have 
$H_p(X_0) = H_p(X_1) = \frac{2}{3} \log 2$, by the same calculation as  in Example \ref{exmp:prefix-top-limsup}. 
We assert  that the interleaved set $X := X_0 \pr X_1$ has
$$
H_p(X) = \frac{1}{2} \log 2 < \frac{1}{2}\big(H_p(X_0) + H_p(X_1)\big) = \frac{2}{3} \log 2 .
$$ 
To compute $H_p(X)$, note that in each pair of consecutive symbol  positions $(2j, 2j+1)$ the words in $X$ have one symbol  frozen
to be $0$ and the other symbol free to be chosen in $\{0,1\}$, where the frozen symbol is the  symbol in position $2j$ for
$2^{2m}\le j < 2^{2m+1}$  and is the symbol in position $2j+1$  for $2^{2m+1} \le j < 2^{2m+2}$.
Thus  $2^{k/2 -1} \le N_k^{I}(X)  \le 2^{k/2 + 1}$ for all $k \ge 0$,  whence  $H_p(X) =\lim_{k\to\infty}\frac1k\log N_k^I(X)= \frac{1}{2} \log 2$. 
\end{exmp}

%
%
\subsection{Stable prefix entropy and  interleaving entropy equality}\label{subsec:stable_prefix_entropy_bounds}

We study  the concept of stable prefix entropy and show its consequences for the behavior of entropy under interleaving. 
Recall from Definition \ref{defn:218} (2)    that a  set $X \subseteq \sA^{\NN}$ has  {\em stable prefix  entropy},
if the prefix entropy can defined as a limit. That  is,   the following limit exists:  
\[ 
H_p(X) := \lim_{k \to \infty} \frac{1}{k} \log N_k^{I}(X).
\]

Recall that  Theorem \ref{thm:entthm-gen} asserts 
 that  stable prefix entropy
is preserved under interleaving, and that stable prefix entropy of all the interleaving factors 
implies equality in the prefix entropy formula of Theorem \ref{thm:entthm1}.


\begin{proof}[Proof of Theorem \ref{thm:entthm-gen}]
Let $X= (\pr_n)_{i=0}^{n-1} X_i$. The inequality $H_p(X) \le  \frac{1}{n} \sum_{i=0}^{n-1} H_p(X_i)$
in Theorem \ref{thm:entthm1} arose in  interchanging a finite sum with a $\limsup$.
Using the stable prefix hypothesis 
for each $X_i$, we obtain a matching lower bound. 

By definition $H_p(X):=\limsup_{k\to\infty}\frac{1}{k} \log N_k^{I}(X)$. Let $H_p^{'} (X):=\liminf_{k\to\infty}\frac{1}{k }N_k(X)$.
It suffices to show that $H_p^{'}(X) \ge \frac{1}{n} \sum_{i=0}^{n-1} H_p(X_i)$ to conclude that $H_p^{'}(X) = H_p(X)$
has a limit which is  the desired value $\frac{1}{n} \sum_{i=0}^{n-1} H_p(X_i)$.

Partitioning into subsequences $\{nk+j:k\geq0\}$ for $0\leq j\leq n-1$ as in the proof of Theorem \ref{thm:entthm1}, we get:
\begin{equation*}
H_p^{'}(X)= \min_{0 \le j \le n-1} \left( 
\liminf_{k\to\infty}\frac{1}{nk+j}\log\bigg{(}N_{nk+j}^I(X_0 \pr \cdots \pr X_{n-1})\bigg{)}\right).
\end{equation*}
Call the right side values $H_{p,j}^{'}(X)$. We have 
\begin{eqnarray*}
H_{p,j}^{'}(X)
&\ge & \frac{1}{n}   \liminf_{k\to\infty} \left( \sum_{i=0}^{n-1}\frac{1}{k}\log N_{k}^I(X_{i}) \right). \\
&\ge&  \frac{1}{n} \sum_{i=0}^{n-1}  \liminf_{k\to\infty}\frac{1}{k}\log N_{k}^I(X_{i})\\
&= & \frac{1}{n} \sum_{i=0}^{n-1}  \lim_{k\to\infty}\frac{1}{k}\log N_{k}^I(X_{i}) = \frac{1}{n} \sum_{i=0}^{n-1} H_p(X_i),
\end{eqnarray*}
where stable prefix entropy was used in the last line. We conclude $H_p^{'}(X) \ge \frac{1}{n} \sum_{i=0}^{n-1} H_p(X_i)$.
\end{proof}

\begin{exmp}\label{rem:99}
(Stable prefix entropy  is not always preserved under decimation) 
 The set $X= X_0\pr X_1$ of Example \ref{exmp:910} has stable prefix entropy, but $X_0= \psi_{0,2}(X)$ does not, as shown in Example \ref{exmp:prefix-top-limsup}.
The set $X_1 =\psi_{1,2}(X)$   does not have stable prefix entropy by a similar analysis.
\end{exmp}

Recall that  Theorem \ref{thm:semistable_prefix_entropy0}
 asserts weak shift-stability implies both stable prefix entropy and equality of the two notions of entropy, $H_p(X)$ and $H_{\topp}(X)$.

\begin{proof}[Proof of Theorem \ref{thm:semistable_prefix_entropy0}] 
 
For any set $X$ we have $N_m^{I}(X) \le N_m(X)$.
By  hypothesis,  $S^{\ell}X \subseteq S^kX$ for some $\ell  \ge k\ge 0.$
Since $X\subseteq Y$ implies $S(X)\subseteq S(Y)$, an easy induction argument shows that 
$S^{\ell+j}X \subseteq S^{k+j}X$ holds for all $j \ge 0$. Since any block of length $m$  in $X$, starting in any position $n$, is an initial block of $S^n(X)$, we may conclude that it is an initial block of $S^{\ell'}(X)$,
for some $\ell' \le \ell$.  Consequently all  such blocks are counted among the initial blocks of $X, SX, \cdots S^{\ell-1}(X)$ of length $m$.
To each such block one can associate an initial block of length $m+\ell$ of $X$ which contains the given block in positions $\ell'$ through $\ell'+m-1$.
Any initial block of length $m+\ell$ can   be counted this way at most $\ell+1$ times, one for each  prefix $\ell'\le \ell$, so we obtain the upper bound  
$$
N_m(X) \le (\ell+1) N_{m+ \ell}^{I}(X).
$$
We then obtain the bounds
$$
N_m^{I}(X) \le N_m(X) \le (\ell+1) |\sA|^{\ell} N_m^{I}(X), 
$$
since 
$N_{m + \ell}^{I}(X) \le |\sA|^{\ell} N_m^{I}(X)$. 
It follows that
$$
\log N_{m}^{I} (X) \le \log N_m(X) \le \log N_m^{I}(X) + C, 
$$
for an absolute constant $C$. Thus
$$
\lim_{m \to \infty} \frac{1}{m} 
\left(\log N_m^{I}(X) - \log N_m(X) \right) =0.
$$
Since the limit $\lim_{m \to \infty} \frac{1}{m} \log N_m(X)$
exists for topological entropy, it must also exist  for prefix entropy, showing stability. 
Moreover, since the limits are the same, $H_p(X)= H_{\topp}(X)$. 
Finally, since  weak shift-stability is preserved under $n$-fold interleaving,
 the entropy equation  \eqref{eqn:prefix-ent-eq3}  for topological entropy follows from Theorem \ref{thm:entthm-gen}.
\end{proof}

%
%
%
\section{Concluding remarks} \label{sec:concluding}

 %
%
\subsection{General interleaving operations} \label{sec:92}

Iterated interleaving factorizations are a special case of factorizations of closed sets  $X \subseteq \sA^{\NN}$ into a
product of closed sets 
obtained  by projections onto subsets of indices $I_j \subseteq \NN$, where the index sets 
$\{ I_j: 0 \le j \le n-1\}$ form a partition of $\NN$. 
Iterated interleaving  factorizations  project onto   a partition of $\NN$ in which each $I_j$  is a complete arithmetic
progression in $\NN$. 

{\em Exact covering systems} are
partitions of $\NN$ into a finite set of disjoint complete arithmetic progressions (of various moduli).
They have been extensively studied, see \cite{EG80}, \cite{Por81} and \cite{PorS02} for surveys. 
There are interesting  necessary and sufficient conditions for a finite set of complete arithmetic progressions to be an
exact cover of $\NN$, starting with Fraenkel \cite{Fr73}, see also Beebee \cite{Beebee92} and Porubsk\'{y} and Sch\'{o}nheim  \cite{PorS03}. 
The exact covers determined by iterated interleaving are the set of {\em natural exact covering systems} 
introduced by Porubsk\'{y} \cite{Por74}, who credits the construction to an unpublished paper of Znam. 
It is known that  not all exact covers can  be obtained by iterated interleaving constructions. 
An example  due to Znam (cf. Guy \cite[Problem F14]{Guy04}) is : 
$$
 \{0 \, (\bmod \, 6); 1\, (\bmod \,10); 2 \, (\bmod \, 15);\, 3, 4, 5, 7-10, 13-16, 19, 20, 22, 23, 25-29 \, (\bmod \, 30)\}.
$$
This set of arithmetic progressions  has $\gcd(6, 10, 15) =1$, while any iterated interleaving factorization 
with an initial $n$-fold interleaving necessarily has all arithmetic progressions in any refinement having periods divisible by $n$.
The natural exact covering systems play a special role in the reversion (inversion under composition) 
of the M\"{o}bius function power series, see   Goulden et al \cite{GGRS19}.

One can introduce more general interleaving operations, which might include arbitrary exact covering systems. 
 For a  set $X \subseteq \sA^{\NN}$,  one can ask which decimations 
 $\psi_{j,n}(X)$ have the property that  $X$ can be written as a topological product $\psi_{j,n}(X)  \times Y$,
 where $Y$ is the projection of $X$ onto the set $I$ of all indices having $i \ne j \, (\bmod \, n)$? 
 Call such a decimation $ \psi_{j,n}(X)$ with this property a {\em generalized factor} of $X$. Can one
  characterize the possible sets of all generalized factors of  $X$, as $X$ varies?

 %
%
\subsection{Iterated interleaving closure operations} \label{sec:93}

One may ask for a given set $X$, what are the set of all interleaving closures of it: $\{ X^{[n]}: n \ge 1\}$. 
We  can define a filtered limit as $n \to \infty$ as follows. Letting $p_k$ denote
the $k$th prime in increasing order, we can define 
$$
X^{[\infty]} := \lim_{n_k= (p_1p_2 \cdots p_k)^k \to \infty} X^{[n_k]},
$$
where the limit exists since $X^{[n_k]} \subseteq X^{[n_{k+1}]}$ by Proposition \ref{prop:cdc}(3),
and for each $n$ one has $n$ divides $n_k$ for all sufficiently large $k$.
The set  $X^{[\infty]}$ will be infinitely factorizable. What can one say about the possible forms of $X^{[\infty]}$?

%
%
\subsection{Characterizing  closed weakly shift-stable sets} \label{sec:96}

Is there a  characterization of closed weakly shift-stable sets $X \subseteq \sA^{\NN}$  having a  parallel with
the characterization by forbidden blocks of closed shift-stable sets given in Proposition \ref{prop:forbidden_block}? 

 %
%

\appendix
\section{Interleaving operad} \label{sec:operadsec}

Operads were systematically developed  by Boardman and Vogt \cite{BoardV73} and  May \cite{May72}  and 
as a vehicle to study iterated loop spaces in stable homotopy theory.  
More recently, operads have been used by researchers in homological algebra, category theory, algebraic geometry, and mathematical physics; see \cite{Stasheff04} for a brief introduction. Interleaving  operations determine a certain kind of operad, giving an application of the operad concept to symbolic dynamics. 
In this Appendix  we  only define operads over the category of sets, although they can be defined over any symmetric monoidal category.

Non-symmetric operads (as in \cite{LV12}, \cite{Giraudo18}) are a weak version of operads which do not require equivariance under actions of symmetric groups on factors.
They provide a convenient framework  to keep track of   properties of an infinite family of $n$-ary operations under iterated composition. 

\begin{defn} \label{operaddef} A \emph{non-symmetric operad} (or {\em plain operad}) $\underline{\mathcal{O}}$ consists of a set $\underline{\mathcal{O}}(n)$ for each natural number $n$ satisfying the following conditions:
\begin{enumerate}[(a)]
\item (composition) for all positive integers $n, k_1, \ldots, k_n$, there is a composition function 
\[
\circ: \underline{\mathcal{O}}(n) \times \underline{\mathcal{O}}(k_1) \times \cdots \times \underline{\mathcal{O}}(k_n) \rightarrow \underline{\mathcal{O}}(k_1+ \cdots + k_n),
\]
written as $(f,f_1,\ldots, f_n) \mapsto f \circ (f_1,\ldots, f_n)$ for elements $f \in \mathcal{O}(n)$ and $f_i \in \underline{\mathcal{O}}(k_i)$;
\item (identity) there is an element $1 \in \underline{\mathcal{O}}(1)$, called the \emph{identity}, such that
\[
f \circ (1,\ldots, 1) = f = 1 \circ f
\]
for all $f$;
\item (associativity) there holds
\begin{align*}
f \circ (f_1 \circ &(f_{1,1}, \ldots, f_{1,k_1}), f_n \circ (f_{n,1}, \ldots, f_{n,k_n})) = \\
&= (f \circ (f_1, \ldots, f_n)) \circ (f_{1,1}, \ldots, f_{1,k_1}, \ldots, f_{n,1}, \ldots, f_{n,k_n})
\end{align*}
for all $f \in \underline{\mathcal{O}}(n)$, $f_i \in \underline{\mathcal{O}}(k_i)$ and $f_{i,j}$. 
\end{enumerate}
\end{defn}
For a non-symmetric operad $\underline{\mathcal{O}}$, we think of the elements of $\underline{\mathcal{O}}(n)$ as $n$-ary operations. An \emph{operad} is a non-symmetric operad that also possesses a 
right-action of the symmetric group $\Sigma_n$ on the set of operations of arity $n$ for each $n$, satisfying an equivariance condition, as described in the definition below. 

Following \cite{MSS02}, we use an underline to denote non-symmetric operads $\underline{\mathcal{O}}$ and remove the underline for (symmetric) operads $\mathcal{O}$.

\begin{defn} \label{operaddef2} 
An \emph{ operad} (or {\em symmetric operad}) $\mathcal{O}$ is a non-symmetric operad together with a right action of the symmetric group $\Sigma_n$ on each $\mathcal{O}(n)$ 
satisfying the following equivariance conditions for each $\sigma \in \Sigma_n$, $\tau_i \in \Sigma_{k_i}$, $f \in \mathcal{O}(n)$, and $f_i \in \mathcal{O}(k_i)$ for $1 \leq i \leq n$:
\begin{enumerate}[(A)]
\item $(f \cdot \sigma) \circ  (f_1, \ldots, f_n) = (f \circ (f_1, \ldots, f_n)) \cdot \sigma$;
\item $f \circ (f_1 \cdot \tau_1, \ldots , f_n \cdot \tau_n) = (f \circ (f_1, \ldots, f_n) \cdot (\tau_1, \ldots, \tau_n)$.
\end{enumerate}
Here the action of $\sigma$ on the right-half of (A) is defined as the action of the permutation $\widetilde{\sigma} \in \sigma_{k_1 + \cdots + k_n}$ that
 permutes consecutive blocks of length $k_1, \ldots, k_n$, respectively, according to the permutation $\sigma$.
\end{defn} 

We let $\sS(\sA)$ denote any  class of subsets of $\sA^{\NN}$ that is closed under all decimation and interleaving operations, combining $n$ sets in $\sS(\sA)$ in any
 order in any $n$-fold interleaving.
Examples of such classes include the collection $\sW(\sA)$ of all weakly shift-stable sets (Theorem \ref{thm:operation-closure}), the sub-collection $\overline{\sW}(\sA)$ of all closed
 weakly shift-stable sets (since the property of being  closed is
preserved under all decimation and interleaving operations), and the class $\sC(\sA)$ of path sets studied in \cite{AL14a}, which is shown to satisfy weak shift-stability in  \cite{ALS20}.

We first construct a non-symmetric operad $\underline{\mathcal{I}}$ such that each element of $\underline{\mathcal{I}}(n)$ is an $n$-ary operation acting on $\sS(\sA) \times \sS(\sA) \times \cdots \times \sS(\sA)$ ($n$ times). 
Although the non-symmetric operad $\underline{\mathcal{I}}$ will be built up from the $n$-fold interleaving operations, the 
resulting set $\underline{\mathcal{I}}(n)$ of operations at level $n$ will contain many more operations. 
For notational convenience, let $\pr_n$ denote the $n$-fold interleaving operation on $\mathcal{S}(\mathcal{A})$. We let $\underline{\mathcal{I}}(1) = \{\pr_1\}$, 
where of course $\pr_1 = id_{\mathcal{S}(\mathcal{A})}$ is the trivial ``$1$-fold interleaving''. Also let $\underline{\mathcal{I}}(2) = \{\pr_2\}$. However, 
it will not be sufficient for $\underline{\mathcal{I}}(3)$ to be a singleton set. Rather,
\[
\underline{\mathcal{I}}(3) = \{\pr_3, \pr_2 \circ (\pr_1, \pr_2), \pr_2 \circ (\pr_2, \pr_1)\},
\]
where, for instance,
\[
[\pr_2 \circ (\pr_1, \pr_2)](\sX_1, \sX_2, \sX_3) = \sX_1 \pr (\sX_2 \pr \sX_3)
\]
for general sets $\sX_1, \sX_2, \sX_3 \in \mathcal{S}(\mathcal{A})$. $\underline{\mathcal{I}}(n)$ for $n > 3$ is defined analogously,
 so as to satisfy the composition condition of Definition ~\ref{operaddef}. It is easy to see that $\pr_1$ serves as an identity for $\underline{\mathcal{I}}$ with respect to the various compositions, as in (b).
  Since the compositions of $\underline{\mathcal{I}}$ are genuine function composition, associativity in $\underline{\mathcal{I}}$ follows from the associativity of function composition. 
  Therefore, $\underline{\mathcal{I}}$ is a non-symmetric operad. We call $\underline{\mathcal{I}}$ the
   \emph{interleaving non-symmetric operad}, and
   refer to operations from $\underline{\mathcal{I}}$ as \emph{compound interleaving operations}. 

The non-symmetric operad $\underline{\mathcal{I}}$ can be upgraded to a symmetric operad by adding a right action of the symmetric
group permuting the interleaving factors. This requires adding additional $n$-ary operations for each $n$. 
In particular, for $\sigma \in \Sigma_n$ and an operation 
$f \in \mathcal{I}(n)$, we need to admit the operation $f \cdot \sigma$ where
 $(f \cdot \sigma)(\sX_1, \ldots, \sX_n) = f(\sX_{\sigma(1)}, \ldots, \sX_{\sigma(n)})$. 
Note that, like the interleaving operations themselves, this is also a function 
$\mathcal{S}(\mathcal{A}) \times \cdots \times \mathcal{S}(\mathcal{A}) \rightarrow \mathcal{S}(\mathcal{A})$, 
given by a (possibly compound) interleaving of some permutation of the input sets. 
Denote by $\mathcal{I}(n)$ the set of $n$-ary operations 
expanded to include the operations $f \cdot \sigma$ defined above, which permute the inputs prior to any (compound) interleaving. 
Note that we can think of an element $f \in \underline{\mathcal{I}}(n)$ as 
corresponding to $f \cdot \epsilon \in  \mathcal{I}(n)$, where $\epsilon \in \Sigma_n$ is the identity element. 
We can then extend the compositions for the $\underline{\mathcal{I}}(k)$ to
\[
\circ: \mathcal{I}(n) \times \mathcal{I}(k_1) \times \cdots \times \mathcal{I}(k_n) \rightarrow \mathcal{I}(k_1+ \cdots + k_n),
\]
by genuine function composition. 
 Then it is natural to define a right action of $\Sigma_n$ on $\mathcal{I}(n)$ by
  $(f \cdot \sigma) \cdot \tau = f \cdot (\sigma \tau)$ for $f \cdot \sigma \in \mathcal{I}(n)$ and $\tau \in \Sigma_n$.  
  Note that the equivariance conditions (A) and (B) of Definition ~\ref{operaddef2} apply generally to an action 
  permuting the inputs of genuine functions with respect to genuine composition. 
  Thus, since the $n$-ary operations in $\mathcal{I}(n)$ are genuine functions on sets and
   the compositions are function composition, these conditions hold. 
   We call the resulting (symmetric) operad the \emph{ interleaving symmetric operad} 
     and denote it by $\mathcal{I}$.


\begin{prop} 
\label{operadprop}
 Let $\mathcal{I}$ be the  interleaving symmetric operad acting on a collection of sets $\sS(\sA)$ closed under 
 all decimation and interleaving operations. 
 Then for any $f \in \mathcal{I}(n)$ and any sets $X_0, \ldots, X_{n-1} \in \mathcal{S}(\mathcal{A})$, 
 we have also $f(X_0, \ldots, X_{n-1}) \in \mathcal{S}(\mathcal{A})$. 
\end{prop}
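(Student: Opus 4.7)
The plan is to argue by structural induction on how the operation $f \in \mathcal{I}(n)$ is built up from the generators of the operad. By the very construction of $\mathcal{I}$ described in the paragraphs preceding the statement, every element of $\mathcal{I}(n)$ arises in one of three ways: (i) it is a basic interleaving $\pr_n$; (ii) it is obtained from some $g \in \underline{\mathcal{I}}(n)$ by a right-action permutation, i.e.\ $f = g \cdot \sigma$ with $\sigma \in \Sigma_n$; or (iii) it is obtained as a composition $f = g \circ (f_1, \ldots, f_k)$ with $g \in \mathcal{I}(k)$ and $f_i \in \mathcal{I}(k_i)$ such that $k_1 + \cdots + k_k = n$. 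Induction will be on the total number of such composition/permutation steps required to produce $f$ from basic interleavings.

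For the base case, when $f = \pr_n$, the value $f(X_0,\ldots,X_{n-1}) = X_0 \pr X_1 \pr \cdots \pr X_{n-1}$ lies in $\mathcal{S}(\mathcal{A})$ by the standing hypothesis that $\mathcal{S}(\mathcal{A})$ is closed under every $n$-fold interleaving operation. For the permutation step $f = g \cdot \sigma$, we have
\[
f(X_0,\ldots,X_{n-1}) = g(X_{\sigma(0)},\ldots,X_{\sigma(n-1)}),
\]
and this belongs to $\mathcal{S}(\mathcal{A})$ because the closure hypothesis on $\mathcal{S}(\mathcal{A})$ explicitly allows the input sets to be combined in any order. For the composition step $f = g \circ (f_1, \ldots, f_k)$, partition $(X_0, \ldots, X_{n-1})$ into consecutive blocks of sizes $k_1, \ldots, k_k$; by the inductive hypothesis applied to each $f_i$ (which uses strictly fewer generating steps than $f$), the set $Y_i := f_i(\text{block}_i)$ lies in $\mathcal{S}(\mathcal{A})$. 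Applying the inductive hypothesis a final time to $g$ on inputs $(Y_1, \ldots, Y_k)$ yields $f(X_0, \ldots, X_{n-1}) = g(Y_1, \ldots, Y_k) \in \mathcal{S}(\mathcal{A})$.

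There is no genuine obstacle here: the content of the proposition is essentially tautological once one unwinds the definition of $\mathcal{I}$ and uses that $\mathcal{S}(\mathcal{A})$ is, by assumption, closed under the primitive operations (all $n$-fold interleavings with inputs in arbitrary order) from which every element of $\mathcal{I}(n)$ is assembled. The only point worth being careful about is to make sure the induction is set up on a well-defined complexity measure, such as the number of primitive interleaving symbols $\pr_m$ appearing in a canonical expression for $f$; since $f$ is produced by finitely many operad operations from finitely many generators, this measure is a well-defined nonnegative integer, so the induction is valid.
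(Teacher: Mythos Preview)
Your proposal is correct and takes essentially the same approach as the paper: both arguments observe that every $f \in \mathcal{I}(n)$ is built from basic interleavings via composition and input-permutation, so closure of $\mathcal{S}(\mathcal{A})$ under the primitive operations forces closure under all of $\mathcal{I}$. The paper states this in two sentences without making the induction explicit, whereas you spell out the structural induction carefully; your version is a faithful, more detailed rendering of the same idea.
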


\begin{proof} Every $f \in \mathcal{I}(n)$ is just a composition of  interleavings of various $n$-arities, 
where possibly the input sets have their order permuted. Since $\sS(\mathcal{A})$ is closed under the 
interleaving operations, it  follows that it is closed under all composition operations from $\mathcal{I}$. 
\end{proof}

Generally, we recall below the notion of an algebra over an operad. We will see that the descriptions given above
for the nonsymmetric operad $\underline{\mathcal{I}}$ and the (symmetric) operad
 $\mathcal{I}$ were really given in terms of certain algebras over those operads. 
 This approach has helped to keep the exposition concretely rooted in the examples of interest, 
 but differs from the more typical, categorical exposition.

The following definition matches  \cite[Definition 1.20]{MSS02}, restricted to operads in the category of sets. 
For a set $X$, let $\mathcal{E}nd_X(n)$ denote the set 
of all functions $X^n \rightarrow X$, and let $\mathcal{E}nd_X = \bigcup_{n=1}^\infty \mathcal{E}nd_X(n)$. 
Then $\mathcal{E}nd_X$ has the structure of an operad, 
and is called the {\em Endomorphism Operad} (of sets), see \cite[Definition 1.7]{MSS02}.


%
%

\begin{defn} \label{operadalg} Let $\mathcal{O}$ be an operad in the category of sets, 
and let $X$ be a set. An {\em $\mathcal{O}$-algebra structure} on $X$ is a morphism of operads 
$\alpha_X: \mathcal{O} \rightarrow \mathcal{E}nd_X$, that is, a family of $\Sigma_n$-equivariant morphisms 
$\alpha_X(n): \mathcal{O}(n) \rightarrow \mathcal{E}nd_X(n)$, $n \geq 1$, compatible with 
the identity, composition, and equivariance structures of $\mathcal{O}$ and $\mathcal{E}nd_X$.
\end{defn}

If we omit the equivariance structure from the above definition, then we get the notion of 
an {\em algebra over a nonsymmetric operad}.



\begin{exmp} \label{exmp:operad_alg}   (Algebras over interleaving  nonsymmetric operad $\underline{\mathcal{I}}$) 
 
The sets $\mathcal{W}(\mathcal{A})$ of all weakly shift-stable sets on the fiinite alphabet $\mathcal{A}$,
 $\overline{\mathcal{W}}(\mathcal{A})$ of all closed weakly shift-stable sets on $\mathcal{A}$, and $\mathcal{C}(A)$ of 
 path sets on $\mathcal{A}$ are all algebras over the interleaving nonsymmetric operad 
 $\underline{\mathcal{I}}$. If the set $\mathcal{S}(\mathcal{A})$
  is any of these sets, and for any $n \in \mathbb{N}$, the maps $\alpha_{\mathcal{S}(\mathcal{A})}$ of 
  Definition ~\ref{operadalg} are built up from
\[
\alpha_{\mathcal{S}(\mathcal{A})}(n)(\prn)[(X_0,\ldots,X_{n-1})]  := (\prn)_{j=0}^{n-1} X_j = X_0 \pr X_1 \pr \cdots \pr X_{n-1}
\]
by function composition, where $(X_0,\ldots, X_{n-1}) \in \mathcal{S}(\mathcal{A})^n$. 
\end{exmp}


\end{document}